\title{Rigorous computer assisted application of KAM theory: \\ a modern approach}
\date{31st December 2015}
\author{
Jordi-Llu\'{\i}s Figueras\thanks{figueras@math.uu.se}\\
{\footnotesize Departament of Mathematics} \\
{\footnotesize Uppsala University} \\
{\footnotesize Box 480, 751 06 Uppsala (Sweden).}
\and
Alex Haro\thanks{alex@maia.ub.es}\\
{\footnotesize Departament de Matem\`atica Aplicada i An\`alisi} \\
{\footnotesize Universitat de Barcelona} \\
{\footnotesize Gran Via 585, 08007 Barcelona (Spain).}
\and
Alejandro Luque\thanks{luque@icmat.es}\\
{\footnotesize Instituto de Ciencias Matem\'aticas} \\
{\footnotesize Consejo Superior de Investigaciones Cient\'{\i}ficas} \\
{\footnotesize C/ Nicol\'as Cabrera 13-15, 28049 Madrid (Spain).}}
\begin{document}
\maketitle

\thispagestyle{empty}

\begin{abstract}
In this paper we present and illustrate a general methodology to apply KAM theory in particular
problems, based on an {\em a posteriori} approach.  We focus on the existence of real-analytic quasi-periodic Lagrangian
invariant tori for symplectic maps. The purpose is to verify the hypotheses of a KAM theorem in an a posteriori format: 
given a parameterization of an approximately invariant torus, we have to check non-resonance (Diophantine) conditions,
non-degeneracy conditions and certain inequalities to hold.
To check such inequalities we require to control the analytic norm of some
functions that depend on the map, the ambient structure and
the parameterization.  To this end, we propose an efficient computer assisted
methodology, using fast Fourier transform, having the same 
asymptotic cost of using the parameterization method for obtaining numerical approximations of invariant tori.
We illustrate our methodology by proving the existence of invariant curves
for the standard map (up to $\eps=0.9716$),
meandering curves for the non-twist standard map
and 2-dimensional tori for the Froeschl\'e map.
\end{abstract}

\noindent \emph{Mathematics Subject Classification}: 
37J40; 
65G20; 
65G40; 
65T50; 

\noindent \emph{Keywords}: a posteriori KAM theory; computer-assisted proofs;  R\"ussmann estimates; fast Fourier transform.

\newpage

\tableofcontents

\section{Introduction}\label{sec:intro}

KAM theory is concerned with the existence and stability of quasi-periodic
motions in different contexts of dynamical systems such as symplectic
maps, Hamiltonian systems, reversible systems or volume-preserving systems,
just to mention a few. The foundations of the theory started with the
celebrated works of A.N.  Kolmogo\-rov~\cite{Kolmogorov54}, V.I.
Arnold~\cite{Arnold63a}, and J.K. Moser~\cite{Moser62}, so that the acronym KAM
is used in their honor.  These pioneer papers sowed the seed of a subject of
remarkable importance in dynamical systems and, nowadays, KAM theory is a vast
area of research that involves a large collection of methods.  Actually, there
are many excellent surveys covering different points of view in the theory
(e.g.~\cite{Arnold63b, Bost86, BroerHS96, Llave01, Gallavotti83, Moser67}).

Classic KAM methods typically deal with a perturbative setting in
such a way that the problem is written as a perturbation of an
integrable system (in the sense that it has a continuous
family of invariant tori). They are based on the use of canonical transformations to
simplify the expression of the problem. To this end, one
takes advantage of the existence of action-angle-like coordinates for the unperturbed
system. This is a source of different
shortcomings and limitations in the study of particular problems,
mainly related to the fact that many systems are
non-perturbative. 
For example,
in some cases it is possible to identify an integrable approximation of a
given system but the remaining part cannot be considered as an
arbitrarily small perturbation.
Moreover, given a particular perturbative problem,
in general it is very complicated to establish
action-angle variables for the unperturbed system. Such
action-angle variables can be defined implicitly, become singular
or introduce problems of regularity.

In spite of the previous difficulties,
classic KAM methods have been successfully applied 
in several problems.
The interested reader is referred to Section 1.4 in~\cite{CellettiC07}
for a brief history and references of the application of
{\KAM} theory, and to~\cite{CellettiC97,Locatelli98,LocatelliG00}
for computer assisted proofs in problems of celestial mechanics.
A prominent example is the persistence
of the golden invariant  curve of the standard map
(c.f. Section~\ref{ssec:standard}
and notation therein).
From the numerical point of view, the persistence of this golden curve has been
considered for example in~\cite{Chirikov79,Greene79,MacKay93}
observing that the breakdown takes place around $\eps_c \simeq 0.97163540324$.
Upper bounds for $\eps_c$ were provided in~\cite{MacKayP85,Mather84} by Converse {\KAM} Theory.
A quite sharp non-existence result was reported in~\cite{Jungreis91}, proving
that the standard map has no rotational invariant circles for several parameter values including $\eps = 0.9718$.
KAM theory provides lower bounds for the critical value $\eps_c$.
This was already considered
by Herman~\cite{Herman86}, obtaining that the golden curve persists for $\eps \leq 0.029$.
Later, a computer assisted proof was given in~\cite{CellettiC88}
proving existence of the invariant curve for $\eps < 0.68$
using Lindstedt series.
This lower bound was improved in~\cite{LlaveR90,LlaveR91} extending the
result up to $\eps \leq 0.91$.

An alternative to the classic approach is the use of the parameterization
method.  Instead of performing canonical transformations, 
the strategy consists in solving the invariance equation directly
by correcting an approximately invariant object. Such correction
is obtained iteratively by 
considering the linearized equation around
the previous approximately invariant torus.
The parameterization method is suitable for studying
existence of invariant tori without using neither action-angle variables nor
being in a perturbative setting.  We point out that the geometry of the
problem plays an important role in the study of these equations. Such geometric
approach, also referred to as KAM theory \emph{without action-angle variables}
or \emph{a posteriori} KAM theory, was suggested by R. de la Llave
in~\cite{Llave01} (following long-time developed ideas, e.g.~\cite{CellettiC97,
JorbaLZ99, 
Moser66b,
Russmann76b,SalamonZ89,Zehnder76}) and a
complete proof was presented in \cite{GonzalezJLV05}.  This approach has been
later extended to other contexts, such as the study of
lower dimensional (isotropic) invariant tori that are
hyperbolic~\cite{FontichLS09} or elliptic~\cite{LuqueV11}, the case of non-twist
invariant tori in degenerate systems~\cite{GonzalezHL13} or, more recently,
dissipative systems~\cite{CallejaCLa,CanadellH15a}.
A remarkable advantage of
the parameterization method is that the steps of the proof allow us to obtain
very fast and efficient numerical methods for the approximation of
quasi-periodic invariant tori (e.g.~\cite{CallejaL10, 
FoxM14,
HuguetLS12}).  We refer the reader to the recent survey~\cite{HaroCFLM} for a
detailed discussion on the numerical implementation of the method and examples.

The goal of this
paper is to present and illustrate a general methodology to apply the KAM
theorem in specific problems. We focus on the
existence of analytic quasi-periodic Lagrangian invariant tori of symplectic
maps.  We resort to a revisited version of the \emph{a posteriori} KAM result
presented in~\cite{GonzalezJLV05}. As usual in KAM theory, the main hypotheses
consist in checking non-resonance (Diophantine) conditions, non-degeneracy
conditions and also asking certain inequalities to hold.  To check such
inequalities we require to control the analytic norm of some functions that
depend on the known objects (the map, the ambient structure and the initial
parameterization).  To this end, we propose a rigorous computer assisted methodology
based on the use of fast Fourier transform. 
An important consequence of our methodology is that the application of the KAM
theorem is performed in a very fast way.  Indeed, with the same asymptotic cost
of using the parameterization method to obtain numerical approximations of
invariant tori.

It is worth mentioning that computer assisted analysis has played an important
role to achieve remarkable results in the literature.  Among them, we
highlight: 
the proof of the Feigenbaum conjecture \cite{KochSW96,Lanford82};
Rigorous interval methods in quantum mechanics \cite{FeffermanS96};
the proof that the Lorenz attractor exists \cite{Tucker99}; 
the proof of the double bubbling conjecture \cite{HassS00},
and the existence of singular solutions in fluid dynamics \cite{CastroCFGG13}.
Computer assisted methods in analysis
rely on the fact that one can define a rigorous interval arithmetic on 
computers. These intervals have computer representable floating point
numbers as end-points and all basic operations as addition, subtraction, 
multiplication, division and composition of
standard functions
(e.g $\exp$, $\cos$, $\sin$, $\log$) satisfy the \emph{isotonicity inclusion principle} (the image of any two nested intervals is nested) and
the \emph{range enclosure principle} (the range of any function is enclosed with the image of the domain under the action of the natural interval
extension). We refer the reader to \cite{Tucker11} for more details.

Finally, we describe the organization of the paper and
we briefly summarize the content of each section:
\begin{itemize}
\item 
In Section~\ref{sec:theKAMteo} we introduce some elementary geometric objects
(Section~\ref{ssec:geom}), set up notation and norms
(Section~\ref{sec-anal-prelims}), and present a detailed statement of the KAM
theorem (Section~\ref{ssec:teoKAMestate}). Then we present a full proof of this
result (Section~\ref{ssec:proof:KAM}).  This is necessary in order to link
the different expressions that appear in the constants of the theorem with
their corresponding geometric object or equation. We give explicit and
sharp estimates for all constants quantifying the hypotheses
of the theorem.
\item 
In Section~\ref{sec:DFT} we control the difference between an analytic function
$f$ on the torus and its discrete Fourier approximation $\tilde f$, and we
present several technical results that allow us to control (with explicit
constants) the analytic norm of $\tilde f-f$. 
More concretely,
if $f$ is an analytic and bounded function on the complex strip
of size $\hat \rho>0$, then
\[
\snorm{\tilde f-f}_\rho \leq C_{\NF}(\rho,\hat \rho) \snorm{f}_{\hat \rho},
\]
for every $0\leq \rho < \hat \rho$,
where $C_{\NF}(\rho,\hat \rho)$ is an explicit constant that depends also
on the dimension and the number of Fourier coefficients.
This result is motivated by the previous
work~\cite{Epstein04} and related ideas have been used in~\cite{Minton13,SchenkelWW00}.
Our aim is that this section can be read independently, in spite of
some
notation introduced in Section~\ref{sec-anal-prelims} regarding Fourier series and norms.
\item  
In Section~\ref{sec:CAPlemmas} we consider additional technical results that
allow us to apply the KAM theorem in 
an effective and efficient way.
On the one hand, we present an approach to obtain a positive measure
set of Diophantine vectors close to a given vector, possibly obtained
numerically
(Section~\ref{ssec:diophantine}).  On the other hand, we present an
improvement of the classic R\"ussmann estimates
(Section~\ref{ssec:superrussmann}).
To take into account the effect of small divisors, we compute
the first elements explicitly and
then we control the remaining tail analytically.

\item 
In Section~\ref{sec:validation:algorithm} we present the main
methodology to apply the KAM result: the validation algorithm.
The validation
procedure is performed on a sampling of an approximately invariant
torus obtained numerically.
We also require a finite
amount of input data
characterizing
the geometric information of the problem. Suitable values 
for these parameters can be obtained following an heuristic method
explained in Appendix~\ref{app:optimization}.

\item
In Section~\ref{sec:examples},
we apply the validation algorithm to several examples, thus highlighting different features of our approach.

In order to illustrate the reliability of the method, we consider the standard map (Section~\ref{ssec:standard}) and 
prove that the golden invariant curve exists up to $\eps=0.9716$, thus establishing 
a new lower bound to the so-called Greene critical 
value~\cite{Greene79} $\eps_c\simeq 0.97163540324$. 
Moreover, we have also proved the existence of other rotational invariant curves with different rotation numbers.

An important feature of the method is that it can be applied to invariant curves that are not graphs over the angular coordinate. 
We consider the non-twist standard map and prove the existence of so-called meandering invariant curves
(Section~\ref{ssec:nontwist:standard}). 

We finally consider a higher dimensional example. We prove the existence
of $2$-dimensional invariant tori for the Froeschl\'e map, a $4$-dimensional symplectic map consisting in two coupled standard maps
(Section~\ref{ssec:froeschle}). 

\end{itemize}

\section{A KAM theorem for Lagrangian invariant tori of exact symplectic maps}
\label{sec:theKAMteo}

In this section we present an \emph{a posteriori} {\KAM} theorem for Lagrangian
invariant tori of exact symplectic maps. 
The result was first
obtained in~\cite{GonzalezJLV05} and it is a version of Kolmorogov
theorem~\cite{Kolmogorov54} using neither action-angle coordinates nor a
perturbative setting. The specific statement given here, with explicit and
sharp estimates, is a slightly modified version of the KAM Theorem discussed in
Chapter 4 of~\cite{HaroCFLM}.  Roughly speaking, the \emph{a posteriori}
result reads as follows: \emph{if we have a good enough approximation of an
invariant torus with frequency $\omega$, then, under certain non-degeneracy and
non-resonance conditions, there exists a true invariant torus nearby.}

After setting the problem and geometrical background in
Section~\ref{ssec:geom}, we introduce some basic notation regarding Banach
spaces, norms and cohomological equations in Section~\ref{sec-anal-prelims}.
In Section~\ref{ssec:teoKAMestate} we present the statement of a KAM theorem
for existence (and persistence) of Lagrangian invariant torus having
Diophantine frequencies.  In the proof of the main result, given in
Section~\ref{ssec:proof:KAM}, we pay special attention to compute explicitly
all constants appearing during the
process and to obtain optimal and sharp estimates.

\subsection{Geometric setting and invariant tori}\label{ssec:geom}

We denote $\TT^n=\RR^n/\ZZ^n$ the $n$-dimensional torus 
with covering space.
The ambient manifold is a $2n$-dimensional annulus $\A
\subset \TT^n \times \RR^{n}$ with covering space
$\tilde \A \subset \RR^{2n}$. The coordinates on $\A$ are denoted by
$z=(z_1,\ldots,z_{2n})=(x,y)$, with $x=(x_1,\ldots,x_n)$ and
$y=(y_1,\ldots,y_{n})$.  A function $u:\RR^n\to \RR$ is $1$-periodic if
$u(\theta+e)= u(\theta)$ for all $\theta\in\RR^n$ and $e\in\ZZ^n$.  
We abuse notation and denote it as $u:\TT^n\to \RR$.
Similarly, a function $g:\cA\to \RR$
is $1$-periodic in $x$ if $g(x+e,y)= g(x,y)$ for all $x\in\RR^n$ and
$e\in\ZZ^n$.
We abuse notation and denote it as $g:\A\to\RR$.

In the following we assume that $\A$ is endowed with an exact symplectic form
$\sform={\rm d} \aform$ for a certain $1$-form $\aform$.  For any point
$z\in \A$, we write the matrix representation of the 1-form $\aform_z$ and the
2-form $\sform_z$ as 
\begin{equation}\label{def-Omega}
a(z)=(a_1(z)~\ldots~ a_{2n}(z))^\top, \qquad \text{and}\qquad \Omega(z)= \Dif
a(z)^\top  - \Dif a(z),
\end{equation}
respectively. 
Notice that $\det\Omega(z)\neq 0$.

\begin{remark}\label{rem:standardforms}
The prototype example of symplectic structure is 
the \emph{standard symplectic structure} on $\TT^n\times \RR^{n}$: $\sform_0=
\sum_{i= 1}^n  {\rm d} z_{n+i}\wedge {\rm d}z_i$.  An action form for
$\sform_0$ is ${\aform_0}= \sum_{i= 1}^n z_{n+i}\ {\rm d} z_i$.  The matrix
representations of $\aform_0$ and $\sform_0$ are, respectively,
\[
a_0(z)=  \begin{pmatrix} O_n & I_n \\ O_n & O_n \end{pmatrix}z, \qquad
\Omega_0= \begin{pmatrix} O_n & -I_n \\ I_n & O_n \end{pmatrix}.
\]
\end{remark}

A map $F:\A\rightarrow\A$ is \emph{symplectic} if $F^*\sform = \sform$.  A
symplectic map $F:\A\rightarrow\A$ is \emph{exact} if there is a smooth
function $S:\A\rightarrow\RR$, called \emph{primitive function of $F$}, such
that  $F^*\aform-\aform = {\rm d} S$. In coordinates, the symplectic and the
exact symplectic properties of a map $F$ are equivalent to
\begin{equation}\label{symplectic_condition}
   \Dif F(z)^\top\ \Omega(F(z)) \ \Dif F(z) = \Omega(z), \quad
   \forall z\in\A ,
   \end{equation}
and
   \begin{equation}\label{eq:exact}
   \Dif  S(z) = a(F(z))^\top \Dif F(z) - a(z)^\top , \quad
   \forall z\in\A ,
\end{equation}
respectively. A map $F:\A\rightarrow\A$ is \emph{homotopic to the
identity} if $F(x,y)-(x,0)$ is $1$-periodic in $x$.

Given an embedding $K:\TT^n\to \A$, 
called parameterization from now on,
we say that
$K(\TT^n)$ is an \emph{$F$-invariant torus} with frequency $\omega \in \RR^n$ if
\begin{equation}\label{eq:inv:rot}
F (K(\theta)) = K (\theta+\omega).
\end{equation}
For convenience, we denote $\Rot (\theta)=\theta + \omega$ the rigid rotation
of frequency $\omega$.
In case that
$\omega$ is rationally independent (i.e., $k \cdot \omega \notin \ZZ$ for all
$k \in \ZZ^d\backslash\{0\}$) then the rotation $\Shift{\omega}$ is 
\emph{ergodic} and the invariant torus $K(\TT^n)$ is 
\emph{quasi-periodic}. Finally, the parameterization $K:\TT^n\to \A$ is
\emph{homotopic to the zero section} if $K(\theta)-(\theta,0)$ is $1$-periodic
in $\theta$.

\begin{remark}\label{rem:topology}
If $K$ is homotopic to the zero section, then $K(\TT^n)$ is called primary
tori. In the classic KAM perturbative setting, these objects correspond to
continuation of the planar tori that are present in the unperturbed problem.
The methodology presented in this paper can be adapted to deal with invariant
tori having other relative homotopies in a straightforward way.
\end{remark}

By taking derivatives at both sides of Equation~\eqref{eq:inv:rot}
we observe that the tangent bundle is invariant. Indeed, 
\begin{equation}\label{eq:inv:rot:der}
\Dif F(K(\theta)) \Dif K(\theta)= \Dif K(\theta+\omega).
\end{equation}
Given a parameterization $K$ as described above, we consider the
pullback $K^*\sform$.
Its matrix representation at a point $K(\theta)$ is
\begin{equation} 
\label{def-OK}
\sub{\Omega}{K}(\theta) =  \Dif K(\theta)^\top\
\Omega(K(\theta)) \ \Dif K(\theta).
\end{equation}
It is well known (c.f.~\cite{Moser66c}) that if $K(\TT^n)$ is 
a quasi-periodic $F$-invariant torus, then $\sub{\Omega}{K}(\theta)=O_n$ for every $\theta \in \TT^n$.
In combination with Equation~\eqref{eq:inv:rot:der}, this means that we have a Lagrangian invariant subbundle.

Roughly speaking, the parameterization method for proving existence of
quasi-periodic $F$-invariant tori consists in
studying the linearized invariance equation around an approximate solution.
The invariance of $\Dif K(\theta)$ in Equation~\eqref{eq:inv:rot:der} suggests that these vectors will help us to
obtain a suitable frame to write the map $\Dif F$.  The fact that every
Lagrangian subspace has a Lagrangian complementary is the starting point of the
general construction discussed in~\cite{HaroCFLM} which is followed in this
paper (for previous constructions we refer
to~\cite{GonzalezJLV05,FontichLS09,LuqueV11,GonzalezHL13}). This construction
goes as follows: given a map $N_0: \TT^n \rightarrow \RR^{2n \times n}$ such
that 
\begin{equation}\label{eq:invLKONK}
\det (\Dif K(\theta)^\top \Omega(K(\theta)) N_0(\theta)) \neq 0
\end{equation}
it turns out that the
\emph{frame map}
$P: \TT^n \rightarrow \RR^{2n \times 2n}$, given by
\begin{equation}\label{eq:def-P}
P(\theta)=
\begin{pmatrix}
\Dif K(\theta) & N(\theta)
\end{pmatrix},
\end{equation}
with
\begin{align}
N(\theta)={} & \Dif K(\theta) A(\theta)+N_0(\theta) B(\theta), \label{eq:def-N}
\\ 
B(\theta)={} & -(\Dif K(\theta)^\top \Omega(K(\theta)) N_0(\theta))^{-1},
\text{and}
\label{eq:def-B} \\ 
A(\theta)={} & -\frac{1}{2} (B(\theta)^\top
N_0(\theta)^\top \Omega(K(\theta)) N_0(\theta) B(\theta)), \label{eq:def-A}
\end{align}
is a symplectic frame ($N$ is the Lagrangian complement of $\Dif
K$). 
Since the dynamics on the torus is ergodic, 
it follows that the symplectic frame in Equation~\eqref{eq:def-P} reduces the linearized dynamics
$\Dif F\comp K$ to a  block-triangular matrix
\begin{equation}\label{reducibility}
 P(\theta+\omega)^{-1} \Dif F(K(\theta)) P(\theta)
 = \Lambda(\theta), \qquad \Lambda(\theta)
 = \begin{pmatrix}
 I_n &  T(\theta) \\ O_n  & I_n
 \end{pmatrix},
\end{equation}
where the \emph{torsion matrix} $T: \TT^n \to \RR^{n\times n}$ is
given by
\begin{equation}\label{def-T}
T(\theta) = N(\theta+\omega)^\top \
\Omega(K(\theta+\omega)) \ 
\Dif F(K(\theta)) \ N(\theta).
\end{equation}

\begin{remark}
Of course, if we endow the annulus with 
additional structure (e.g. a Riemannian metric) we can obtain $N_0$ in a
natural way according to this structure. A summary of different approaches used
in the literature can be found in Chapter 4 of~\cite{HaroCFLM}.
\end{remark}

\subsection{Analytic functions and norms}\label{sec-anal-prelims}

In this paper we work with Banach spaces of real analytic functions in
complex neighborhoods of real domains. A complex strip of  $\TT^n$ of width
$\rho>0$ is defined as
\[
\TT^{n}_{\rho}= 
\left\{\theta\in \CC^n / \ZZ^n \ : \ \norm{\im{\,\theta_{i}}} < \rho, 
\,i=1,\dots, n\right\}.
\]
A function defined on $\TT^n$ is \emph{real
analytic} if it can be analytically extended to a complex strip $\TT_{\rho}^n$.

We consider analytic functions $u:\TT^n_\rho\to\CC$ such that they can be
continuously extended up to the boundary of $\TT^n_{\rho}$.  We endow these
functions with the norm
\begin{equation}\label{eq:norm1}
\snorm{u}_{\rho}  = 
\sup_{\theta \in \TT^n_{\rho}}  \norm{u(\theta)}.
\end{equation}
Moreover, we write the Fourier expansion
\[
u(\theta)=\sum_{k \in \ZZ^n} u_k \ee^{2\pi \ii k \cdot \theta}, \qquad u_k =
\int_{[0,1]^n} u(\theta) \ee^{-2 \pi \ii k \cdot \theta} \dif \theta,
\]
and we denote the average of $u$ as $\avg{u}= u_0 = \int_{[0,1]^n} u(\theta)
\dif \theta$. 
Then, we consider the Fourier norm
\begin{equation}\label{eq:normF}
\snorm{u}_{F,\rho}=\sum_{k \in \ZZ^n} | u_k| \ee^{2\pi |k|_1 \rho},
\end{equation}
where $|k|_1 = \sum_{i= 1}^n |k_i|$.  We observe that $\snorm{u}_{\rho} \leq
\snorm{u}_{F,\rho}$ for every $\rho>0$.  

A complex strip of $\A$ is a complex connected open neighborhood $\B \subset
(\CC^n / \ZZ^n) \times \CC^n$ of $\A$ that projects surjectively on
$\TT^n$.  A function defined on $\A$ is \emph{real
analytic} if it can be analytically extended to a complex strip 
$\B$.
Given an analytic function
$u: \B \rightarrow \CC$ we introduce the norm
\begin{equation}\label{eq:normB}
\snorm{u}_{\B}  = 
\sup_{z\in\B} \,
\norm{u(z)}.
\end{equation}

The previous definitions extend naturally to matrices. If $A$ is an $n_1 \times
n_2$ matrix of analytic functions on $\TT_{\rho}^n$ (resp. on $\B$), we extend
the norms in Equations~\eqref{eq:norm1} and~\eqref{eq:normF} (resp.
Equation~\eqref{eq:normB}) as follows
\begin{equation}\label{eq:norm2}
\snorm{A}_{\rho} = \max_{i=1,\ldots,n_1} \sum_{j=1}^{n_2}
\snorm{A_{i,j}}_{\rho}, \qquad
\snorm{A}_{F,\rho} = \max_{i=1,\ldots,n_1} \sum_{j=1}^{n_2}
\snorm{A_{i,j}}_{F,\rho},
\qquad
\mbox{(resp. $\snorm{A}_{\B}$)}.
\end{equation}

Notice that, if $F :\A \rightarrow \A$, $\Omega$ is the matrix representation
of $\sform$ and $a$ is the matrix representation of $\aform$, then
\[
\snorm{\Dif F}_{\B} = \max_{i=1,\ldots, 2n} \sum_{j=1}^{2n}
\Snorm{\frac{\partial F_i}{\partial z_j}}_{\B}, \qquad \snorm{\Dif^2 F}_{\B} 
   = \max_{i=1,\ldots, 2n} \sum_{j,k=1}^{2n}
\Snorm{\frac{\partial F_i}{\partial z_j\partial z_k}}_{\B},
\]
\[
\snorm{\Omega}_{\B} = \max_{i=1,\ldots, 2n} \sum_{j=1}^{2n}
\Snorm{\Omega_{i,j}}_{\B}, \qquad
\snorm{\Dif \Omega}_{\B} = \max_{i=1,\ldots, 2n} \sum_{j,k=1}^{2n}
\Snorm{\frac{\partial \Omega_{i,j}}{\partial z_k}}_{\B},
\]
\[
\snorm{\Dif a}_{\B} = \max_{i=1,\ldots, 2n} \sum_{j=1}^{2n}
\Snorm{\frac{\partial a_i}{\partial z_j}}_{\B}, \qquad
\snorm{\Dif^2 a}_{\B} = \max_{i=1,\ldots, 2n} \sum_{j,k=1}^{2n}
\Snorm{\frac{\partial^2 a_i}{\partial z_j\partial z_k}}_{\B},
\]

Finally, we introduce some useful notation regarding the so-called
\emph{cohomological equations} 
that play an important role in {\KAM} theory.
Given $\omega\in\RR^n$, we define the cohomology operator $\L$
on functions $u:\TT^n\to \RR$ as follows:
\begin{equation}\label{eq:calL}
\L\, u = u  - u\comp \Shift{\omega}.
\end{equation}
Then, the core of {\KAM} theory is the
cohomological equation
\begin{equation}\label{eq:standard}
\L u = v -\avg{v}, 
\end{equation}
for a given periodic function $v$.

Let us assume that $v$ is a continuous function and $\Shift{\omega}$ is ergodic.
If there exists a continuous zero-average solution of Equation~\eqref{eq:standard},
then it is unique and will be denoted by
$u = \R v$.
Note that the formal solution of Equation~\eqref{eq:standard} is
immediate. Actually, if $v$ has the Fourier
expansion $v(\theta)=\sum_{k \in \ZZ^n} \hat v_k \mathrm{e}^{2\pi
\mathrm{i} k \cdot \theta }$ and the dynamics is ergodic, then
\begin{equation}\label{eq:small:formal}
\R v(\theta) = \sum_{k \in \ZZ^n \backslash \{0\} } \hat u_k \mathrm{e}^{2\pi
\mathrm{i} k \cdot  \theta}, \qquad \hat u_k = \frac{\hat
v_k}{1-\mathrm{e}^{2\pi
\mathrm{i}  k \cdot \omega}}.
\end{equation}
In particular, this implies that $\R v =0$ if $v=0$.
The solutions of Equation~\eqref{eq:standard} differ by a constant
(the average). 

We point out that ergodicity is not enough to ensure regularity of the
solutions of cohomological equations.  This is related to the effect of the
small divisors $1-\mathrm{e}^{2\pi \mathrm{i}  k \cdot \omega}$ in
Equation~\eqref{eq:small:formal}.  
To deal with regularity, we
require stronger non-resonant conditions on the vector of frequencies. 
In this
paper, we consider the following classic condition:
\begin{definition}\label{def-Diophantine}
Given $\gamma>0$ and $\tau\geq n$, we say that $\omega\in\RR^n$ is a
$(\gamma, \tau)$-Diophantine vector of frequencies
if
\begin{equation}\label{eq:Diophantine}
\norm{k \cdot \omega-m}\ge \,\gamma\, |k|_1^{-\, \tau}, 
\qquad  \forall k\in\ZZ^n\backslash\{0\}, \, m\in\ZZ , 
\end{equation}
where $|k|_1 = \sum_{i= 1}^n |k_i|$.
\end{definition}

Finally, we recall the so-called R\"ussmann estimates to control the regularity
of the solutions of Equation~\eqref{eq:standard} (we refer the reader
to~\cite{Russmann75}).  If $v:\TT^n\to\RR$ is analytic, with $\snorm{v}_\rho<
\infty$ and $\omega$ satisfies~\eqref{eq:Diophantine}, then
\begin{equation}\label{eq:russ:first}
\snorm{\R v}_{\rho-\delta} \leq \tfrac{c_R}{\gamma \delta^\tau}\snorm{v}_\rho
\end{equation}
for
$0<\delta\leq \rho$.  In Lemma~\ref{lem-Russmann} we present 
an improvement
of the classic R\"ussmann constant $c_R$ with the help
of the computer.

The above definitions for $\L$ and $\R$ extend component-wise to
vector and matrix-valued functions. These extensions also
satisfy the R\"ussmann estimates. 

\subsection{Statement of the KAM theorem}\label{ssec:teoKAMestate}

At this point, we are ready to state sufficient conditions to guarantee the
existence of an $F$-invariant torus with fixed frequency close to an
approximately $F$-invariant torus. Theorems of this type are often
called \emph{a posteriori} results.
Notice that the hypotheses in Theorem \ref{theo:KAM} are tailored to be verified
with a finite amount of computations.

\begin{theorem}\label{theo:KAM}
Let us consider an exact symplectic structure $\sform = {\rm d} \aform$ on the
$n$-dimensional annulus $\A$, an exact symplectic map $F: \A \rightarrow \A$
homotopic to the identity and a frequency vector $\omega \in \RR^n$. Let us
assume that the following hypotheses hold:
\begin{itemize}
\item [$H_1$] The map $F$, the 1-form $\aform$ and the 2-form $\sform$ are real
analytic and can be analytically extended to
some complex strip $\B$ and continuously up to the boundary.  Moreover, there
are constants $\sub{c}{\Dif F}, \sub{c}{\Dif^2 F}, \sub{c}{\Omega},
\sub{c}{\Dif \Omega}, \sub{c}{\Dif a}$ and $\sub{c}{\Dif^2 a}$ such that
$\snorm{\Dif F}_{\B} \leq \sub{c}{\Dif F}$, $\snorm{\Dif^2 F}_{\B} \leq
\sub{c}{\Dif^2 F}$, $\snorm{\Omega}_{\B} \leq \sub{c}{\Omega}$, $\snorm{\Dif
\Omega}_{\B} \leq \sub{c}{\Dif \Omega}$, $\snorm{\Dif a}_{\B} \leq \sub{c}{\Dif
a}$, and $\snorm{\Dif^2 a}_{\B} \leq \sub{c}{\Dif^2 a}$.

\item[$H_2$]
There exists $K : \TT^n \rightarrow \A$, homotopic to the zero section, that
can be analytically extended to $\TT_\rho^n$ with $\rho>0$, and continuously up
to the boundary, with $K(\TT^n_\rho)\subset \B$. Moreover, there exist constants $\sub{\sigma}{\Dif K}$ and
$\sub{\sigma}{\Dif K^\top}$ such that
\[
\snorm{\Dif K}_\rho < \sub{\sigma}{\Dif K}, 
\qquad 
\snorm{\Dif K^\top}_\rho <
\sub{\sigma}{\Dif K^\top},
\qquad
\mathrm{dist}(K(\TT_\rho^n),\partial \B) > 0.
\]
Given two subsets $X,Y\in \CC^{2n}$, $\mathrm{dist}(X,Y)$ is defined as
$\inf\{|x-y|\,: ~x\in X,~y\in Y\}$,
where $|\cdot|$ is the maximum norm.

\item [$H_3$] There exists a map $N_0: \TT^n \rightarrow \RR^{2n \times n}$
that is real analytic and can be analytically extended to $\TT_\rho^n$, and
continuously up to the boundary. Moreover, there exist constants
$\sub{c}{N_0}$, $\sub{c}{N_0^\top}$, $\sub{c}{N_0^\top (\Omega \circ K) N_0}$,
and $\sub{\sigma}{B}$ such that
\[
\snorm{N_0}_{\rho} \leq \sub{c}{N_0}, \qquad
\snorm{N_0^\top}_{\rho} \leq \sub{c}{N_0^\top}, \qquad
\snorm{N_0^\top (\Omega \circ K) N_0}_\rho \leq 
\sub{c}{N_0^\top (\Omega \circ K) N_0}, \qquad
\snorm{B}_\rho < \sub{\sigma}{B},
\]
where $B(\theta) = -(\Dif K (\theta)^\top \Omega(K(\theta))
N_0(\theta))^{-1}$. 

\item [$H_4$] There exists $\sub{\sigma}{T}$ such that the matrix-valued map
\[
 T(\theta) = N(\theta+\omega)^\top \
 \Omega(K(\theta+\omega)) \ 
 \Dif F(K(\theta)) \ N(\theta)
\]
satisfies $|\avg{T}^{-1}| < \sub{\sigma}{T}$, where
\[
N(\theta)= \Dif K(\theta)
A(\theta)+N_0(\theta)B(\theta),
\]
with $A(\theta)= -\frac{1}{2} (B(\theta)^\top N_0(\theta)^\top
\Omega(K(\theta)) N_0(\theta) B(\theta))$.

\item [$H_5$] The frequency vector $\omega$ is $(\gamma,\tau)$-Diophantine
for certain $\gamma>0$ and $\tau\geq n$.
\end{itemize}

Under the above hypotheses, for each $0<\rho_\infty < \rho$ there exists a
constant $\mathfrak{C}_1$ (see Remark~\ref{rem:constants}) such that, if 
the following condition holds
\begin{equation}\label{eq:hyp:small0}
\frac{\mathfrak{C}_1 \snorm{E}_{\rho}}{\gamma^4 \rho^{4\tau}} < 1,
\end{equation}
where $E(\theta)=F(K(\theta))-K(\theta+\omega)$, then there exists an
$F$-invariant torus $K_{\infty} (\TT^n)$ with frequency
$\omega$. The map $ K_{\infty} $ is an embedding, homotopic to the zero
section, analytic in $\TT^n_{\rho_\infty}$, and satisfies
\[
\snorm{\Dif K_{\infty}}_{\rho_\infty} < \sub{\sigma}{\Dif K}, \qquad
\snorm{\Dif K^\top_{\infty}}_{\rho_\infty} < \sub{\sigma}{\Dif K^\top}, \qquad
\mathrm{dist}(K_{\infty}(\TT_{\rho_\infty}^n),\partial \B) >
0.
\]
Furthermore, the map $K_\infty$ is close to $K$: there exists a constant
$\mathfrak{C}_2$ (see Remark~\ref{rem:constants}) such that
\begin{equation}\label{eq:teo:close}
\snorm{K_{\infty}-K}_{\rho_\infty} <
\frac{\mathfrak{C}_2 \snorm{E}_\rho}{\gamma^2 \rho^{2\tau}} .
\end{equation}
\end{theorem}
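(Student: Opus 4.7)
The plan is to prove Theorem~\ref{theo:KAM} by a quadratically convergent KAM iteration based on the parameterization method from Section~\ref{ssec:geom}. Starting from $K_0 = K$ with error $E_0 = E$ satisfying the smallness hypothesis \eqref{eq:hyp:small0}, I construct a sequence $K_{k+1} = K_k + \Delta K_k$ and show that the residual error $E_k = F\comp K_k - K_k\comp\Shift{\omega}$ decays super-exponentially at the cost of a controlled loss of analyticity at each step. The analyticity strip shrinks through a geometric sequence, $\rho_{k+1} = \rho_k - \delta_k$ with $\delta_k = (\rho - \rho_\infty)/2^{k+1}$, so that $\rho_k \to \rho_\infty$.

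At each step I solve the linearized invariance equation
\[
\Dif F(K_k(\theta))\,\Delta K_k(\theta) - \Delta K_k(\theta + \omega) = -E_k(\theta)
\]
by writing $\Delta K_k(\theta) = P_k(\theta)\xi_k(\theta)$, where $P_k$ is the symplectic frame associated with $K_k$ and $N_0$ through \eqref{eq:def-P}--\eqref{eq:def-A}. Since $K_k$ is only approximately invariant, the conjugation $P_k(\theta+\omega)^{-1}\Dif F(K_k(\theta))P_k(\theta)$ equals the block-triangular matrix $\Lambda_k(\theta)$ of \eqref{reducibility} only up to a defect that is linear in $E_k$; this defect is moved to the quadratic remainder of the Newton scheme, so the equation actually solved is
\[
\Lambda_k(\theta)\xi_k(\theta) - \xi_k(\theta+\omega) = -P_k(\theta+\omega)^{-1}E_k(\theta).
\]
Splitting $\xi_k = (\xi_k^L, \xi_k^N)^\top$ and exploiting the triangular structure of $\Lambda_k$ yields, in this order, two cohomological equations of type \eqref{eq:standard} for $\xi_k^N$ and $\xi_k^L$, both solvable by the R\"ussmann estimate \eqref{eq:russ:first} once the averages are handled. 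The average of the right-hand side of the $\xi_k^N$-equation is not a priori zero, but the exact symplectic identities \eqref{symplectic_condition}--\eqref{eq:exact} force the Lagrangian defect $K_k^*\sform$ to be of order $\snorm{E_k}$, which in turn makes this obstruction quadratic in $E_k$; the remaining obstruction for $\xi_k^L$ is then killed by fixing the free average of $\xi_k^N$ using the non-degeneracy hypothesis $H_4$ on $\avg{T}$.

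Combining Taylor's theorem for $F$ around $K_k$ with the two R\"ussmann losses produces an estimate of the form
\[
\snorm{E_{k+1}}_{\rho_{k+1}}\ \leq\ \frac{C\,\snorm{E_k}_{\rho_k}^{\,2}}{\gamma^4 \delta_k^{\,4\tau}},
\]
with $C$ an explicit function of the hypothesis constants; the fourth power of $\gamma\delta_k^{\tau}$ reflects that each of the two cohomological equations contributes a factor $\gamma^{-1}\delta_k^{-\tau}$ and one additional factor appears when differentiating to bound the Taylor remainder. Under \eqref{eq:hyp:small0} the scheme converges super-exponentially, and \eqref{eq:teo:close} follows from $\snorm{\Delta K_k}_{\rho_{k+1}} \lesssim \snorm{E_k}_{\rho_k}/(\gamma^2\delta_k^{2\tau})$ by geometric summation. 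The main obstacle is closing the induction: at every level I have to verify that $K_{k+1}$, together with the updated objects $N_{k+1}, B_{k+1}, T_{k+1}$, still satisfies $H_2$--$H_4$ with the same (or only slightly enlarged) constants $\sub{\sigma}{\Dif K}, \sub{\sigma}{\Dif K^\top}, \sub{\sigma}{B}, \sub{\sigma}{T}$ and that $K_{k+1}(\TT^n_{\rho_{k+1}})$ stays at positive distance from $\partial\B$, so that the next frame construction and the evaluation of $F$ at the new iterate are well defined. Propagating these geometric bounds explicitly and tracking the constants inside $\mathfrak{C}_1,\mathfrak{C}_2$ sharply enough for the computer-assisted applications of Section~\ref{sec:examples} is the principal technical burden of the proof.
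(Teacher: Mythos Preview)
Your proposal is correct and follows essentially the same route as the paper: a quasi-Newton iteration in which the correction is expressed in the approximately symplectic frame $P_k$, the linearized equation is reduced to a pair of block-triangular cohomological equations solved via R\"ussmann estimates, the $\xi^N$-average obstruction is shown to be quadratic in $E_k$ through the exact symplectic identity (this is the content of the paper's Lemma~\ref{lem-quad-avg}), the $\xi^L$-average obstruction is removed by the twist condition $H_4$, and the geometric bounds on $\Dif K_k, B_k, \avg{T_k}^{-1}$ and $\mathrm{dist}(K_k(\TT^n_{\rho_k}),\partial\B)$ are propagated inductively. The only bookkeeping difference is that the paper records three separate domain losses $\delta_s$ per step (so $\rho_{s+1}=\rho_s-3\delta_s$) with a general geometric ratio $a_1>1$ rather than your $1/2$, and keeps explicit track of every constant so that $\mathfrak C_1,\mathfrak C_2$ are sharp enough for the computer-assisted verifications.
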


\begin{remark}
We use the symbol $\sigma$ to denote those constants
that control objects that are corrected iteratively, and so, we have to ensure that the
control prevails along the proof.
\end{remark}

\begin{remark}\label{rem:constants}
Constants $\mathfrak{C}_1$ and $\mathfrak{C}_2$, which are given explicitly
later in Equations~\eqref{eq:Cstar} and~\eqref{eq:Cstar2}, depend explicitly on
the initial data. Concretely, they depend polynomially on $\sub{c}{\Dif F}$,
$\sub{c}{\Dif^2 F}$, $\sub{c}{\Omega}$, $\sub{c}{\Dif \Omega}$, $\sub{c}{\Dif
a}$ and $\sub{c}{\Dif^2 a}$. They also depend polynomially on
$(\sub{\sigma}{\Dif K}-\snorm{\Dif K}_\rho)^{-1}$, $(\sub{\sigma}{\Dif
K^\top}-\|\Dif K^\top\|_\rho)^{-1}$, $(\sub{\sigma}{B}-\snorm{B}_\rho)^{-1}$,
$(\sub{\sigma}{T}-|\avg{T}^{-1}|)^{-1}$ and
$\mathrm{dist}(K(\TT_\rho^n),\partial \B))^{-1}$, and on the strict estimations
$\sub{\sigma}{\Dif K}$, $\sub{\sigma}{\Dif K^\top}$, $\sub{\sigma}{B}$, and
$\sub{\sigma}{T}$, respectively. If we fix $c_R>0$ then $\mathfrak{C}_1$ and
$\mathfrak{C}_2$ depend polynomially on $n$, $c_R$, $\gamma$ and powers of
$\rho$.  These constants can be optimized by selecting a suitable value of
$\rho_\infty$ and adjusting the rate of converge of the iterative scheme.
\end{remark}

\subsection{Proof of the KAM theorem}\label{ssec:proof:KAM}

The proof follows from a standard {\KAM} scheme.
Although a detailed proof of a very similar statement is given in Chapter 4
of~\cite{HaroCFLM}, for the sake of completeness we present here a compact
self-contained 
exposition.
This allows us to describe the main geometric
objects so that the reader can relate them to a corresponding constant that
contributes to the computation of $\mathfrak{C}_1$ and $\mathfrak{C}_2$. 

The argument
consists in
refining $K(\theta)$ by means of a 
Newton method.
At every step, we
add to $K(\theta)$ a correction $\Delta K(\theta)$, given by an approximate
solution of the linearized equation
\begin{equation}\label{eq:newton:linear}
\Dif F(K(\theta)) \Delta K(\theta) - \Delta K(\theta+\omega)=-E(\theta).
\end{equation}
To face this equation we consider a suitable frame on the full tangent space.
The main ingredient is the fact that (under certain assumptions)
an approximately $F$-invariant torus is also approximately Lagrangian.
Hence, the linear dynamics around the torus
is approximately reducible. Specifically, it turns out that we have a behavior
similar to Equation~\eqref{reducibility} but with an error of
order $\snorm{E}_\rho$. This is 
enough to perform a quadratic scheme to correct the initial
approximation.

\begin{lemma}[The Iterative Lemma]\label{prop:newton1step}
Let us consider the same setting and hypotheses of
Theorem~\ref{theo:KAM}.
Then, there exist constants $\hat C_1$, $\hat C_2$, $\hat C_3$, $\hat C_4$, and $\hat C_5$
(depending explicitly on the constants defined in the hypotheses)
such that if
\begin{equation}\label{eq:hyp:small}
\frac{\hat{\mathfrak{C}}_1
\snorm{E}_{\rho}}{\gamma^2 \delta^{2\tau+1}} < 1
\end{equation}
holds for some $0<\delta<\rho$, where
\begin{equation}\label{eq:hyp:prop:exp}
\hat{\mathfrak{C}}_1 := \max \bigg\{
\hat C_1 \gamma \delta^\tau,
\frac{n \hat C_2}{\sub{\sigma}{\Dif K}-\snorm{\Dif K}_{\rho}},
\frac{2 n \hat C_2}{\sub{\sigma}{\Dif K^\top}-\snorm{\Dif K^\top}_{\rho}},
\frac{\hat C_3}{\sub{\sigma}{B} - \snorm{B}_\rho}, 
\frac{\hat C_4}{\sub{\sigma}{T} - |\avg{T}^{-1}|},
\frac{\hat C_2 \delta}{\mathrm{dist} (K(\TT^n_{\rho}),\partial \B)}
\bigg\}, 
\end{equation}
then we have an approximate $F$-invariant torus
of the same frequency $\omega$ given by $\bar K=K+\Delta K$,
that defines new objects $\bar B$ and $\bar T$ (obtained
replacing $K$ by $\bar K$)
satisfying
\begin{align}
&\snorm{\Dif \bar K}_{\rho-3\delta} < \sub{\sigma}{\Dif K},& {}
&\snorm{\Dif \bar K^\top}_{\rho-3 \delta} < \sub{\sigma}{\Dif K^\top},& {}
& \mathrm{dist}(\bar K(\TT_{\rho-2\delta}^n),\partial \B) > 0, 
\label{eq:newob1}  \\
&\snorm{\bar B}_{\rho-3\delta} < \sub{\sigma}{B},& {}
&|\avg{\bar T}^{-1}|< \sub{\sigma}{T},
\label{eq:newob2}
\end{align}
and
\begin{equation}\label{eq:iter1}
\snorm{\bar K-K}_{\rho-2\delta} <
\frac{\hat C_2}{\gamma^2 \delta^{2\tau }} \snorm{E}_\rho,
\quad
\snorm{\bar B -B}_{\rho-3\delta} < \frac{\hat
C_3}{\gamma^{2} \delta^{2\tau +1}} \snorm{E}_\rho,
\quad
|\avg{\bar T}^{-1} - \avg{T}^{-1}| < \frac{\hat
C_4}{\gamma^{2} \delta^{2\tau +1}} \snorm{E}_\rho.
\end{equation}

The new error of invariance is given by
\begin{equation}\label{eq:prop:new:error}
\bar E(\theta)=F(\bar K(\theta)) - \bar K(\theta+\omega), \qquad 
\snorm{\bar E}_{\rho-2\delta} < \frac{\hat C_5}{\gamma^{4}
\delta^{4\tau}} \snorm{E}^2_\rho.
\end{equation}
\end{lemma}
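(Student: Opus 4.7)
The plan is to implement one step of a quasi-Newton scheme adapted to the symplectic geometry, using the symplectic frame $P(\theta)$ of Equation~\eqref{eq:def-P} to reduce the linearized invariance Equation~\eqref{eq:newton:linear} to a triangular system of cohomological equations. The first thing I would do is establish that the torus $K$ is \emph{approximately Lagrangian}: from Equation~\eqref{eq:inv:rot:der}, differentiating the error $E$ one gets $\Dif F(K(\theta))\Dif K(\theta)=\Dif K(\theta+\omega)+\Dif E(\theta)$, and combined with the symplectic condition~\eqref{symplectic_condition} this yields $\snorm{\sub{\Omega}{K}}_{\rho-\delta} = \mathcal{O}(\snorm{E}_\rho/\delta)$ via a Cauchy estimate. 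A similar argument shows that $P(\theta+\omega)^{-1}\Dif F(K(\theta))P(\theta)=\Lambda(\theta)+\mathcal{O}(\snorm{E}_\rho/\delta)$, so the reducibility identity~\eqref{reducibility} holds modulo an error controlled by $E$.

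Next, writing the correction as $\Delta K(\theta)=P(\theta)\xi(\theta)$ with $\xi=(\xiL,\xiN)^\top$ and projecting the approximate linearized equation onto the frame, Equation~\eqref{eq:newton:linear} becomes
\begin{equation*}
\Lambda(\theta)\xi(\theta)-\xi(\theta+\omega)=-\eta(\theta)+\mathcal{O}(\snorm{E}_\rho^2/\delta),
\qquad \eta(\theta)=P(\theta+\omega)^{-1}E(\theta).
\end{equation*}
Splitting into components gives the pair of cohomological equations
\begin{equation*}
\L\xiN=\etaN,\qquad \L\xiL=\etaL+T\,\xiN.
\end{equation*}
For the $N$-equation, the average $\avg{\etaN}$ is not zero but can be shown, using the approximate Lagrangian property and the geometric identity linking $N$ to $\Omega$, to be of size $\mathcal{O}(\snorm{E}_\rho^2)$, so we drop it and solve by R\"ussmann to obtain $\snorm{\xiN}_{\rho-\delta}\leq c_R\gamma^{-1}\delta^{-\tau}\snorm{\etaN}_\rho$. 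The average $\avg{\xiN}$ is then fixed by imposing the solvability condition $\avg{T\xiN+\etaL}=0$ of the $L$-equation, which is where hypothesis $H_4$ enters: invertibility of $\avg{T}$ (with bound $\sub{\sigma}{T}$) gives $\avg{\xiN}=-\avg{T}^{-1}\avg{\etaL+T\widetilde{\xiN}}$ with control by $\sub{\sigma}{T}\snorm{E}_\rho$. Solving the $L$-equation by R\"ussmann costs a second small-divisor factor, producing the $\gamma^{-2}\delta^{-2\tau}$ loss in~\eqref{eq:iter1}.

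With $\Delta K=P\xi$ controlled in $\TT^n_{\rho-2\delta}$, I would then Taylor-expand $F$ around $K(\theta)$ to obtain
\begin{equation*}
\bar E(\theta)=\bigl[\Dif F(K(\theta))\Delta K(\theta)-\Delta K(\theta+\omega)+E(\theta)\bigr]+\tfrac{1}{2}\Dif^2 F(\zeta(\theta))[\Delta K(\theta)]^{\otimes 2},
\end{equation*}
where the bracketed term is the residual from solving the linearized equation only approximately (size $\mathcal{O}(\snorm{E}_\rho^2/(\gamma^2\delta^{2\tau+1}))$), and the quadratic term uses $\sub{c}{\Dif^2 F}$ together with the bound on $\Delta K$. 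Both contributions are of size $\snorm{E}_\rho^2/(\gamma^4\delta^{4\tau})$, yielding~\eqref{eq:prop:new:error}. Finally, the bounds~\eqref{eq:newob1}--\eqref{eq:newob2} on $\Dif\bar K$, $\Dif\bar K^\top$, $\bar B$, $\avg{\bar T}^{-1}$ and $\dist(\bar K(\TT^n_{\rho-2\delta}),\partial\B)$ follow from the closeness estimates~\eqref{eq:iter1}, Cauchy estimates on derivatives of $\Delta K$, Neumann series expansions for $\bar B=-(\Dif\bar K^\top\Omega(\bar K)N_0)^{-1}$ and $\avg{\bar T}^{-1}$, all absorbed by the smallness hypothesis~\eqref{eq:hyp:small} with the choice of $\hat{\mathfrak{C}}_1$ in~\eqref{eq:hyp:prop:exp}.

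The main obstacle I anticipate is keeping the constants $\hat C_1,\ldots,\hat C_5$ completely explicit and sharp, since each of the estimates above (Cauchy on derivatives of $E$, norm of $P^{-1}$ in terms of $\sub{\sigma}{B}$ and $\sub{\sigma}{\Dif K}$, R\"ussmann with constant $c_R$, Taylor remainder with $\sub{c}{\Dif^2 F}$, Neumann expansions for $\bar B$ and $\avg{\bar T}^{-1}$) introduces multiplicative factors that must be carefully threaded so as to match the stated form of $\hat{\mathfrak{C}}_1$; in particular, verifying that the quadratic estimate~\eqref{eq:prop:new:error} indeed carries only four powers of $(\gamma\delta^\tau)^{-1}$ rather than more demands showing that the error in approximate reducibility does not compound.
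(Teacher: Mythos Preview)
Your overall strategy matches the paper's, but two steps would not go through as written. First, the claim that $\snorm{\sub{\Omega}{K}}_{\rho-\delta}=\mathcal{O}(\snorm{E}_\rho/\delta)$ ``via a Cauchy estimate'' is incomplete: differentiating $E$ and using~\eqref{symplectic_condition} only controls $\L\sub{\Omega}{K}$ (this is Equation~\eqref{eq:newton:LOmega}), not $\sub{\Omega}{K}$ itself. One must still note that $\avg{\sub{\Omega}{K}}=0$ (because $\sform={\rm d}\aform$ makes $K^*\sform$ exact on $\TT^n$) and then apply the R\"ussmann estimate~\eqref{eq:russ:first}, paying an extra factor $\gamma^{-1}\delta^{-\tau}$. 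This is why the paper obtains $\snorm{\sub{\Omega}{K}}_{\rho-2\delta}\le C_2\gamma^{-1}\delta^{-(\tau+1)}\snorm{E}_\rho$; accordingly your bound $\mathcal{O}(\snorm{E}_\rho^2/(\gamma^2\delta^{2\tau+1}))$ on the bracketed residual is too optimistic (the paper gets $\gamma^{-3}\delta^{-(3\tau+1)}$, though the Taylor remainder dominates so~\eqref{eq:prop:new:error} survives).

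The more serious gap is the mechanism you invoke for $\avg{\etaN}=\mathcal{O}(\snorm{E}_\rho^2)$. Approximate Lagrangianity and the frame identities linking $N$ to $\Omega$ do \emph{not} suffice: the integrand $\Dif K(\theta+\omega)^\top\Omega(K(\theta+\omega))E(\theta)$ is only $\mathcal{O}(E)$ pointwise, and nothing in the Lagrangian geometry alone forces its average to be smaller. The paper handles this via Lemma~\ref{lem-quad-avg}, whose decisive ingredient is the \emph{exactness of the map $F$}: the primitive $S$ in~\eqref{eq:exact} lets one rewrite $\Dif K^\top\Omega E$ as a sum of total derivatives (averaging to zero) plus terms of the form $\Dif E^\top\Delta a$ and $\Dif K^\top\Delta^2 a$, both quadratic in $E$. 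Were $F$ merely symplectic but not exact, this average would generically be $\mathcal{O}(E)$ and the quadratic error estimate~\eqref{eq:prop:new:error} would fail. You need to invoke exactness of $F$ explicitly at this step.
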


Before proving Lemma~\ref{prop:newton1step}, we present two auxiliary results. 

\begin{lemma}\label{lem:coho:solve1}
Let us consider vector-valued maps $\eta= (\etaL,\etaN) : \TT^n \rightarrow
\RR^{n} \times \RR^{n}$ and a matrix-valued map $T : \TT^n \rightarrow \RR^{n\times n}$.
Assume that $T$ satisfies the non-degeneracy condition $\det
\avg{T(\theta)} \neq 0$, $\forall
\theta \in \TT^n$. Then, the system of equations
\[
\begin{pmatrix}
I_n & T(\theta) \\
O_n & I_n
\end{pmatrix}
\begin{pmatrix}
\xiL(\theta) \\
\xiN(\theta)
\end{pmatrix}
-
\begin{pmatrix}
\xiL(\theta+\omega) \\
\xiN(\theta+\omega)
\end{pmatrix}
=
\begin{pmatrix}
\etaL(\theta) \\
\etaN(\theta)-\avg{\etaN}
\end{pmatrix}
\]
has a (formal) solution 
$\xi= (\xiL,\xiN) : \TT^n \rightarrow
\RR^{n} \times \RR^{n}$
given by
\begin{align}
\xiN(\theta)={} &\R(\etaN(\theta)) +
\xiN_0,\label{eq:xiy} \\
\xiL(\theta)={} &\R(\etaL(\theta) - T(\theta)
\xiN(\theta))+\xiL_0,\label{eq:xix}
\end{align}
for every $\xiL_0 \in \RR^n$, and
\begin{equation}\label{eq:averxiy}
\xiN_0= \avg{T}^{-1} \avg{\etaL-T \R(\etaN)}.
\end{equation}
Note that $\R$ gives the zero-average solution of the one bite cohomological
equation (see Equation~\eqref{eq:standard}).
\end{lemma}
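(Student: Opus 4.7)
The plan is to read off the system as two scalar cohomological equations, solve the lower one first (which is already in standard form), then substitute into the upper one and use the non-degeneracy of $\avg{T}$ to fix the free constant appearing in the lower solution so that the compatibility condition for the upper equation is met.

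First I would expand the matrix product to obtain the two component equations
\begin{align*}
\xiL(\theta)-\xiL(\theta+\omega)+T(\theta)\xiN(\theta) &= \etaL(\theta),\\
\xiN(\theta)-\xiN(\theta+\omega) &= \etaN(\theta)-\avg{\etaN}.
\end{align*}
The second equation is exactly the one-bite cohomological equation $\L\xiN = \etaN-\avg{\etaN}$. Since the right-hand side has zero average, Equations~\eqref{eq:standard}--\eqref{eq:small:formal} give the formal solution $\xiN(\theta)=\R(\etaN(\theta))+\xiN_0$, where $\xiN_0\in\RR^n$ is an arbitrary free constant (the average of $\xiN$). This establishes Equation~\eqref{eq:xiy}.

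Next I would substitute this expression into the first component equation, which becomes $\L\xiL = \etaL - T\cdot\xiN$. The solvability condition for this one-bite equation is that its right-hand side have zero average, i.e.
\[
\avg{\etaL - T\xiN} = \avg{\etaL} - \avg{T\R(\etaN)} - \avg{T}\,\xiN_0 = 0.
\]
Because $\avg{T}$ is invertible by hypothesis, this determines $\xiN_0$ uniquely as
\[
\xiN_0 = \avg{T}^{-1}\avg{\etaL - T\R(\etaN)},
\]
which is exactly Equation~\eqref{eq:averxiy}. With this choice of $\xiN_0$, the right-hand side $\etaL-T\xiN$ has zero average, so applying $\R$ gives the zero-average solution $\R(\etaL-T\xiN)$, and the general (formal) solution of the first equation is $\xiL(\theta)=\R(\etaL(\theta)-T(\theta)\xiN(\theta))+\xiL_0$ for arbitrary $\xiL_0\in\RR^n$, which is Equation~\eqref{eq:xix}.

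There is no real obstacle: the argument is a direct back-substitution exploiting the triangular structure, and the only algebraic content is the inversion of $\avg{T}$ to enforce the solvability condition on the second stage. Since the statement claims only a formal solution, no estimates or convergence issues enter; the quantitative control (via R\"ussmann-type estimates on $\R$ applied to $\etaN$ and to $\etaL-T\xiN$) will be needed later when this lemma is invoked inside the iterative step, but is not part of the present claim.
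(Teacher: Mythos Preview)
Your proof is correct and follows exactly the same approach as the paper: exploit the triangular structure to solve the lower cohomological equation first (leaving the average $\xiN_0$ free), then use the invertibility of $\avg{T}$ to fix $\xiN_0$ so that the right-hand side of the upper equation has zero average, and finally apply $\R$ again. There is nothing to add.
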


\begin{proof}
The triangular form of this system allows us to face first the equation $\L
\xiN (\theta) = \etaN(\theta) - \avg{\etaN}$, where $\L$ is given
by Equation~\eqref{eq:calL}. 
The right hand side of this equation has already zero
average, so we obtain the solution in~\eqref{eq:xiy}, where 
$\xiN_0 = \avg{\etaN}\in \RR^n$. Then, the upper equation is $\L \etaL
(\theta)=\etaN(\theta) - T(\theta) \xiN(\theta)$ and the vector $\xiN_0$
selected in~\eqref{eq:averxiy} allows us to guarantee that $\avg{\etaL - T
\xiN}=0$. In this way, we obtain the solution in~\eqref{eq:xix}.
\end{proof}

\begin{lemma}\label{lem-quad-avg}
If $K(\theta)$ is an approximately $F$-invariant torus with error
$E(\theta)$, then
\[
\avg{\Dif K(\theta+\omega)^\top
\Omega(K(\theta+\omega)) E(\theta)} = 
\avg{\Dif E(\theta)^\top \Delta a(\theta) +
\Dif K(\theta+\omega)^\top \Delta^2 a(\theta)},
\]
where
\[
\begin{split}
\Delta a(\theta) = {} & a(F(K(\theta))) -
a(K(\theta+\omega)) = \int_0^1 \Dif a(K(\theta+\omega) + tE(\theta))
E(\theta) \dif t, \\
\Delta^2 a(\theta) = {} &  a(F(K(\theta)))-a(K(\theta+\omega)) -
\Dif a(K(\theta+\omega)) E(\theta)\\
= {} & \int_0^1 (1-t) \Dif^2 a(K(\theta+\omega) + tE(\theta))
E(\theta)^{\otimes 2} \dif t.
\end{split}
\]
\end{lemma}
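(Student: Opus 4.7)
\noindent\textbf{Strategy.} The identity is purely algebraic: it expresses the average on the left as a sum of terms that are each manifestly ``quadratic'' in the error $E$ (the first involves $\Dif E$, the second the second-order Taylor remainder $\Delta^2 a$). My plan is to split the symplectic form using the identity $\Omega=\Dif a^\top-\Dif a$ from Equation~\eqref{def-Omega}, then eliminate the two resulting pieces by exploiting, respectively, the closedness of $\aform$ (via integration by parts on $\TT^n$) and the exactness of $F$ (via the primitive $S$ from Equation~\eqref{eq:exact}). Taylor's theorem will then connect things to $\Delta a$ and $\Delta^2 a$.

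\noindent\textbf{Step 1: split the LHS.} Substitute $\Omega(K(\theta+\omega))=\Dif a(K(\theta+\omega))^\top-\Dif a(K(\theta+\omega))$ into the LHS to obtain
\[
\avg{\Dif K(\theta+\omega)^\top \Omega(K(\theta+\omega)) E(\theta)}
=\avg{\Dif K(\theta+\omega)^\top \Dif a(K(\theta+\omega))^\top E(\theta)}
-\avg{\Dif K(\theta+\omega)^\top \Dif a(K(\theta+\omega)) E(\theta)}.
\]

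\noindent\textbf{Step 2: handle the symmetric piece by integration by parts.} The scalar function $\theta\mapsto E(\theta)^\top a(K(\theta+\omega))$ is smooth and $1$-periodic, so the average of each of its partial derivatives vanishes. Applying the product rule and the chain rule (and using $(Da)^\top$ conventions consistent with~\eqref{def-Omega}), componentwise this yields
\[
\avg{\Dif K(\theta+\omega)^\top \Dif a(K(\theta+\omega))^\top E(\theta)}
=-\avg{\Dif E(\theta)^\top a(K(\theta+\omega))}.
\]

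\noindent\textbf{Step 3: Taylor-expand $a$ around $K(\theta+\omega)$.} By Taylor's theorem with integral remainder,
\[
\Dif a(K(\theta+\omega))\,E(\theta)=\Delta a(\theta)-\Delta^2 a(\theta),
\]
so the second average from Step~1 becomes
\[
\avg{\Dif K(\theta+\omega)^\top \Dif a(K(\theta+\omega)) E(\theta)}
=\avg{\Dif K(\theta+\omega)^\top \Delta a(\theta)}-\avg{\Dif K(\theta+\omega)^\top \Delta^2 a(\theta)}.
\]

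\noindent\textbf{Step 4: handle $\avg{\Dif K(\theta+\omega)^\top \Delta a(\theta)}$ via exactness of $F$.} Equation~\eqref{eq:exact} gives $\Dif S(K(\theta))\,\Dif K(\theta)=\Dif(S\!\circ\! K)(\theta)$, whose average vanishes. Writing this out using $\Dif(F\!\circ\! K)(\theta)=\Dif K(\theta+\omega)+\Dif E(\theta)$, applying the translation-invariance of the average to move $\avg{a(K(\theta))^\top\Dif K(\theta)}$ to $\avg{a(K(\theta+\omega))^\top\Dif K(\theta+\omega)}$, and regrouping, one obtains
\[
\avg{\Dif K(\theta+\omega)^\top \Delta a(\theta)}=-\avg{\Dif E(\theta)^\top a(F(K(\theta)))}.
\]

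\noindent\textbf{Step 5: assemble.} Combining Steps~2--4 into Step~1,
\[
\text{LHS}=-\avg{\Dif E(\theta)^\top a(K(\theta+\omega))}+\avg{\Dif E(\theta)^\top a(F(K(\theta)))}+\avg{\Dif K(\theta+\omega)^\top \Delta^2 a(\theta)}.
\]
The first two averages combine into $\avg{\Dif E(\theta)^\top [a(F(K(\theta)))-a(K(\theta+\omega))]}=\avg{\Dif E(\theta)^\top \Delta a(\theta)}$, which is exactly the RHS.

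\noindent\textbf{Main obstacle.} The algebra itself is routine; the real care lies in keeping track of transposes, column/row conventions, and signs in Step~2, where one must verify that differentiating the dot product of an $\RR^{2n}$-valued $E(\theta)$ with $a(K(\theta+\omega))$ genuinely reproduces $(\Dif K\!\circ\! R_\omega)^\top(\Dif a\!\circ\! K\!\circ\! R_\omega)^\top E$ and not its transpose. Conceptually, Steps~2 and~4 are the twin manifestations of $\sform=\dif\aform$ and $F^*\aform-\aform=\dif S$; recognising that these are precisely what is needed to cancel the non-quadratic pieces is the one ingredient that is not mechanical.
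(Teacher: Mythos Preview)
Your proof is correct and follows essentially the same approach as the paper: split $\Omega=\Dif a^\top-\Dif a$, use that a total $\theta$-derivative has zero average to handle the $\Dif a^\top$ piece, invoke exactness of $F$ (the primitive $S$) together with $\Dif(F\comp K)=\Dif K\comp R_\omega+\Dif E$ and translation-invariance of the average to handle the $\Dif a$ piece, and Taylor-expand to produce $\Delta a$ and $\Delta^2 a$. The only cosmetic difference is that the paper carries out the whole manipulation pointwise in $\theta$ and takes the average at the very end, whereas you take averages step by step; the ingredients and the structure of the cancellations are identical.
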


\begin{proof}
From the definition of $\Omega$ in~\eqref{def-Omega}, and some easy computations,
\begin{equation*}
\begin{split}
\label{eq-LOE}
\Dif K (\theta & +\omega)^\top  \Omega(K(\theta+\omega)) E(\theta) \\
= & {} \Dif K(\theta+\omega)^\top \Dif a(K(\theta+\omega))^\top E(\theta) -
\Dif K(\theta+\omega)^\top \Dif a(K(\theta+\omega)) E(\theta) \\
= &{}  \left(\Dif (a(K(\theta+\omega)))\right)^\top E(\theta) + 
\Dif K(\theta+\omega)^\top \left( \Delta^2 a(\theta) - a(F(K(\theta))) + a(K(\theta+\omega))\right) \\ 
= & {}\left(\Dif (a(K(\theta+\omega))^\top E(\theta))\right)^\top - (\Dif E(\theta))^\top a(K(\theta+\omega)) + 
\Dif K(\theta+\omega)^\top \Delta^2 a(\theta) \\ 
   & {} -  \left(\Dif F(K(\theta)) \Dif K(\theta) - \Dif E(\theta)\right)^\top a(F(K(\theta))) +  \Dif K(\theta+\omega)^\top a(K(\theta+\omega))\\
= & {} \left(\Dif (a(K(\theta+\omega))^\top E(\theta))\right)^\top + (\Dif E(\theta))^\top \Delta a(\theta) + 
\Dif K(\theta+\omega)^\top \Delta^2 a(\theta)  \\
   & {}  - (\Dif (S(K(\theta))))^\top - 
\Dif K(\theta)^\top a(K(\theta)) +  \Dif K(\theta+\omega)^\top a(K(\theta+\omega)),
\end{split}
\end{equation*}
where in the last identity we use that $S$ is the primitive function of $F$,
see Equation~\eqref{eq:exact}. The result follows by taking averages and realizing that
$\Dif (a(K(\theta+\omega))^\top E(\theta))$, $\Dif (S(K(\theta)))$  and
$a(K(\theta+\omega))^\top \Dif K(\theta+\omega) - a(K(\theta))^\top \Dif
K(\theta)$ have zero average.
\end{proof}

\begin{proof}[Proof of Lemma~\ref{prop:newton1step}]
In the first part of the proof we see that, since $K(\TT^n)$ is approximately
$F$-invariant, the frame $P(\theta)$ is symplectic up to an error 
controlled by $E(\theta)$.

We start by controlling the objects
$N$, $B$ and $A$, given in Equations~\eqref{eq:def-N}, \eqref{eq:def-B},
and~\eqref{eq:def-A}, respectively. By hypothesis, we have $\snorm{\Dif K}_\rho
< \sub{\sigma}{\Dif K}$ and $\snorm{B}_\rho < \sub{\sigma}{B}$. Then, we obtain
\begin{equation}\label{eq:estima:A}
\snorm{A}_\rho = \snorm{A^\top}_\rho \leq \frac{1}{2}
\snorm{B^\top N_0^\top
(\Omega \comp K)
N_0 B}_\rho \leq \frac{1}{2} n \sub{c}{N_0^\top (\Omega \circ K) N_0} 
(\sub{\sigma}{B})^2 =: \sub{c}{A}.
\end{equation}
where the constant $\sub{c}{A}$ is introduced in order to simplify subsequent
computations. We use small letters ($\sub{c}{A}$, $\sub{c}{N}$, etc.) when the
constant is related to an estimation of a geometric object, using the subscript
to identify the corresponding object. We use capital letters ($C_1$, $C_2$,
etc.) for constants that appear in estimates that depend on the error
$\snorm{E}_\rho$ (divisors are considered separately).

We estimate the norm of $N$ as
\[
\snorm{N}_\rho \leq \snorm{\Dif K}_\rho
\snorm{A}_\rho + \snorm{N_0}_\rho
\snorm{B}_\rho \leq \sub{\sigma}{\Dif K} \sub{c}{A} + \sub{c}{N_0} \sub{\sigma}{B} =: \sub{c}{N}
\]
and
\[
\snorm{N^\top}_\rho \leq \sub{c}{A} \sub{\sigma}{\Dif K^\top} + n \sub{\sigma}{B} \sub{c}{N_0^\top} =: \sub{c}{N^\top}.
\]
The frame $P(\theta)$, given by Equation~\eqref{eq:def-P},
satisfies
\begin{align}
\snorm{P}_\rho \leq {} &
\snorm{\Dif K}_\rho+\snorm{N}_\rho \leq \sub{\sigma}{\Dif K}+\sub{c}{N} =: \sub{c}{P}
\label{eq:norm:cP}
\nonumber
\end{align}
and the torsion $T(\theta)$, given by
Equation~\eqref{def-T}, is controlled by
\[
\snorm{T}_\rho \leq \snorm{N^\top}_\rho
\snorm{\Omega}_{\B} \snorm{\Dif F}_{\B} \snorm{N}_\rho \leq  \sub{c}{N^\top}
\sub{c}{\Omega} \sub{c}{\Dif F} \sub{c}{N} =: \sub{c}{T}.
\]

Now we control the approximate Lagrangian character of $K(\TT^n)$.
Taking derivatives at both sides of $E(\theta)=F(K(\theta))-K(\theta+\omega)$ we have
\begin{equation}\label{eq:newton:der}
\Dif F(K(\theta)) \Dif K(\theta)
=\Dif K(\theta+\omega)+\Dif E(\theta).
\end{equation}
Then, a direct computation of $\L \sub{\Omega}{K} (\theta)$, using Equations~\eqref{eq:calL}
and~\eqref{eq:newton:der}, leads to
\begin{equation}\label{eq:newton:LOmega}
\begin{split}
\L \sub{\Omega}{K} (\theta)
= & 
 \Dif K(\theta+\omega)^{\top}
\Delta\Omega(\theta)\, 
\Dif K(\theta+\omega) + \Dif K(\theta+\omega)^{\top}\Omega(F(K(\theta))) \,\Dif
E(\theta)\\
& + \Dif E(\theta)^{\top}\Omega(F(K(\theta)))
\Dif F(K(\theta))\Dif
               K(\theta)\, , 
\end{split}
\end{equation}
where
\begin{equation}\label{eq:newton:DOmega}
\Delta\Omega(\theta) = \Omega(F(K(\theta)))-
\Omega(K(\theta+\omega))= \int_0^1 \Dif \Omega(K(\theta+\omega)+t
E(\theta)) E(\theta) \dif t\, .
\end{equation}
Using the Mean Value Theorem for integrals and properties of Banach algebras we
obtain $\snorm{\Delta\Omega}_{\rho} \leq \sub{c}{\Dif \Omega} \snorm{E}_\rho$,
and introducing this expression into Equation~\eqref{eq:newton:LOmega} we
control $\snorm{\L \sub{\Omega}{K}}_{\rho-\delta}$ as follows (we use Cauchy
estimates)
\[
\snorm{\L \sub{\Omega}{K}}_{\rho-\delta} \leq 
\bigg(\sub{\sigma}{\Dif K^\top} \sub{\sigma}{\Dif K} \sub{c}{\Dif \Omega} \delta + n \sub{\sigma}{\Dif K^\top} \sub{c}{\Omega} + 2n
\sub{c}{\Omega} \sub{c}{\Dif F} \sub{\sigma}{\Dif K} \bigg) 
\frac{\snorm{E}_\rho}{\delta} =: \frac{C_1}{\delta} \snorm{E}_\rho.
\]
Then, using the R\"ussmann estimates (see Equation~\eqref{eq:russ:first} or Lemma~\ref{lem-Russmann})
we end up with
\[
\snorm{\sub{\Omega}{K}}_{\rho-2\delta} \leq \frac{c_R C_1}{\gamma \delta^{\tau +1}}
\snorm{E}_\rho =:  \frac{C_2}{\gamma \delta^{\tau +1}}
\snorm{E}_\rho.
\]

Next, we introduce the error
in the symplectic character of the frame as follows
\begin{equation}\label{eq:newton:sym}
E_{\mathrm{sym}}(\theta)=P(\theta)^\top \Omega(K(\theta))
P(\theta) - \Omega_0
\end{equation}
and a straightforward computation shows that
\begin{equation}\label{eq:newton:sym2}
E_{\mathrm{sym}}(\theta)=
\begin{pmatrix}
\sub{\Omega}{K}(\theta) & \sub{\Omega}{K}(\theta)A(\theta) \\
A(\theta)^\top \sub{\Omega}{K}(\theta) & A(\theta)^\top
\sub{\Omega}{K}(\theta) A(\theta)
\end{pmatrix},
\end{equation}
which is controlled by
\begin{equation}\label{eq:cotanormEsym}
\snorm{E_{\mathrm{sym}}}_{\rho-2\delta} \leq \frac{(1+\sub{c}{A}) \max
\{1,\sub{c}{A}\}C_2 }{\gamma \delta^{\tau+1}} \snorm{E}_\rho =: \frac{C_3}{\gamma
\delta^{\tau+1}} \snorm{E}_\rho.
\end{equation}

Next, we show that the tangent map $\Dif F$ is approximately reducible
in the frame $P(\theta)$. To this end, we introduce
\begin{equation}\label{eq:newton:red}
E_{\mathrm{red}}(\theta)
=
-\Omega_0 P(\theta+\omega)^{\top}\Omega(K(\theta+\omega))\Dif
F(K(\theta))
 P(\theta)\, 
 -  \Lambda(\theta),
\end{equation}
where $\Lambda(\theta)$ is given by Equation~\eqref{reducibility}. 
We decompose $E_{\mathrm{red}}(\theta)$ into
four $(n\times n)$-block
components given by:
\begin{align}
E_{\mathrm{red}}^{1,1} (\theta) 
 = {} &  N(\theta+\omega)^{\top}\Omega(K(\theta+\omega))\Dif E(\theta)
+ A(\theta+\omega)^\top \sub{\Omega}{K}(\theta+\omega)\, , 
\label{eq:Lambda1} 
\\ 
E_{\mathrm{red}}^{1,2}(\theta)  = {} &
N(\theta+\omega)^{\top}\Omega(K(\theta+\omega))\Dif F(K(\theta))
N(\theta) - T(\theta) = O_n
\nonumber \\
E_{\mathrm{red}}^{2,1} (\theta) 
= {} & - \sub{\Omega}{K}(\theta+\omega) - \Dif K
(\theta+\omega)^\top \Omega(K(\theta+\omega)) \Dif E(\theta)\, , 
\label{eq:Lambda3} 
\\
E_{\mathrm{red}}^{2,2} (\theta) 
= {} & - \sub{\Omega}{K}(\theta) A(\theta) +
\Dif K (\theta+\omega)^\top \Delta \Omega(\theta) \Dif F(K(\theta))
N(\theta) \nonumber \\
& + \Dif E(\theta)^\top \Omega(F(K(\theta))) \Dif F(K(\theta)) N(\theta).
\label{eq:Lambda4} 
\end{align}
Then, we conclude that the error of reducibility satisfies
\[
\snorm{E_{\mathrm{red}}}_{\rho-2\delta} \leq \frac{\max \{C_4, C_5+
C_6\}}{\gamma \delta^{\tau +1}} \snorm{E}_\rho =:
\frac{C_7}{\gamma \delta^{\tau+1}} \snorm{E}_\rho
\]
where
\begin{align}
C_4 = {} & n \sub{c}{N^\top} \sub{c}{\Omega} \gamma \delta^\tau + \sub{c}{A} C_2, \nonumber \\
C_5 = {} & C_2 + n \sub{\sigma}{\Dif K^\top} \sub{c}{\Omega} \gamma
\delta^\tau, \nonumber \\
C_6 = {} & \sub{c}{A} C_2 + \sub{\sigma}{\Dif K^\top} \sub{c}{\Dif \Omega} \sub{c}{\Dif F} \sub{c}{N} \gamma
\delta^{\tau +1} + 2n \sub{c}{\Omega} \sub{c}{\Dif F} \sub{c}{N} \gamma
\delta^\tau. \label{eq:cons:C6}
\end{align}

Now, we study Equation~\eqref{eq:newton:linear}
using the symplectic frame in~\eqref{eq:def-P}:
we introduce
$\Delta K(\theta)=P(\theta) \xi(\theta)$ thus 
obtaining
\[
\Dif F(K(\theta)) P(\theta) \xi(\theta) -
P(\theta+\omega) \xi(\theta+\omega) = - E(\theta).
\]
We multiply
both sides by $-\Omega_0 P(\theta+\omega)^\top
\Omega(K(\theta+\omega))$ and we get
\begin{equation}\label{eq:lin:fram}
\begin{split}
\Lambda(\theta)
\xi(\theta)
+ E_{\mathrm{red}}(\theta) & \xi(\theta) -(I-\Omega_0
E_{\mathrm{sym}}(\theta+\omega)) \xi(\theta+\omega) = \\
 & \Omega_0
P(\theta+\omega)^\top \Omega(K(\theta+\omega)) E(\theta),
\end{split}
\end{equation}
where we used Equations~\eqref{eq:newton:sym}
and~\eqref{eq:newton:red}. In order to obtain an approximate solution
of Equation~\eqref{eq:lin:fram}, we 
consider Lemma~\ref{lem:coho:solve1} taking
\begin{equation}\label{eq:case:eta}
\eta(\theta)= 
\Omega_0
P(\theta+\omega)^\top \Omega(K(\theta+\omega)) E(\theta),
\end{equation}
and $T(\theta)$ given by Equation~\eqref{def-T}. 
We choose the solution satisfying $\xiL_0=0$. 
To control
the resulting Equations~\eqref{eq:xiy},~\eqref{eq:xix},
and~\eqref{eq:averxiy}, we first compute
\begin{align}
\snorm{\etaL}_\rho = {} & \snorm{N(\theta+\omega)^\top
\Omega(K(\theta+\omega)) E(\theta)}_\rho  \leq  \sub{c}{N^\top} \sub{c}{\Omega}
\snorm{E}_\rho, \label{eq:etaLnorm}\\
\snorm{\etaN}_\rho = {} & \snorm{\Dif K(\theta+\omega)^\top
\Omega(K(\theta+\omega)) E(\theta)}_\rho  \leq \sub{\sigma}{\Dif K^\top} \sub{c}{\Omega}
\snorm{E}_\rho. \label{eq:etaNnorm}
\end{align}
On the one hand, using Equations~\eqref{eq:russ:first} and~\eqref{eq:etaNnorm}, we obtain
\[
\snorm{\R (\etaN)}_{\rho-\delta} \leq \frac{c_R \sub{\sigma}{\Dif K^\top}
\sub{c}{\Omega}}{\gamma \delta^\tau} \snorm{E}_\rho =:
\frac{C_8}{\gamma \delta^\tau} \snorm{E}_\rho,
\]
and on the other hand, using 
Hypothesis $H_4$
and Equations~\eqref{eq:averxiy},~\eqref{eq:russ:first}
and~\eqref{eq:etaLnorm}, we have
\begin{equation}\label{eq:norm:xi}
\begin{split}
\snorm{\xiN}_{\rho- \delta} \leq {} & \frac{C_8 + \sub{\sigma}{T} (\sub{c}{N^\top} \sub{c}{\Omega} \gamma \delta^\tau +
\sub{c}{T} C_8)}{\gamma \delta^{\tau}} \snorm{E}_\rho =: \frac{C_{9}}{\gamma \delta^{\tau}} \snorm{E}_\rho
, \\
\snorm{\xiL}_{\rho-2\delta} \leq {} & \frac{c_R(\sub{c}{N^\top} \sub{c}{\Omega} \gamma \delta^\tau + \sub{c}{T} C_9)}{\gamma^2 \delta^{2\tau}} \snorm{E}_\rho =: \frac{C_{10}}{\gamma^2 \delta^{2\tau}} \snorm{E}_\rho
. 
\end{split}
\end{equation}

The new parameterization $\bar K = K + \Delta K$ and the related objects are controlled
using standard computations.  The first estimate
in~\eqref{eq:iter1} follows directly from $\Delta K= \Dif K \xiL + N \xiN$ and
estimates in~\eqref{eq:norm:xi}:
\[
\snorm{\bar K-K}_{\rho-2\delta} =
\snorm{\Delta K}_{\rho-2\delta} \leq \frac{\sub{\sigma}{\Dif K} C_{10} +
\sub{c}{N} C_9 \gamma \delta^\tau}{\gamma^2 \delta^{2\tau}} \snorm{E}_\rho =: \frac{\hat
C_2}{\gamma^2 \delta^{2\tau}} \snorm{E}_\rho.
\]
Combining this
expression with Cauchy estimates we obtain the first estimate
in~\eqref{eq:newob1}:
\begin{equation}\label{eq:prop:cond2}
\snorm{\Dif \bar K}_{\rho-3 \delta} \leq {} \snorm{\Dif K}_\rho +
\snorm{\Dif \Delta K}_{\rho-3\delta} 
\leq \snorm{\Dif K}_\rho + \frac{n \hat C_2}{\gamma^2
\delta^{2\tau+1}} \snorm{E}_\rho < \sub{\sigma}{\Dif K}.
\end{equation}
The last inequality in the previous computation is obtained by
including this condition
in Hypothesis~\eqref{eq:hyp:small}.
The control of the transposed object in~\eqref{eq:newob2} is analogous:
\begin{equation}\label{eq:prop:cond3}
\snorm{\Dif \bar K^\top}_{\rho-3 \delta} \leq {} 
\snorm{\Dif K^\top}_\rho + \frac{2 n \hat C_2}{\gamma^2
\delta^{2\tau+1}} \snorm{E}_\rho < \sub{\sigma}{\Dif K^\top}.
\end{equation}

To control $\bar B$ and $\avg{\bar T}^{-1}$ we use that for every pair of
matrices $X$ and $Y$
\begin{equation}\label{eq:mat:inv}
Y^{-1} = (I+X^{-1}(Y-X))^{-1} X^{-1}.
\end{equation}
If $\|X^{-1}\|\|Y^{-1}\|< 1$, the Neumann series implies
\begin{equation}\label{eq:mat:inv2}
\snorm{Y^{-1}-X^{-1}} \leq \frac{\snorm{X^{-1}}^2
\snorm{Y-X}}{1-\snorm{X^{-1}} \snorm{Y-X}}.
\end{equation}
First, we use
Equation~\eqref{eq:mat:inv} 
taking
$X=\Dif K^\top \Omega(K) N_0$ and $Y=\Dif \bar K^\top
\Omega(\bar K) N_0$. We obtain the second estimate in~\eqref{eq:iter1}
with
\[
\hat C_3 := 2 \sub{\sigma}{B}^2 C_{11}, \quad
C_{11}:= \sub{c}{N_0} \hat C_2(\sub{\sigma}{\Dif K^\top} \sub{c}{\Dif \Omega} \delta + 2 n
\sub{c}{\Omega}),
\]
where we assumed that (to be included
in~\eqref{eq:hyp:small})
\begin{equation}\label{eq:prop:cond4}
\frac{ 2 \sub{\sigma}{B} C_{11}}{\gamma^2 \delta^{2\tau +1}}
\snorm{E}_\rho < 1.
\end{equation}
This computation allows us to set that in order to
satisfy Equation~\eqref{eq:newob1} we have to include
\begin{equation}\label{eq:prop:cond5}
\snorm{\bar B}_{\rho-3\delta} \leq \snorm{B}_{\rho-3\delta} +
\snorm{\bar B - B}_{\rho-3 \delta}  \leq \snorm{B}_{\rho-3\delta} + \frac{
\hat C_3}{\gamma^2 \delta^{2\tau +1}}
\snorm{E}_\rho < \sub{\sigma}{B},
\end{equation}
into Condition~\eqref{eq:hyp:small}. 

The third expression in~\eqref{eq:iter1} also follows
using Equation~\eqref{eq:mat:inv} with $X=T$ and $Y=\bar T$.
Now we have to control
the new matrices $\bar N(\theta)$ and $\bar A(\theta)$, given by Equations~\eqref{eq:def-N} and~\eqref{eq:def-A} replacing
$K(\theta)$ by $\bar K(\theta)$.
Specifically, we obtain
\[
\snorm{\bar A-A}_{\rho-3\delta} \leq 
\left(\tfrac{n}{2} (\sub{\sigma}{B})^2 \sub{c}{\Dif \Omega} \hat C_2 \delta 
+ \tfrac{n+1}{2} \sub{c}{N_0^\top (\Omega \circ K) N_0} \hat C_3\right)
\frac{\snorm{E}_{\rho}}{\gamma^2 \delta^{2\tau +1}}
=: \frac{C_{12}}{\gamma^2 \delta^{2\tau +1}} {\snorm{E}_{\rho}}, 
\]
and observe that $\snorm{\bar A-A}_{\rho- 3\delta} = \snorm{\bar A^\top -A^\top }_{\rho- 3\delta}$. Moreover
\[
\snorm{\bar N-N}_{\rho-3\delta} \leq \frac{(\sub{\sigma}{\Dif K}
C_{12} + n \hat C_2 \sub{c}{A} + \sub{c}{N_0} \hat C_3) \snorm{E}_{\rho}}{\gamma^2
\delta^{2\tau +1}} =: \frac{C_{13}}{\gamma^2
\delta^{2\tau +1}} \snorm{E}_{\rho}
\]
and
\[
\snorm{\bar N^\top-N^\top}_{\rho-3\delta} \leq 
\frac{(\sub{\sigma}{\Dif K^\top}
C_{12} + 2 n \hat C_2 \sub{c}{A} + n\sub{c}{N_0^\top} \hat C_3) \snorm{E}_{\rho}}{\gamma^2
\delta^{2\tau +1}} =: \frac{C_{13}^*}{\gamma^2
\delta^{2\tau +1}} \snorm{E}_{\rho},
\]
that allow us to compute
\begin{equation}\label{eq:newtwist}
\snorm{\bar T-T}_{\rho-3\delta} \leq
\frac{C_{14}}{\gamma^2 \delta^{2\tau+1}} \snorm{E}_{\rho}, 
\end{equation}
with
\[
C_{14}:= \sub{c}{N^\top} \sub{c}{N} \hat C_2 (\sub{c}{\Omega} \sub{c}{\Dif^2 F} + \sub{c}{\Dif \Omega}
\sub{c}{\Dif F}) \delta + \sub{c}{\Omega} \sub{c}{\Dif F} (\sub{c}{N^\top} C_{13}+\sub{c}{N} C_{13}^*).
\]
Introducing Equation~\eqref{eq:newtwist} into Equation~\eqref{eq:mat:inv2} we
obtain the third estimate in~\eqref{eq:iter1} by defining the constant $\hat
C_4 := 2 \sub{\sigma}{T}^2 C_{14}$ and also the third estimate
in~\eqref{eq:newob2}.  Computations are analogous to those performed to control
the object $\bar B$.  Hence we have to include the condition
\begin{equation}\label{eq:prop:cond7}
\frac{\hat C_4}{\gamma^2 \delta^{2\tau +1}}
\snorm{E}_\rho <
\sub{\sigma}{T}-|\avg{T}^{-1}|
\end{equation}
in~\eqref{eq:hyp:small}. Note that the closure of $\bar
K(\TT^n_{\rho-2\delta})$ lies in $\B$, since
\begin{equation}\label{eq:est:dist}
\mathrm{dist} (\bar K(\TT^n_{\rho-2\delta}),\partial \B) \geq 
\mathrm{dist} (K(\TT^n_{\rho}),\partial \B) -
\snorm{\Delta K}_{\rho-2\delta}  \geq
\mathrm{dist}
(K(\TT^n_{\rho}),\partial \B) - \frac{\hat C_2}{\gamma^2
\delta^{2\tau}} \snorm{E}_\rho > 0. 
\end{equation}
The last inequality is also included in~\eqref{eq:hyp:small}.

Hence, the terms $E_{\mathrm{red}}(\theta) \xi(\theta)$
and $\Omega_0 E_{\mathrm{sym}}(\theta+\omega)\xi(\theta+\omega)$ in
Equation~\eqref{eq:lin:fram} are quadratic in $E(\theta)$. Then, 
using $\Delta K(\theta)=P(\theta)\xi(\theta)$,
Equation~\eqref{eq:lin:fram}, the definition of $\xi(\theta)$, and
also that
\[
(-\Omega_0 P(\theta+\omega)^\top
\Omega(K(\theta+\omega)))^{-1}=P(\theta+\omega)(I-\Omega_0
E_{\mathrm{sym}}(\theta+\omega))^{-1},
\]
it turns out that
\begin{equation}\label{eq:newton:lin:error}
\begin{split}
\Dif F(K(\theta)) \Delta K(\theta) & -
\Delta K (\theta+\omega) + E(\theta) = \\
&P(\theta+\omega)(I-\Omega_0
E_{\mathrm{sym}}(\theta+\omega))^{-1} E_{\mathrm{lin}}(\theta),
\end{split}
\end{equation}
where
\begin{equation}\label{eq:def-Elin}
E_{\mathrm{lin}}(\theta) =
E_{\mathrm{red}}(\theta)
\xi(\theta) + \Omega_0 E_{\mathrm{sym}}(\theta+\omega)
\xi(\theta+\omega) - 
\begin{pmatrix}
0\\
\avg{L(\theta+\omega)^\top
\Omega(K(\theta+\omega)) E(\theta)}
\end{pmatrix}.
\end{equation}
After performing one step of the
Newton method,
the error of invariance
associated to the parameterization
$\bar K = K+\Delta K$ is given by
\begin{equation}\label{eq:def-error2}
\begin{split}
\bar E (\theta)
= {} &
F(K(\theta)+\Delta K(\theta))-K(\theta) -
\Delta K(\theta+\omega) \\
= {} & 
P(\theta+\omega)(I-\Omega_0
E_{\mathrm{sym}}(\theta+\omega))^{-1} E_{\mathrm{lin}}(\theta)
+ \Delta^2 F(\theta),
\end{split}
\end{equation}
where we used Equation~\eqref{eq:newton:lin:error}, and
\[
\begin{split}
\Delta^2 F(\theta)= {} &
F(K(\theta)+\Delta K(\theta)) - F(K(\theta)) - \Dif F(K(\theta))
\Delta K(\theta) \\
= {} & \int_0^1 (1-t) \Dif^2 F(K(\theta) + t \Delta K(\theta))
\Delta K(\theta)^{\otimes 2} \dif t.
\end{split}
\]
The last step of the proof is to see, using the previously computed
expressions, that the new error 
$\bar E (\theta)$
is quadratic in $E(\theta)$.

We use Lemma~\ref{lem-quad-avg} to control the
modulus of the average:
\[
\left|\avg{L(\theta+\omega)^\top \Omega(K(\theta+\omega))
E(\theta)} \right| \leq \bigg(
\frac{2n \sub{c}{\Dif a}}{\delta} + \frac{\sub{c}{\Dif^2 a}}{2}
\bigg) \snorm{E}_\rho^2
\]
and from the expression of $E_{\mathrm{lin}}(\theta)$ in Equation~\eqref{eq:def-Elin} we obtain
\[
\snorm{E_{\mathrm{lin}}}_{\rho-2\delta} \leq \bigg( \frac{(C_3 + C_7)
 \max\{C_9 \gamma \delta^\tau,C_{10}\}}{\gamma^3 \delta^{3 \tau +1}} + \frac{2n \sub{c}{\Dif a}}{\delta} +
\frac{\sub{c}{\Dif^2 a}}{2}
\bigg) \snorm{E}_\rho^2=:
\frac{C_{15}}{\gamma^3 \delta^{3\tau +1}} \snorm{E}_\rho^2.
\]
Using
a Neumann series argument we obtain
\begin{equation}\label{eq:norm:Neumann}
\snorm{(I-\Omega_0 E_{\mathrm{sym}})^{-1}} \leq 
\frac{1}{1-\snorm{\Omega_0 E_{\mathrm{sym}}}}.
\end{equation}
Let us consider, as a hypothesis that we include
in~\eqref{eq:hyp:small}, that
\begin{equation}\label{eq:prop:cond1}
\frac{2C_3}{\gamma \delta^{\tau+1}} \snorm{E}_\rho 
=:\frac{\hat C_1}{\gamma \delta^{\tau+1}} \snorm{E}_\rho 
< 1.
\end{equation}
Using Equations~\eqref{eq:cotanormEsym},~\eqref{eq:norm:Neumann} and~\eqref{eq:prop:cond1},
we obtain $\snorm{(I - \Omega_0 E_{\mathrm{sym}})^{-1}} < 2$.
Then, the new
error of invariance, given by Equation~\eqref{eq:def-error2},
satisfies Condition~\eqref{eq:prop:new:error}:
\begin{equation}\label{eq:prop:e2}
\snorm{\bar E}_{\rho-2\delta} <
\bigg( 2 \sub{c}{P} C_{15} \gamma \delta^{\tau-1} + \frac{1}{2} \sub{c}{\Dif^2 F}
\hat C_2^2\bigg)
\frac{\snorm{E}_\rho^2}
{\gamma^4 \delta^{4\tau}}
=:
\frac{\hat C_{5}
\snorm{E}_\rho^2}
{\gamma^4 \delta^{4\tau}}.
\end{equation}

{We complete} the proof by merging
Equations~\eqref{eq:prop:cond2}, \eqref{eq:prop:cond3}, \eqref{eq:prop:cond5},
\eqref{eq:prop:cond7}, \eqref{eq:est:dist} and \eqref{eq:prop:cond1}, thus
obtaining 
the expression in~\eqref{eq:hyp:prop:exp}
that appears in the statement.
\end{proof}

\begin{proof}[Proof of Theorem~\ref{theo:KAM}]
Let us consider 
the approximate $F$-invariant torus 
$K_{0}:=K$ 
with initial error
$E_{0}:= E$. 
We also introduce $B_{0}:=B$ and $T_{0}:=T$
associated with the initial approximation.
By applying Lemma~\ref{prop:newton1step} recursively
we obtain new objects $K_{s}=K_{s-1}$, $E_{s}=E_{s-1}$, $B_{s}=B_{s-1}$, and
$T_{s}=T_{s-1}$. The domain
of analyticity of these objects is reduced at every step.
To characterize this fact, 
we introduce parameters $a_1>1$,
$a_2>1$, $a_3=3 \frac{a_1}{a_1-1} \frac{a_2}{a_2-1}$ and define
\[
\rho_{0} = \rho, \qquad 
\delta_{0} = \frac{\rho_{0}}{a_3}, \qquad
\rho_{s} = \rho_{s-1} - 3 \delta_{s-1}, \qquad
\delta_{s} = \frac{\delta_{0}}{a_1^s}, \qquad
\rho_{\infty} = \lim_{s \rightarrow \infty} \rho_{s} = 
\frac{\rho_{0}}{a_2}.
\]
We can select the above parameters to optimize the convergence of the KAM
process for a particular problem.  This has been used for example
in~\cite{LlaveR91}.  Due to the quadratic convergence of the scheme, a good
strategy is to optimize the first numbers $\delta_0$, $\delta_1$, \ldots,
$\delta_m$.

We denote the objects at the $s$-step as $K_{s}$, $E_{s}$, $B_{s}$ and $T_{s}$,
respec\-tively. We observe that Condition~\eqref{eq:hyp:small} is required at
every step but the construction has been performed in such a way that we can
control $\snorm{\Dif K_{s}}_{\rho_{s}}$, $\snorm{\Dif K_{s}^\top}_{\rho_{s}}$,
$\snorm{B_{s}}_{\rho_{s}}$, $\mathrm{dist} (K_{s}(\TT^n_{\rho_{s}}),\partial
\B)$, and $|\avg{T_{s}}^{-1}|$ uniformly with respect to $s$, so the constants
that appear in Lemma~\ref{prop:newton1step} are taken to be the same for all
steps by considering the worst value of $\delta_s$, that is,
$\delta_0=\rho_0/a_3$.

Now we proceed by induction. We suppose that we have applied $s$ times
Lemma~\ref{prop:newton1step}, for certain $s\geq 0$, so we have to verify that
we can apply it again. To this end, we first compute the error $E_{s}$ in terms
of $E_{0}$ as follows
\[
\snorm{E_{s}}_{\rho_{s}} < \frac{\hat C_5}{\gamma^4
\delta_{s-1}^{4\tau}} \snorm{E_{s-1}}_{\rho_{s-1}}^2 =
\frac{\hat C_5 a_1^{4\tau (s-1)}}{\gamma^4
\delta_{0}^{4\tau}} \snorm{E_{s-1}}_{\rho_{s-1}}^2
\]
and iterating this sequence backwards (we use that
$1+2+\ldots+2^{s-1}= 2^s-1$
and
$1(s-1)+2(s-2)+2^2(s-3)\ldots+2^{s-2}1=2^s-s-1$) we obtain
\begin{equation}\label{eq:errors}
\snorm{E_{s}}_{\rho_{s}} < 
\bigg(
\frac{a_1^{4\tau} \hat C_5 \snorm{E_{0}}_{\rho_{0}}}{\gamma^4 \delta^{4\tau}_{0}}
\bigg)^{2^s-1} a_1^{-4\tau s} \snorm{E_{0}}_{\rho_{0}}.
\end{equation}

We use this expression in order to verify
Condition~\eqref{eq:hyp:small} so we can perform the step $s+1$.
Before that, in order to 
produce a decreasing sequence of errors, we
assume that
\begin{equation}\label{eq:teo:cond1}
\frac{a_1^{4\tau} \hat C_5 \snorm{E_{0}}_{\rho_{0}}}{\gamma^4
\delta^{4\tau}_{0}} < 1
\end{equation}
thus including this condition in 
Hypothesis~\eqref{eq:hyp:small0}.
Now, to verify the inequality in~\eqref{eq:hyp:small} we
observe that in the expression for $\hat{\mathfrak{C}}_1$, given by Equation~\eqref{eq:hyp:prop:exp},
there are two types of
conditions. On the one hand, we have conditions
like~\eqref{eq:prop:cond1},
depending only on the error $E_{s}$ and $\delta_{s}$. On the other
hand, we have conditions like~\eqref{eq:prop:cond2} depending also on other objects
at the $s$-step. For example, Condition~\eqref{eq:prop:cond1} is
direct using Equation~\eqref{eq:errors} and $\tau\geq n$
\[
\frac{2 C_3 \snorm{E_{s}}_{\rho_{s}}}{\gamma
\delta_{s}^{\tau+1}}
< 
\frac{2C_3 a_1^{(\tau+1)s}}{\gamma \delta_{0}^{\tau+1}}
\bigg(
\frac{a_1^{4\tau} \hat C_5 \snorm{E_{0}}_{\rho_{0}}}{\gamma^4 \delta^{4\tau}_{0}}
\bigg)^{2^s-1} a_1^{-4\tau s} \snorm{E_{0}}_{\rho_{0}} 
< \frac{2C_3}{\gamma \delta_{0}^{\tau+1}}
\snorm{E_{0}}_{\rho_{0}} < 1,
\]
where the last inequality is included in~\eqref{eq:hyp:small0}. If the
condition depends also on other objects, we have to relate it to the
initial one. For example, Condition~\eqref{eq:prop:cond2} requires to
compute, using Equation~\eqref{eq:teo:cond1}, the following
{\allowdisplaybreaks
\begin{align*}
\snorm{\Dif K_{s}}_{\rho_{s}} & + \frac{n \hat C_2
\snorm{E_{s}}_{\rho_{s}}}{\gamma^2
\delta_{s}^{2 \tau+1}}
<
\snorm{\Dif K_{0}}_{\rho_{0}} + 
\sum_{j=0}^{s} 
\frac{n \hat C_2
\snorm{E_{j}}_{\rho_{j}}}{\gamma^2
\delta_{j}^{2 \tau+1}} \\
< {} & \snorm{\Dif K_{0}}_{\rho_{0}} + 
\sum_{j=0}^{\infty} \frac{n \hat C_2 a_1^{(2\tau+1)j}}{\gamma^2
\delta_{0}^{2\tau+1}}
\bigg(
\frac{a_1^{4\tau} \hat C_5 \snorm{E_{0}}_{\rho_{0}}}{\gamma^4
\delta^{4\tau}_{0}}
\bigg)^{2^j-1} a_1^{-4\tau j} \snorm{E_{0}}_{\rho_{0}} \\
< {} & \snorm{\Dif K_{0}}_{\rho_{0}} + \frac{n \hat C_2}{\gamma^2
\delta_{0}^{2\tau+1}} \bigg(\frac{1}{1-a_1^{1-2\tau}}\bigg)
\snorm{E_{0}}_{\rho_{0}} < \sub{\sigma}{\Dif K}. 
\end{align*}}
As usual, the last inequality is included in~\eqref{eq:hyp:small0}.
Then, we perform analogous computations to guarantee the conditions
in~\eqref{eq:hyp:small} and we obtain 
the sufficient condition
\begin{equation}\label{eq:teo:cond2}
\frac{\hat C_8 \snorm{E_0}_{\rho}}{\gamma^2 \delta_0^{2\tau+1}} < 1,
\end{equation}
where $\hat C_8$ is given by
\begin{equation}
\label{eq:hyp:prop:exp2}
\hat C_8 := \max \bigg\{
2C_3 \gamma \delta_{0}^\tau,
\frac{\hat C_6}{1-a_1^{1-2\tau}},
\frac{\hat C_7}{1-a_1^{-2\tau}}
\bigg\}
\end{equation}
with
\begin{equation}\label{eq:hyp:prop:hat6}
\hat C_6 := \max \bigg\{
\frac{n \hat C_2}{\sub{\sigma}{\Dif K}-\snorm{\Dif K_{0}}_{\rho_{0}}},
\frac{2 n \hat C_2}{\sub{\sigma}{\Dif K^\top}-\snorm{\Dif K_{0}^\top}_{\rho_{0}}},
\frac{\hat C_3}{\sub{\sigma}{B} - \snorm{B_{0}}_{\rho_{0}}}, 
\frac{\hat C_4}{\sub{\sigma}{T} - \left|\avg{ T_{0}}^{-1}\right|}
\bigg\}
\end{equation}
and
\[
\hat C_7:=
\frac{\hat C_2 \delta_{0}}{\mathrm{dist} (K_{0}(\TT^n_{\rho_{0}}),\partial \B)}.
\]

Since
Hypotheses
$H_1$ to $H_4$ and 
Condition~\eqref{eq:hyp:small}
are satisfied, we can apply
Lemma~\ref{prop:newton1step} again.
Note that the sequence of
errors satisfies $\snorm{E_{s}}_{\rho_{s}} \rightarrow 0$ when
$s\rightarrow \infty$, so the iterative scheme converges to a true quasi-periodic torus
$K_{\infty}$.
Condition~\eqref{eq:hyp:small0} of the smallness of $\snorm{E_{0}}_{\rho_{0}}$
is obtained by merging Conditions~\eqref{eq:teo:cond1}
and~\eqref{eq:teo:cond2}. Indeed, we have
\begin{equation}\label{eq:Cstar}
\mathfrak{C}_1 := \max \bigg\{(a_1a_3)^{4\tau} \hat C_5,(a_3)^{2\tau+1} \hat
C_8 \gamma^2 \rho_{0}^{2\tau-1}\bigg\},
\end{equation}
where
$\hat C_5$ is given in~\eqref{eq:prop:new:error},
$\hat C_8$ is given in~\eqref{eq:hyp:prop:exp2}
and we used that $\delta_0=\rho_0/a_3$.
Finally, we obtain the constant
\begin{equation}\label{eq:Cstar2}
\mathfrak{C}_2:= a_3^{2 \tau} \hat C_2/(1-a_1^{1-2\tau})
\end{equation}
that appears in~\eqref{eq:teo:close}, controlling that
the torus is close to the initial approximation.
\end{proof}

\section{On the approximation of periodic functions 
using discrete Fourier transform}\label{sec:DFT}

In the core of the computer assisted methodology presented in this work, we
have to bound the error produced when approximating a periodic function by its
discrete Fourier transform.  This is a very natural problem that has been
considered in the approximation theory literature \cite{Rivlin69}. It is well
known that error estimates improve commensurately as the functions become
smoother \cite{Epstein04}. 
We refer the reader to~\cite{Minton13,SchenkelWW00} for problems
where similar ideas have been used.
Motivated by the setting of the present paper, we
address the problem for analytic functions. The estimates presented in this
section improve the ones given in \cite{Epstein04} for this specific case (see
Section~\ref{ssec:Four:1D}). 

\subsection{Notation regarding discretization of the torus and Fourier
transforms}\label{ssec:notation:dft}

Given a function $f: \TT^n \rightarrow \CC$,
we consider its Fourier series
\[
f(\theta)=\sum_{k \in \ZZ^n} f_k \ee^{2 \pi \mathrm{i} k \cdot \theta},
\]
where the Fourier coefficients are given by the \emph{Fourier transform} (${\rm
FT}$)
\begin{equation}\label{eq:fourier:coef}
f_k = \int_{[0,1]^n} f(\theta) \ee^{-2 \pi \mathrm{i} k \cdot \theta} \dif
\theta.
\end{equation}

We consider a sample of points on the regular grid of size
$\NF=(\Ni{1},\ldots,\Ni{n}) \in \NN^n$
\begin{equation}\label{eq:sample:torus}
\theta_j:=(\theta_{j_1},\ldots,\theta_{j_n})=
\left(\frac{j_1}{\Ni{1}},\ldots,
\frac{j_n}{\Ni{n}}\right),
\end{equation}
where $j= (j_1,\ldots,j_n)$, with $0\leq j_\ell < \Ni{\ell}$ and $1\leq \ell
\leq n$. This defines an $n$-dimensional sampling $\{f_j\}$, with
$f_j=f(\theta_j)$. The total number of points is  $\Ntot = \Ni{1} \cdots
\Ni{n}$.  The integrals in Equation~\eqref{eq:fourier:coef} are approximated
using the trapezoidal rule on the regular grid, obtaining the discrete Fourier
transform (DFT)
\[
\tilde f_k= \frac{1}{\Ntot} \sum_{0\leq j < \NF} f_j \mathrm{e}^{-2\pi
\mathrm{i} k \cdot \theta_j},
\] 
where the sum runs over integer subindices $j \in \ZZ^n$ such that $0\leq
j_\ell < \Ni{\ell}$ for $\ell= 1,\dots, n$.  Notice that $\tilde f_k$ is
periodic with respect to the components $k_1,\dots, k_n$ of $k$, with
periods $\Ni{1}, \dots, \Ni{n}$, respectively. 
The periodic function $f$ is approximated by the discrete Fourier approximation
\begin{equation}\label{eq:four:approx}
\tilde f(\theta)= \sum_{k \in \INF} \tilde f_k \ee^{2 \pi \mathrm{i} k \cdot
\theta},
\end{equation}
where $\INF$ is the finite set of multi-indices given by 
\begin{equation}\label{eq:INF}
 \INF= \bigg\{ k \in \ZZ^n \,|\, -\frac{\Ni{\ell}}{2} \leq k_\ell <
 \frac{\Ni{\ell}}{2}, 1\leq \ell \leq n \bigg\}.
\end{equation}

%

Along this section we will use the standard notation $[x]$ for the integer part
of $x$: $[x]=\min\left\{j\in\mathbb Z: x\leq j\right\}$.

\subsection{Error estimates on the approximation of analytic periodic functions}

Motivated by the setting of the present paper, we will work in spaces of
analytic functions on a complex strip of the torus (see
Section~\ref{sec-anal-prelims}), but most of the arguments
can be adapted to other spaces. The main goal is to control
the error between $\tilde f$ and $f$, using suitable norms.  As a
previous step, we establish estimates of the approximation $\tilde f_k$ of
$f_k$. 

\begin{lemma}\label{eq:lemaDTF}
The coefficients of the DFT are obtained from the coefficients of the FT
by
\[
\tilde f_k= \sum_{m\in \ZZ^n} f_{k+\NF(m)},
\]
where $\NF(m)= (\Ni{1}m_1, \dots, \Ni{n} m_n).$
\end{lemma}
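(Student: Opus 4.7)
The plan is to substitute the Fourier series of $f$ into the definition of $\tilde f_k$, swap the finite and infinite sums, and exploit the fact that the discrete characters $e^{2\pi\ii (\ell-k)\cdot\theta_j}$ satisfy a product form of the discrete orthogonality relation on the grid $\{\theta_j\}$.

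First I would write, for each grid point $\theta_j$ from Equation~\eqref{eq:sample:torus},
$f_j = f(\theta_j) = \sum_{\ell\in\ZZ^n} f_\ell \, \ee^{2\pi\ii \ell\cdot\theta_j}$, where absolute convergence is guaranteed by the analyticity assumption on $f$ (its Fourier coefficients decay exponentially on any strip $\TT^n_\rho$). Plugging this into the definition of $\tilde f_k$ gives
\[
\tilde f_k = \frac{1}{\Ntot}\sum_{0\le j<\NF}\sum_{\ell\in\ZZ^n} f_\ell\, \ee^{2\pi\ii(\ell-k)\cdot\theta_j}.
\]
Absolute summability justifies exchanging the two sums, producing $\tilde f_k = \sum_{\ell\in\ZZ^n} f_\ell\, S(\ell-k)$ with
\[
S(p) := \frac{1}{\Ntot}\sum_{0\le j<\NF}\ee^{2\pi\ii p\cdot\theta_j}
= \prod_{s=1}^{n}\frac{1}{\Ni{s}}\sum_{j_s=0}^{\Ni{s}-1}\ee^{2\pi\ii p_s j_s/\Ni{s}}.
\]

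Next I would evaluate $S(p)$ componentwise via the one-dimensional geometric sum identity: $\frac{1}{\Ni{s}}\sum_{j_s=0}^{\Ni{s}-1}\ee^{2\pi\ii p_s j_s/\Ni{s}}$ equals $1$ when $\Ni{s}\mid p_s$ and $0$ otherwise. Hence $S(p)=1$ precisely when $p\in \NF(\ZZ^n) = \Ni{1}\ZZ\times\cdots\times\Ni{n}\ZZ$, and vanishes otherwise. The only surviving terms in $\sum_\ell f_\ell S(\ell-k)$ are those with $\ell-k = \NF(m)$ for some $m\in\ZZ^n$, which yields exactly
\[
\tilde f_k = \sum_{m\in\ZZ^n} f_{k+\NF(m)},
\]
as claimed.

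The only delicate point is legitimising the interchange of sums, but this is immediate once one observes that $\sum_{\ell}|f_\ell|<\infty$ under the hypotheses of the section (analyticity on a strip of positive width implies $|f_\ell|\le \snorm{f}_{\hat\rho}\,\ee^{-2\pi|\ell|_1\hat\rho}$). The rest is bookkeeping with the discrete orthogonality relation, so there is no serious obstacle.
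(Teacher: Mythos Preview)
Your argument is correct and is exactly the standard aliasing computation the paper has in mind; indeed the paper does not give a proof and simply states that ``The proof of Lemma~\ref{eq:lemaDTF} is direct.'' Your justification of the sum interchange via analyticity is appropriate in the context of Section~\ref{sec:DFT}, where $f$ is always assumed analytic on a strip.
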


The proof of Lemma~\ref{eq:lemaDTF} is direct. Using this result we obtain a
bound for the difference between $\tilde f_k$ and $f_k$ as follows:

\begin{proposition}
\label{DFTcoefs} Let $f:\TT^{n}_{\hrho} \to \CC$ be an analytic and bounded
function in the complex strip $\TT^{n}_{\hrho}$ of size $\hrho>0$. Let $\tilde
f$ be the discrete Fourier approximation of $f$ in the regular grid of size
$\NF= (\Ni{1},\dots, \Ni{n}) \in \NN^n$.  Then, for $-\frac{\NF}{2}\leq k <
\frac{\NF}{2}$: 
\[
|\tilde f_k - f_k| \leq s_{\NF}^*(k,\hrho) \snorm{f}_{\hrho}
\]
where 
\[
   s_{\NF}^*(k,\hrho) =  \prod_{\ell= 1}^n \left( \ee^{-\pi\hrho \Ni{\ell}}
   \frac{\ee^{2\pi\hrho(|k_\ell|- \Ni{\ell}/2)}+ \ee^{-2\pi\hrho
   (|k_\ell|-\Ni{\ell}/2)}}{1-\ee^{ -2\pi\hrho \Ni{\ell}}} \right)-
   \ee^{-2\pi\hrho |k|_1}.
\]
\end{proposition}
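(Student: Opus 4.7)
The plan is to combine the aliasing formula from the preceding lemma with standard contour-deformation estimates for Fourier coefficients of analytic periodic functions, and then evaluate the resulting geometric series in closed form.

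First, by the aliasing identity $\tilde f_k = \sum_{m\in\ZZ^n} f_{k+\NF(m)}$ established in the previous lemma, I would write
\[
\tilde f_k - f_k \;=\; \sum_{m\in\ZZ^n\setminus\{0\}} f_{k+\NF(m)}
\]
and reduce the problem to controlling the tail of the Fourier coefficients of $f$.

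Next, I would obtain the exponential decay bound $|f_j|\le \snorm{f}_{\hat\rho}\,\ee^{-2\pi\hat\rho |j|_1}$ for every $j\in\ZZ^n$. This follows from the definition in Equation~\eqref{eq:fourier:coef} by shifting each integration variable $\theta_\ell$ into the complex strip by $-\ii\,\hat\rho\,\sgn(j_\ell)$ (and appealing to $1$-periodicity plus analyticity in $\TT^n_{\hat\rho}$ to justify the contour move, then passing to the boundary by continuity). Each shift picks up a factor $\ee^{-2\pi\hat\rho |j_\ell|}$, and the product yields the claimed bound.

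Plugging this estimate into the tail sum and using that $|\cdot|_1$ is a coordinatewise sum lets me factor:
\[
|\tilde f_k - f_k|\;\le\;\snorm{f}_{\hat\rho}\!\!\sum_{m\in\ZZ^n\setminus\{0\}}\!\!\ee^{-2\pi\hat\rho|k+\NF(m)|_1}
\;=\;\snorm{f}_{\hat\rho}\!\left(\prod_{\ell=1}^n S_\ell(k_\ell) \;-\; \ee^{-2\pi\hat\rho|k|_1}\right),
\]
where $S_\ell(k_\ell)=\sum_{m\in\ZZ}\ee^{-2\pi\hat\rho|k_\ell+\Ni{\ell}m|}$ and the subtracted term accounts for putting the $m=0$ index back.

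The main technical step is to evaluate $S_\ell(k_\ell)$ in closed form and recognize it as the factor appearing in the statement. Since $-\Ni{\ell}/2\le k_\ell<\Ni{\ell}/2$, for $m\ge 1$ one has $k_\ell+\Ni{\ell}m>0$ and for $m\le -1$ one has $k_\ell+\Ni{\ell}m<0$, so the absolute values open with consistent signs and each of the two tails is a geometric series with ratio $\ee^{-2\pi\hat\rho\Ni{\ell}}$. Summing gives
\[
S_\ell(k_\ell)\;=\;\ee^{-2\pi\hat\rho|k_\ell|}\;+\;\frac{\ee^{-2\pi\hat\rho\Ni{\ell}}\bigl(\ee^{2\pi\hat\rho|k_\ell|}+\ee^{-2\pi\hat\rho|k_\ell|}\bigr)}{1-\ee^{-2\pi\hat\rho\Ni{\ell}}},
\]
and a short algebraic manipulation (common denominator, followed by factoring $\ee^{-\pi\hat\rho\Ni{\ell}}$ out of the numerator) rewrites this exactly as $\ee^{-\pi\hat\rho\Ni{\ell}}\bigl(\ee^{2\pi\hat\rho(|k_\ell|-\Ni{\ell}/2)}+\ee^{-2\pi\hat\rho(|k_\ell|-\Ni{\ell}/2)}\bigr)/(1-\ee^{-2\pi\hat\rho\Ni{\ell}})$. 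Taking the product over $\ell$ and subtracting $\ee^{-2\pi\hat\rho|k|_1}$ yields $s^*_{\NF}(k,\hat\rho)$, completing the proof. The only real obstacle is the bookkeeping in the last algebraic step; everything else is routine once the aliasing formula and the contour-shift bound are in place.
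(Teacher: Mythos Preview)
Your proposal is correct and follows essentially the same route as the paper: apply the aliasing identity from Lemma~\ref{eq:lemaDTF}, bound each Fourier coefficient by $\ee^{-2\pi\hat\rho|j|_1}\snorm{f}_{\hat\rho}$, factor the resulting sum over $\ZZ^n$ into one-dimensional sums, and evaluate each as a pair of geometric series. The only cosmetic difference is that the paper reduces $k_\ell$ modulo $\Ni{\ell}$ to a representative $r_\ell\in[0,\Ni{\ell})$ before summing, whereas you split directly at $m=0$, $m\ge 1$, $m\le -1$ using $k_\ell\in[-\Ni{\ell}/2,\Ni{\ell}/2)$; both lead to the same closed form after the algebra you outline.
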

\bproof Let $k\in \ZZ^n$ be a multi-index. From 
Lemma~\ref{eq:lemaDTF} and standard bounds of the Fourier coefficients
of analytic functions, we obtain
\[
|\tilde f_k - f_k| \leq  \sum_{m\in \ZZ^n\backslash\{ 0\}} |f_{k+\NF(m)}| \leq
\sum_{m\in \ZZ^n\backslash\{ 0\}} \ee^{-2\pi\hrho | k + \NF(m)|_1}
\snorm{f}_{\hrho}.
\]
Then we define
\[
s_{\NF}(k,\hrho) =  \sum_{m\in \ZZ^n} \ee^{-2\pi\hrho | k + \NF(m)|_1}, \qquad
s_{\NF}^*(k,\hrho) = \sum_{m\in \ZZ^n\backslash\{ 0\}} \ee^{-2\pi\hrho | k +
\NF(m)|_1}.
\]
Notice that $s_{\NF}(k,\hrho) = s_{\NF}(k',\hrho)$ for every $k'\in \ZZ^n$ such
that $|k_i|= |k_i'|$ for all $i= 1,\dots n$. Then, we write
\[
s_{\NF}(k,\hrho) = \prod_{\ell= 1}^n s_{\Ni{\ell}}(k_\ell,\hrho), 
\]
where
\[
s_{\Ni{\ell}}(k_\ell,\hrho)=\sum_{m\in \ZZ} \ee^{-2\pi\hrho | k_\ell +
\Ni{\ell} m|}.
\]
Then, by defining $r_\ell \equiv k_\ell \pmod{\Ni{\ell}}$ for
$\ell= 1,\dots,n$, we obtain
\[
\begin{split}
 s_{\Ni{\ell}}(k_\ell,\hrho) & = s_{\Ni{\ell}}(r_{\ell},\hrho) =
 \sum_{m_{\ell}\geq 0} \ee^{-2\pi\hrho (r_{\ell} +
 \Ni{\ell} m_{\ell})} + \sum_{m_{\ell}<0}
 \ee^{-2\pi\hrho (-r_{\ell} - \Ni{\ell} m_{\ell})}\\
 & = \frac{\ee^{2\pi\hrho(r_{\ell}-\Ni{\ell})}+ \ee^{-2\pi\hrho
 r_{\ell}}}{1-\ee^{ -2\pi\hrho \Ni{\ell}}}.
\end{split}
\]
The result follows directly from
$
s_{\NF}^*(k,\hrho) =  s_{\NF}(k,\hrho)  - \ee^{-2\pi\hrho |k|_1}.
$
\eproof

Next, we state the main result of this section, that allows us to control the
error between $\tilde f$ and $f$.

\begin{theorem}
\label{ADFT} Let $f:\TT^{n}_{\hrho} \to \CC$ be an analytic and bounded
function in the complex strip $\TT^{n}_{\hrho}$ of size $\hrho>0$.  Let $\tilde
f$ be the discrete Fourier approximation of $f$ in the regular grid of size
$\NF= (\Ni{1},\dots, \Ni{n}) \in \NN^n$.  Then
\[
\snorm{\tilde f-f}_\rho \leq C_{\NF}(\rho, \hrho) \snorm{f}_{\hrho},
\]
for $0\leq \rho < \hrho$,
where $C_{\NF}(\rho, \hrho)= S_\NF^{*1}(\rho,\hrho) + S_\NF^{*2}(\rho,\hrho)  +
T_\NF(\rho,\hrho)$ is given by
\[
S_\NF^{*1}(\rho,\hrho) = 
\prod_{\ell= 1}^n \frac{1}{1-\ee^{-2\pi  \hrho\Ni{\ell} }}
\sum_{\begin{array}{c} \sigma\in \{-1,1\}^n \\ \sigma\neq (1,\dots,1) \end{array}}
\prod_{\ell= 1}^n \ee^{(\sigma_\ell-1)\pi\hrho \Ni{\ell}} \nuNi{\ell}(\sigma_\ell\hrho-\rho),
\]
\[
S_\NF^{*2}(\rho,\hrho) = \prod_{\ell= 1}^n \frac{1}{1-\ee^{-2\pi
\hrho\Ni{\ell} }}  \left(1- \prod_{\ell= 1}^n  \left(1-\ee^{-2\pi
\hrho\Ni{\ell} }\right)\right) \prod_{\ell= 1}^n \nuNi{\ell}(\hrho-\rho)
\]
and
\[
T_\NF(\rho,\hrho)= \left(  \frac{\ee^{2\pi (\hrho-\rho)} + 1}{\ee^{2\pi
(\hrho-\rho)} -1} \right)^n \ \left( 1 - \prod_{\ell= 1}^n \left(1-
\muNi{\ell}(\hrho-\rho)\ e^{-\pi(\hrho-\rho) \Ni{\ell}} \right) \right),
\]
with
\[
\nuNi{\ell}(\delta)= \frac{\ee^{2\pi \delta} + 1 }{\ee^{2\pi \delta} -1}
\left(1- \muNi{\ell}(\delta) \ \ee^{-\pi \delta \Ni{\ell}}\right) \qquad
\mbox{and} \qquad \muNi{\ell}(\delta) = 
\begin{cases}  
\ 1 &\mbox{if $\Ni{\ell}$ is even} \\ \displaystyle \frac{2
\ee^{\pi\delta}}{\ee^{2\pi\delta}+1} &\mbox{if $\Ni{\ell}$ is odd}
\end{cases}.
\]
\end{theorem}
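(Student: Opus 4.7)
The plan is to split the error using the triangle inequality into a Fourier-aliasing part and a Fourier-tail part. Specifically, let $P_{\NF} f(\theta) = \sum_{k \in \INF} f_k \ee^{2\pi \ii k \cdot \theta}$ denote the truncated Fourier series of $f$ on the grid $\INF$. Then
\[
\snorm{\tilde f - f}_\rho \leq \snorm{\tilde f - P_{\NF} f}_\rho + \snorm{P_{\NF} f - f}_\rho,
\]
and I intend to show the first summand is bounded by $(S_\NF^{*1} + S_\NF^{*2})\snorm{f}_{\hrho}$, while the second is bounded by $T_\NF\snorm{f}_{\hrho}$.

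For the tail term, standard Cauchy estimates give $|f_k| \leq \ee^{-2\pi\hrho|k|_1}\snorm{f}_{\hrho}$, so
\[
\snorm{P_\NF f - f}_\rho \leq \snorm{f}_{\hrho} \sum_{k \notin \INF}\ee^{-2\pi(\hrho-\rho)|k|_1} = \snorm{f}_{\hrho} \left(\prod_{\ell}\sum_{k_\ell \in \ZZ} \ee^{-2\pi(\hrho-\rho)|k_\ell|} - \prod_\ell \sum_{k_\ell \in \INF_\ell}\ee^{-2\pi(\hrho-\rho)|k_\ell|}\right).
\]
The first one-dimensional sum equals $\frac{\ee^{2\pi(\hrho-\rho)}+1}{\ee^{2\pi(\hrho-\rho)}-1}$, while the truncated one, computed separately for $\Ni{\ell}$ even and odd (carefully handling the asymmetry of $\INF_\ell$), yields exactly $\nuNi{\ell}(\hrho-\rho)$ with $\muNi{\ell}$ as in the statement. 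Factoring out gives the product form of $T_\NF(\rho,\hrho)$.

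For the first (aliasing) term I will use Proposition~\ref{DFTcoefs} to write
\[
\snorm{\tilde f - P_\NF f}_\rho \leq \snorm{f}_{\hrho}\sum_{k \in \INF} s^*_\NF(k,\hrho)\ee^{2\pi|k|_1\rho},
\]
and expand $s^*_\NF$ using the identity $s_{\Ni{\ell}}(k_\ell,\hrho) = \frac{\ee^{-2\pi\hrho|k_\ell|} + \ee^{-2\pi\hrho(\Ni{\ell}-|k_\ell|)}}{1-\ee^{-2\pi\hrho\Ni{\ell}}}$ derived inside the proof of Proposition~\ref{DFTcoefs}. The sum then factorizes as
\[
\sum_{k \in \INF}\prod_\ell s_{\Ni{\ell}}(k_\ell,\hrho)\ee^{2\pi|k_\ell|\rho} - \sum_{k \in \INF}\ee^{-2\pi(\hrho-\rho)|k|_1}.
\]
Each one-dimensional factor in the product expands as a two-term sum indexed by a sign $\sigma_\ell \in\{-1,1\}$; expanding the product over $\ell$ produces a sum over $\sigma \in \{-1,1\}^n$, with each term being of the form $\prod_\ell \ee^{(\sigma_\ell-1)\pi\hrho\Ni{\ell}}\nuNi{\ell}(\sigma_\ell\hrho-\rho)$. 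Separating the $\sigma=(1,\dots,1)$ term from the others gives precisely $S_\NF^{*1}$ plus a distinguished term $\prod_\ell \frac{\nuNi{\ell}(\hrho-\rho)}{1-\ee^{-2\pi\hrho\Ni{\ell}}}$; subtracting $\prod_\ell \nuNi{\ell}(\hrho-\rho)$ from this distinguished term and algebraically factoring produces $S_\NF^{*2}$.

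The main technical obstacle will be the one-dimensional computation of $\sum_{k_\ell \in \INF_\ell}\ee^{-2\pi\delta|k_\ell|}$, because $\INF_\ell$ is asymmetric around zero: it contains $-\Ni{\ell}/2$ when $\Ni{\ell}$ is even but not its positive counterpart. This forces a case split producing the two definitions of $\muNi{\ell}$; I expect this to be the one place where the formula is genuinely delicate. Once this lemma is in hand, the rest is bookkeeping: separate the $\sigma=(1,\ldots,1)$ term, match the remaining terms to $S_\NF^{*1}$, and combine the residuals of both parts into $S_\NF^{*2}$. Then summing the three contributions proves the theorem.
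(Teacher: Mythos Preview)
Your proposal is correct and follows essentially the same route as the paper: the split into an aliasing part governed by Proposition~\ref{DFTcoefs} and a tail part, the computation of the one-dimensional truncated geometric sum yielding $\nuNi{\ell}(\delta)$ with the even/odd case split for $\muNi{\ell}$, the expansion of $S_\NF$ as a sum over $\sigma\in\{-1,1\}^n$, and the extraction of the $\sigma=(1,\dots,1)$ term to produce $S_\NF^{*2}$ all mirror the paper's argument exactly.
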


\bproof From the definition of the discrete Fourier approximation $\tilde f$ of
$f$, we have
\[
\snorm{\tilde f -f }_\rho \leq \sum_{k \in \INF} |\tilde f_k - f_k| \ee^{2\pi
\rho |k|_1} +   \sum_{k \notin \INF} |f_k| \ee^{2\pi \rho |k|_1},
\]
where $\INF$ is the finite set of multi-indices given by Equation~\eqref{eq:INF}.  From
Proposition~\ref{DFTcoefs} and the
growth rate properties of the Fourier coefficients of an analytic function, we
get 
\[
   \snorm{\tilde f -f }_\rho \leq (S_\NF^{*}(\rho,\hrho) + T_\NF(\rho,\hrho))
   \snorm{f}_{\hrho}, 
\]
where 
\[
   S_\NF^{*}(\rho,\hrho) = \sum_{k \in \INF} s^*_N(k,\hrho) \ee^{2\pi\rho
   |k|_1},
\]
and
\[
	T_\NF(\rho,\hrho)= \sum_{k \notin \INF}  \ee^{2\pi(\rho-\hrho) |k|_1}.
\]

Next, we obtain a computable expression for $T_\NF(\rho,\hrho)$. Notice that 
\[
T_\NF(\rho,\hrho) 
 = \sum_{k \in\ZZ^n}  \ee^{2\pi(\rho-\hrho) |k|_1} - \sum_{k \in \INF}  \ee^{2\pi(\rho-\hrho) |k|_1} \\
 = \left(\frac{\ee^{2\pi (\hrho-\rho)}+1}{\ee^{2\pi (\hrho-\rho)}-1}\right)^n - \prod_{\ell= 1}^n \nuNi{\ell}(\hrho-\rho),
\]
where 
\[
\nuNi{\ell}(\delta)= \sum_{k_\ell=
-\left[\frac{\Ni{\ell}}{2}\right]}^{\left[\frac{\Ni{\ell}-1}2\right]}
\ee^{-2\pi\delta |k_\ell|}.
\]
Then, the formula stated in the proposition follows by 
distinguishing the cases where 
$\Ni{\ell}$ is odd and even. 

To obtain a suitable expression for $S_\NF^*(\rho,\hrho)$, we compute
\[
\begin{split}
S_\NF(\rho,\hrho) &  = \sum_{k \in \INF} s_\NF(k,\hrho)
\ee^{2\pi\rho |k|_1}\\ & = \sum_{k\in \INF} \prod_{\ell= 1}^n \left(
\ee^{-\pi\hrho \Ni{\ell}} \frac{\ee^{2\pi\hrho(|k_\ell|- \Ni{\ell}/2)}+
\ee^{-2\pi\hrho (|k_\ell|-\Ni{\ell}/2)}}{1-\ee^{ -2\pi\hrho \Ni{\ell}}}
\ee^{2\pi\rho |k_\ell|} \right) \\ & = 	\prod_{\ell= 1}^n
\frac{\ee^{-\pi\hrho \Ni{\ell}}}{1-\ee^{ -2\pi\hrho \Ni{\ell}}}
\sum_{\sigma\in\{-1,1\}^n}\sum_{k\in \INF} \prod_{\ell= 1}^n \ee^{-2\pi
(\sigma_\ell \hrho-\rho)  |k_\ell|} \ee^{\pi\sigma_\ell\hrho \Ni{\ell}} \\ &=
\prod_{\ell= 1}^n \frac{\ee^{-\pi\hrho \Ni{\ell}}}{1-\ee^{ -2\pi\hrho
\Ni{\ell}}} \sum_{\sigma\in\{-1,1\}^n}\prod_{\ell= 1}^n
\sum_{-\frac{\Ni{\ell}}{2}\leq k_\ell<\frac{\Ni{\ell}}{2}} \ee^{-2\pi
(\sigma_\ell \hrho-\rho)  |k_\ell|} \ee^{\pi\sigma_\ell\hrho \Ni{\ell}} \\ &=
\prod_{\ell= 1}^n \frac{\ee^{-\pi\hrho \Ni{\ell}}}{1-\ee^{ -2\pi\hrho
\Ni{\ell}}} \sum_{\sigma\in\{-1,1\}^n}\prod_{\ell= 1}^n
\ee^{\pi\sigma_\ell\hrho \Ni{\ell}} \nuNi{\ell}(\sigma_\ell\hrho-\rho).
\end{split}
\]
Finally, we use that 
\[
S_\NF^*(\rho,\hrho) 
 = S_\NF(\rho,\hrho)  - \prod_{\ell= 1}^n \nuNi{\ell}(\hrho-\rho), 
\]
and we decompose the resulting expression in the two functions $S_\NF^{*1}$ and
$S_\NF^{*2}$. \eproof

\begin{remark}
In the above formulae, there are expressions of the form $1- \prod_{\ell= 1}^n
(1-x_\ell)$, where $0<x_\ell<1$ for $\ell= 1\dots n$. In our applications, it
turns out that $0<x_\ell \ll 1$, so we have to be aware of the propagation of
the error when enclosing this expression using interval arithmetics.  To this
end, we will use the formulae
\[
1- \prod_{\ell= 1}^n (1-x_\ell) = \sum_{j= 1}^n (-1)^{j -1} \sum_{\substack{
\ell_1<\dots<\ell_j \\
1\leq l_i \leq n 
}}
x_{\ell_1} \dots x_{\ell_j}.
\]
Notice that the dominant term of the expression is $\displaystyle \sum_{\ell=
1}^n x_\ell$.
\end{remark}

\begin{remark}
It is interesting to characterize the dominant terms in the expression
$C_{\NF}(\rho,\hrho)= S_\NF^{*1}(\rho,\hrho) + S_\NF^{*2}(\rho,\hrho)  +
T_\NF(\rho,\hrho)$.  The dominant term of $S_\NF^{*1}(\rho,\hrho)$ corresponds
to the multi-indices $\sigma\in \{-1,1\}^n$ for which only one component is
$-1$.  Hence, we have
\[
S_\NF^{*1}(\rho,\hrho)   \simeq   \left( \frac{\ee^{2\pi (\hrho- \rho)} + 1
}{\ee^{2\pi (\hrho- \rho)} -1} \right)^{n-1} \left(\frac{\ee^{2\pi (\hrho+
\rho)} +1}{\ee^{2\pi (\hrho+ \rho)}-1} \right)   \ \sum_{\ell= 1}^n
\muNi{\ell}(\hrho-\rho)\ \ee^{-\pi(\hrho-\rho) \Ni{\ell}}.
\]
Then, we observe that the dominant term of $S_\NF^{*2}(\rho,\hrho)$, 
\[
S_\NF^{*2}(\rho,\hrho)   \simeq   \left( \frac{\ee^{2\pi (\hrho- \rho)} + 1
}{\ee^{2\pi (\hrho- \rho)} -1} \right)^{n} \ \sum_{\ell= 1}^n \ee^{-2\pi \hrho
\Ni{\ell}}, 
\]
is much smaller than $S_\NF^{*1}(\rho,\hrho)$. Finally, 
\[
   T_\NF(\rho,\hrho)\simeq \left( \frac{\ee^{2\pi (\hrho- \rho)} + 1
   }{\ee^{2\pi (\hrho- \rho)} -1} \right)^{n}\ \sum_{\ell= 1}^n
   \muNi{\ell}(\hrho-\rho)\ \ee^{-\pi(\hrho-\rho) \Ni{\ell}},
\]
which is of the same order as $S_\NF^{*1}(\rho,\hrho)$.
Putting this together we obtain
\[
C_{\NF}(\rho,\hrho) \simeq \left( \frac{\ee^{2\pi (\hrho- \rho)} + 1
}{\ee^{2\pi (\hrho- \rho)} -1} \right)^{n-1} \left( \frac{\ee^{2\pi (\hrho-
\rho)} + 1 }{\ee^{2\pi (\hrho- \rho)} -1}  + \frac{\ee^{2\pi (\hrho+ \rho)} + 1
}{\ee^{2\pi (\hrho+ \rho)} -1} \right) \  \sum_{\ell= 1}^n
\muNi{\ell}(\hrho-\rho)\ \ee^{-\pi(\hrho-\rho) \Ni{\ell}}\,
\]
and $\muNi{\ell}(\hrho-\rho)\leq 1$ implies
\[
C_{\NF}(\rho,\hrho) 
\simeq O(\ee^{-\pi (\hat \rho-\rho) \min_{\ell} \{\Ni{\ell}\}}).
\]
\end{remark}

\subsection{Comments on the $1$-dimensional case}\label{ssec:Four:1D}

The simplest case $n= 1$ deserves especial attention, as it is a
common situation in the literature. Let us formulate Proposition~\ref{DFTcoefs}
and Theorem~\ref{ADFT} in this case.

\begin{corollary}\label{cor:1d:Fou:k}
Let $f:\TT_{\hrho} \to \CC$ be an analytic and bounded function in the 
complex strip $\TT_{\hrho}$ of size $\hrho>0$. Let $\tilde f$ be the discrete Fourier approximation of $f$ in the regular grid of 
size $\NF \in \NN$. Then, for $k= -\left[\frac{\NF}{2}\right], \dots,  \left[\frac{\NF-1}2\right]$, 
\[
|\tilde f_k - f_k| \leq s_{\NF}^*(k,\hrho) \snorm{f}_{\hrho}
\]
where 
\[
	s_{\NF}^*(k,\hrho) =  \frac{\ee^{-2\pi\hrho \NF}}{1-\ee^{-2\pi\hrho \NF}} \left( \ee^{2\pi\hrho k}+ \ee^{-2\pi\hrho k} \right).
\]
\end{corollary}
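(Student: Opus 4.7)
The cleanest route is to bypass the multi-dimensional Proposition~\ref{DFTcoefs} and derive the bound directly, since in one dimension the aliasing sum collapses to two geometric series. The plan is as follows.

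First, I would invoke Lemma~\ref{eq:lemaDTF} with $n=1$, which gives
\[
\tilde f_k - f_k \;=\; \sum_{m \in \ZZ\setminus\{0\}} f_{k + \NF m}.
\]
Since $f$ is analytic and bounded on $\TT_{\hrho}$, the standard contour-shift estimate yields $|f_j| \le \ee^{-2\pi\hrho |j|}\snorm{f}_{\hrho}$ for every $j \in \ZZ$. Substituting and applying the triangle inequality gives
\[
|\tilde f_k - f_k| \;\le\; \snorm{f}_{\hrho} \sum_{m \in \ZZ\setminus\{0\}} \ee^{-2\pi\hrho |k+\NF m|}.
\]

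Next, I would split the sum according to the sign of $m$. For $k$ in the admissible range $-[\NF/2] \le k \le [(\NF-1)/2]$ one has $-\NF/2 \le k < \NF/2$, so $k+\NF m \ge \NF/2 > 0$ whenever $m \ge 1$ and $k + \NF m \le -\NF/2 < 0$ whenever $m \le -1$. Hence the absolute values unfold cleanly:
\[
\sum_{m\ge 1} \ee^{-2\pi\hrho(k + \NF m)} + \sum_{m\ge 1} \ee^{-2\pi\hrho(-k + \NF m)} \;=\; \bigl(\ee^{-2\pi\hrho k} + \ee^{2\pi\hrho k}\bigr)\sum_{m\ge 1}\ee^{-2\pi\hrho \NF m}.
\]
Summing the geometric series gives $\ee^{-2\pi\hrho \NF}/(1-\ee^{-2\pi\hrho \NF})$, which after combining with the prefactor is exactly $s_{\NF}^*(k,\hrho)$ as stated.

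There is no real obstacle here, only a bookkeeping point: one must verify that the sign of $k + \NF m$ is controlled solely by the sign of $m$ for admissible $k$, so that the absolute values factor and the sum splits symmetrically in $k$. As a sanity check, the same expression can be recovered by specializing the general formula of Proposition~\ref{DFTcoefs} to $n=1$ and simplifying: one places $\ee^{-2\pi\hrho|k|}$ over the common denominator $1-\ee^{-2\pi\hrho\NF}$ and observes that the two surviving terms combine into $\ee^{-2\pi\hrho\NF}(\ee^{2\pi\hrho k}+\ee^{-2\pi\hrho k})/(1-\ee^{-2\pi\hrho\NF})$, matching the direct computation above.
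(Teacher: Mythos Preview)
Your argument is correct and is essentially the same as the paper's. The paper does not give a separate proof of this corollary; it simply presents it as the $n=1$ specialization of Proposition~\ref{DFTcoefs}, whose proof is precisely the aliasing bound from Lemma~\ref{eq:lemaDTF} followed by summing geometric series. Your direct computation---splitting $\sum_{m\neq 0}\ee^{-2\pi\hrho|k+\NF m|}$ according to the sign of $m$ and using that the admissible range of $k$ forces $k+\NF m$ to inherit the sign of $m$---is exactly what that proof collapses to when $n=1$, and your final sanity check via specialization of the general formula is the route the paper takes.
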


\begin{corollary}\label{cor:1d:Fou}
Let $f:\TT_{\hrho} \to \CC$ be an analytic and bounded function in the 
complex strip $\TT_{\hrho}$ of size $\hrho>0$. 
Let $\tilde f$ be the discrete Fourier approximation of $f$ in the regular grid of 
size $\NF$.
Then, for $0\leq \rho < \hrho$, we have
\[
\snorm{\tilde f-f}_\rho \leq C_{\NF}(\rho, \hrho) \snorm{f}_{\hrho},
\]
where  $C_{\NF}(\rho, \hrho)= S_\NF^{*1}(\rho,\hrho) + S_\NF^{*2}(\rho,\hrho)  + T_\NF(\rho,\hrho)$, with 
\[
S_\NF^{*1}(\rho,\hrho) = 
 \frac{\ee^{-2\pi\hrho \NF} }{1-\ee^{-2\pi  \hrho\NF }} \ 
\frac{\ee^{-2\pi (\hrho+\rho)} + 1 }{\ee^{-2\pi (\hrho+\rho)} -1} 
\left(1- \muNi{1} (-\hrho-\rho) \ \ee^{\pi(\hrho+\rho) \NF}\right),
\]
\[
S_\NF^{*2}(\rho,\hrho) = 
\frac{\ee^{-2\pi\hrho \NF} }{1-\ee^{-2\pi  \hrho \NF }} \ 
 \frac{\ee^{2\pi (\hrho-\rho)} + 1 }{\ee^{2\pi (\hrho-\rho)} -1} 
\left(1-\muNi{1}(\hrho-\rho)\ \ee^{-\pi(\hrho-\rho)\NF}\right) 
\]
and
\[
T_\NF(\rho,\hrho)=   \frac{\ee^{2\pi (\hrho-\rho)} + 1}{\ee^{2\pi (\hrho-\rho)} -1} \ \muNi{1}(\hrho-\rho)\ e^{-\pi(\hrho-\rho) \NF} .
\]
\end{corollary}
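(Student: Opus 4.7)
The plan is to obtain Corollary~\ref{cor:1d:Fou} as a direct specialization of Theorem~\ref{ADFT} to the case $n=1$. Since both statements have exactly the same shape, the work reduces to simplifying each of the three building blocks $S_\NF^{*1}$, $S_\NF^{*2}$, and $T_\NF$ when there is only one Fourier index. I would also quickly record Corollary~\ref{cor:1d:Fou:k} as the $n=1$ case of Proposition~\ref{DFTcoefs}; here the single-factor product collapses and $|k|_1=|k|$ yields $s_\NF^*(k,\hrho)=\ee^{-\pi\hrho\NF}(\ee^{2\pi\hrho(|k|-\NF/2)}+\ee^{-2\pi\hrho(|k|-\NF/2)})/(1-\ee^{-2\pi\hrho\NF})-\ee^{-2\pi\hrho|k|}$, and a little algebra on the exponentials of $\pm\hrho\NF$ gives the compact form in the corollary.

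For the main estimate, the first step is to start from the inequality of Theorem~\ref{ADFT} with $n=1$, so that $C_\NF(\rho,\hrho)=S_\NF^{*1}(\rho,\hrho)+S_\NF^{*2}(\rho,\hrho)+T_\NF(\rho,\hrho)$, and then to read off what each factor becomes. In $S_\NF^{*1}$ the sum over $\sigma\in\{-1,1\}^n\setminus\{(1,\dots,1)\}$ contains only the single term $\sigma=-1$, and the outer product over $\ell$ collapses to a single factor, giving
\[
S_\NF^{*1}(\rho,\hrho)=\frac{\ee^{-2\pi\hrho\NF}}{1-\ee^{-2\pi\hrho\NF}}\,\nuNi{1}(-\hrho-\rho).
\]
Expanding $\nuNi{1}(-\hrho-\rho)$ via its definition recovers exactly the expression stated in the corollary. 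For $S_\NF^{*2}$, the factor $1-\prod_\ell(1-\ee^{-2\pi\hrho\Ni{\ell}})$ reduces to $\ee^{-2\pi\hrho\NF}$ and the remaining product is $\nuNi{1}(\hrho-\rho)$; expanding $\nu$ again matches the stated form. For $T_\NF$, the outer exponent becomes $n=1$ and $1-\prod_\ell(1-\muNi{\ell}(\hrho-\rho)\ee^{-\pi(\hrho-\rho)\Ni{\ell}})$ reduces to $\muNi{1}(\hrho-\rho)\ee^{-\pi(\hrho-\rho)\NF}$, giving the stated formula.

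No hidden obstacle is expected: all three simplifications are mechanical, and there is no small-divisor or convergence issue to handle here since the geometric series that appear are automatically convergent for $\rho<\hrho$. The only point that requires a tiny bit of care is the sign of the argument in $\nuNi{1}(-\hrho-\rho)$, where the denominator $\ee^{-2\pi(\hrho+\rho)}-1$ is negative; this is why the ratio $(\ee^{-2\pi(\hrho+\rho)}+1)/(\ee^{-2\pi(\hrho+\rho)}-1)$ appearing in the statement of $S_\NF^{*1}$ is negative, and it cancels against the other negative factor $1-\muNi{1}(-\hrho-\rho)\ee^{\pi(\hrho+\rho)\NF}$ to produce a positive contribution, consistent with $C_\NF(\rho,\hrho)\geq 0$.
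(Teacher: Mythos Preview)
Your approach is correct and is exactly the one the paper takes: the corollary is stated there as the $n=1$ specialization of Theorem~\ref{ADFT}, with no separate proof, and your mechanical reduction of each of $S_\NF^{*1}$, $S_\NF^{*2}$, and $T_\NF$ is precisely what is needed. Your remark on the sign cancellation in $S_\NF^{*1}$ is a nice extra sanity check that the paper does not spell out.
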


In order to compare 
with~\cite{Epstein04} we consider the odd case, 
for
$\NF= 2M+1$.
In this reference, the following uniform bound was obtained
\[
|\tilde f_k - f_k| \leq {\tilde s}_{\NF}^*(\hrho) \snorm{f}_{\hrho},
\]
for $k= -M,\dots, M$, where 
\[
\tilde s_{\NF}^*(\hrho) = \frac{4 \ee^{-2\pi\hrho M}}{\ee^{2\pi\hrho}-1}.
\]
Let us now compare $s_{\NF}^*(k,\hrho)$ with $\tilde s_{\NF}^*(\hrho)$, for $k= -M,\dots, M$:
\[	
\frac{s_{\NF}^*(k,\hrho)}{\tilde s_{\NF}^*(\hrho)} \leq 
\frac{s_{\NF}^*(M,\hrho)}{\tilde s_{\NF}^*(\hrho)} = 
\frac{1}{4} \left( 1 - \ee^{-2\pi\hrho} \right) \frac{1+\ee^{-2\pi\hrho (\NF-1)}}{1-\ee^{-2\pi\hrho \NF}} \leq \frac{1}{4}.
\]
Notice that the estimates produced in Corollary~\ref{cor:1d:Fou:k} are at least
four times better than the estimates produced in \cite{Epstein04}.

Now we consider Corollary~\ref{cor:1d:Fou} in
the case $\rho=0$. First, we observe that
$\muNi{1}(-\delta)= \muNi{1}(\delta)$, so we get
\[
\snorm{\tilde f-f}_0 \leq C_{\NF}(0, \hrho) \snorm{f}_{\hrho},
\]
with 
\[
C_{\NF}(0, \hrho) = 2\ \frac{\ee^{2\pi \hrho} + 1}{\ee^{2\pi \hrho} -1} \ \muNi{1}(\hrho)\ e^{-\pi\hrho \NF}.
\]
In the odd case, with $\NF= 2M+1$, this expression reads as follows:
\[
C_{\NF}(0, \hrho) = \frac{4\ \ee^{-2\pi\hrho M}}{\left( \ee^{2\pi\hrho} -1\right) \left(1-\ee^{-2\pi\hrho \NF} \right)}.
\]
It is worth 
pointing out
that the best uniform approximation $p^*$ of the form 
$
p(\theta)= \sum_{-M \leq k \leq M} \tilde p_k \ee^{2 \pi \mathrm{i} k \cdot \theta}
$
satisfies (c.f.~\cite{Epstein04})
\[
\snorm{p^*-f}_0 \leq \frac{2  \ee^{-2\pi\hrho M}}{\left( \ee^{2\pi\hrho} -1\right)} \snorm{f}_{\hrho}.
\]
Hence, the discrete Fourier approximation is very close to optimal,
since the corresponding error (approximately) doubles 
the less possible error. 

\subsection{Matrices of periodic functions}\label{ssec:dft:matrices}

In this section we consider some extensions of Theorem~\ref{ADFT} to deal
with matrix functions $A: \TT^n \rightarrow \CC^{m_1 \times m_2}$. Our goal
is to control the propagation of the error when we perform matrix operations.
Specifically, we are interested in the study of products and inverses,
but the ideas
given below can be adapted to control other operations if necessary.

The first result is obtained directly from Theorem~\ref{ADFT}:

\begin{corollary}\label{cor:matrix:multi}
Let us consider two matrix functions $A: \TT^n \rightarrow \CC^{m_1 \times
m_2}$, and $B : \TT^n \rightarrow \CC^{m_2 \times m_3}$, such that their
entries are analytic and bounded functions in the
complex strip $\TT^n_{\hat \rho}$ of size $\hat \rho >0$.  We denote by $A B$
the product matrix and $\smash{\widetilde{A B}}$ the corresponding
approximation given by DFT. Given a grid of size $\NF=(\Ni{1},\ldots,\Ni{n})$,
we evaluate $A$ and $B$ in the grid, and we interpolate the points $A
B(\theta_j)=A(\theta_j) B(\theta_j)$.  Then, we have
\begin{equation}\label{eq:dtf:matrix}
\snorm{AB-\widetilde{A B}}_\rho \leq C_{\NF}(\rho,\hat \rho) \snorm{A}_{\hat \rho}
\snorm{B}_{\hat \rho}
\end{equation}
for every $0 \leq \rho < \hat \rho$,
where $C_{\NF}(\rho,\hat \rho)$ is given in Theorem~\ref{ADFT}.
\end{corollary}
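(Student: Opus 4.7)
The plan is to reduce the matrix statement to the scalar one by applying Theorem~\ref{ADFT} entry-wise, then use sub-multiplicativity of the matrix norm defined in Equation~\eqref{eq:norm2} to split $\snorm{AB}_{\hat\rho}$ into $\snorm{A}_{\hat\rho}\snorm{B}_{\hat\rho}$.

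More concretely, since we evaluate $A$ and $B$ on the grid and take $AB(\theta_j)=A(\theta_j)B(\theta_j)$ before running the DFT, the approximation $\widetilde{AB}$ is exactly the discrete Fourier approximation of the analytic matrix function $AB:\TT^n_{\hat\rho}\to\CC^{m_1\times m_3}$. In particular, for each pair of indices $(i,k)$, $(\widetilde{AB})_{ik}$ is the scalar discrete Fourier approximation of $(AB)_{ik}$ on the grid of size $\NF$. Since each $(AB)_{ik}$ is analytic and bounded on $\TT^n_{\hat\rho}$, Theorem~\ref{ADFT} applied component-wise gives
\[
\snorm{(\widetilde{AB})_{ik}-(AB)_{ik}}_\rho \leq C_{\NF}(\rho,\hat\rho)\,\snorm{(AB)_{ik}}_{\hat\rho}
\]
for every $0\leq\rho<\hat\rho$ and every pair $(i,k)$.

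Summing over $k$ and taking the maximum over $i$ (which is exactly the matrix norm introduced in Equation~\eqref{eq:norm2}) yields
\[
\snorm{\widetilde{AB}-AB}_\rho
= \max_i \sum_k \snorm{(\widetilde{AB})_{ik}-(AB)_{ik}}_\rho
\leq C_{\NF}(\rho,\hat\rho)\,\max_i \sum_k \snorm{(AB)_{ik}}_{\hat\rho}
= C_{\NF}(\rho,\hat\rho)\,\snorm{AB}_{\hat\rho}.
\]
To conclude the stated bound~\eqref{eq:dtf:matrix}, it remains to check that the norm in Equation~\eqref{eq:norm2} is sub-multiplicative, i.e. $\snorm{AB}_{\hat\rho}\leq \snorm{A}_{\hat\rho}\snorm{B}_{\hat\rho}$.

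For this last step, fix $\theta\in\TT^n_{\hat\rho}$ and apply the standard row-sum (infinity) matrix-norm inequality pointwise: $\sum_k\abs{(AB)_{ik}(\theta)}\leq \sum_j\abs{A_{ij}(\theta)}\sum_k\abs{B_{jk}(\theta)}$, bound each factor by its supremum, and take the maximum over $i$. Since there is no real obstacle here beyond this routine verification, the corollary follows immediately by chaining the two inequalities; the only thing to keep an eye on is that the matrix norm used throughout the paper is precisely the one that makes this sub-multiplicativity hold, which is exactly the choice in Equation~\eqref{eq:norm2}.
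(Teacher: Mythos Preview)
Your approach is correct and is exactly what the paper intends: it states only that the corollary is ``obtained directly from Theorem~\ref{ADFT}'', and your entry-wise application of Theorem~\ref{ADFT} followed by sub-multiplicativity of the matrix norm in Equation~\eqref{eq:norm2} is the natural way to make this precise. One small point to tighten in your last paragraph: the matrix norm is $\max_i\sum_k\snorm{(AB)_{ik}}_{\hat\rho}$ with the supremum over $\theta$ taken \emph{inside} the sum, so fixing $\theta$ first and summing over $k$ bounds $\sup_\theta\sum_k|(AB)_{ik}(\theta)|$, which is the wrong quantity; instead bound each entry by $\snorm{(AB)_{ik}}_{\hat\rho}\leq\sum_j\snorm{A_{ij}}_{\hat\rho}\snorm{B_{jk}}_{\hat\rho}$ and then sum over $k$, which gives the desired $\snorm{AB}_{\hat\rho}\leq\snorm{A}_{\hat\rho}\snorm{B}_{\hat\rho}$.
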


Notice that Corollary~\ref{cor:matrix:multi} is useful to control
the product of approximated objects. If $\tilde A$ and
$\tilde B$ are the corresponding approximations of $A$ and $B$ given by DFT,
then 
\begin{equation}\label{eq:dtf:matrix:trunc}
\snorm{\tilde A \tilde B-\widetilde{\tilde A \tilde B}}_\rho 
\leq C_{\NF}(\rho,\hat \rho) \snorm{\tilde A}_{\hat \rho} \snorm{\tilde B}_{\hat \rho} 
\leq C_{\NF}(\rho,\hat \rho) \snorm{\tilde A}_{F,\hat \rho} \snorm{\tilde B}_{F,\hat \rho} 
\end{equation}
for every $0 \leq \rho < \hat \rho$. 
Notice that since $\tilde A$ and $\tilde B$
are Fourier series with finite support, then
it is interesting to control Equation~\eqref{eq:dtf:matrix:trunc}
using Fourier norms.

The second result allows us to control the inverse of a matrix using the
discrete Fourier approximation:

\begin{corollary}\label{cor:matrix:inv}
Let us consider a matrix function $A: \TT^n \rightarrow \CC^{m \times m}$ whose
entries are analytic and bounded functions in the
complex strip $\TT^n_{\hat \rho}$ of size $\hat \rho >0$.  Given a grid of size
$\NF=(\Ni{1},\ldots,\Ni{n})$, we evaluate $A$ in the grid and compute
the inverses $X(\theta_j)=A(\theta_j)^{-1}$.  Then, if $\tilde X$ is the
corresponding discrete Fourier approximation associated to the sample
$X(\theta_j)$, the error $E(\theta)=I_m-A(\theta) \tilde
X(\theta)$ satisfies
\begin{equation}\label{eq:error:inverse}
\snorm{E}_\rho \leq C_{\NF} (\rho,\hat \rho) \snorm{A}_{\hat \rho} \snorm{\tilde
X}_{\hat \rho},
\end{equation}
for $0 \leq \rho < \hat \rho$. Moreover, if $\snorm{E}_\rho<1$, there exists an
analytic inverse $A^{-1}: \TT^n \rightarrow \CC^{m \times m}$
satisfying
\begin{equation}\label{eq:inverese:A}
\snorm{A^{-1}-\tilde X}_\rho \leq \frac{\snorm{\tilde X}_{\hat \rho}
\snorm{E}_\rho}{1-\snorm{E}_\rho}.
\end{equation}
\end{corollary}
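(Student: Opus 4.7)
The plan for the first bound is to exploit the interpolation property of the trigonometric approximation: since $\tilde X$ is obtained by DFT from the sample $\{X(\theta_j)\}=\{A(\theta_j)^{-1}\}$, evaluation at nodes recovers the original sample, so $A(\theta_j)\tilde X(\theta_j)=I_m$ at every grid point. Consequently, the DFT approximation of the pointwise product $A\tilde X$ sampled on the grid is the trigonometric interpolant of the constant matrix $I_m$, which is simply $I_m$ itself. Corollary~\ref{cor:matrix:multi} applied to the pair $(A,\tilde X)$ (note that $\tilde X$ is a trigonometric polynomial, hence entire, so $\snorm{\tilde X}_{\hat\rho}$ is finite) then yields
\[
\snorm{E}_\rho=\snorm{I_m-A\tilde X}_\rho=\snorm{\widetilde{A\tilde X}-A\tilde X}_\rho\leq C_{\NF}(\rho,\hat\rho)\snorm{A}_{\hat\rho}\snorm{\tilde X}_{\hat\rho},
\]
which is exactly~\eqref{eq:error:inverse}.

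For the second bound, I would invoke the Neumann series under the assumption $\snorm{E}_\rho<1$. Since $A\tilde X=I_m-E$ pointwise on $\TT^n_\rho$, the series $\sum_{k\geq 0}E^k$ converges uniformly on $\TT^n_\rho$ and defines an analytic matrix-valued function $(I_m-E)^{-1}$ satisfying $\snorm{(I_m-E)^{-1}}_\rho\leq (1-\snorm{E}_\rho)^{-1}$. Post-multiplying $A\tilde X=I_m-E$ by $(I_m-E)^{-1}$ shows that $A(\theta)$ is invertible at each $\theta\in\TT^n_\rho$ with $A^{-1}=\tilde X(I_m-E)^{-1}$, which is therefore analytic on $\TT^n_\rho$. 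A one-line computation gives $A^{-1}-\tilde X=\tilde X\bigl[(I_m-E)^{-1}-I_m\bigr]=\tilde X(I_m-E)^{-1}E$, and passing to norms, together with $\snorm{\tilde X}_\rho\leq\snorm{\tilde X}_{\hat\rho}$, yields the bound~\eqref{eq:inverese:A}.

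The only subtlety worth spelling out is the interpolation identity $\tilde X(\theta_j)=X(\theta_j)$ at the grid nodes, which is what collapses $\widetilde{A\tilde X}$ to the constant $I_m$. It follows from plugging the orthogonality relation $\sum_{k\in\INF}\ee^{2\pi\ii k\cdot(\theta_{j'}-\theta_j)}=\Ntot\,\delta_{j,j'}$ into the definition $\tilde X_k=\Ntot^{-1}\sum_j X(\theta_j)\ee^{-2\pi\ii k\cdot\theta_j}$ of the DFT coefficients. Beyond this remark, the argument is essentially a direct application of Corollary~\ref{cor:matrix:multi} followed by a standard Neumann series estimate, so I do not anticipate any serious obstacle; the main work has already been done in Theorem~\ref{ADFT}.
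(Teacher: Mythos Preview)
Your proof is correct and follows essentially the same approach as the paper: both arguments hinge on the identity $\tilde X(\theta_j)=X(\theta_j)$ to reduce $\widetilde{A\tilde X}$ to $I_m$, invoke Corollary~\ref{cor:matrix:multi}, and then finish with the Neumann series representation $A^{-1}=\tilde X(I_m-E)^{-1}$. Your version is slightly more explicit in justifying the interpolation identity via the orthogonality relation, which the paper takes for granted.
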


\begin{proof}
To obtain Equation~\eqref{eq:error:inverse} we observe that if $\widetilde{ A \tilde X}$
is the discrete Fourier approximation of $A \tilde X$, then it turns out that
\[
(A \tilde X)(\theta_j)=A(\theta_j) \tilde X(\theta_j)=I_m
\]
for all points in the grid. This implies that $\widetilde{ A \tilde X}=I_m$
and we end up with
\[
\snorm{E}_\rho = \snorm{I_m - A \tilde X}_\rho = \snorm{\widetilde{A \tilde X}
- A \tilde X}_\rho
\]
and Inequality~\eqref{eq:error:inverse} follows applying
Corollary~\ref{cor:matrix:multi}. 
Inequality~\eqref{eq:inverese:A} follows from the expression
$E=I_m - A \tilde X$, simply writing $A^{-1} = \tilde X (I_m - E)^{-1}$ and
using a Neumann series argument. 
\end{proof}

\section{Dealing with the small divisors}\label{sec:CAPlemmas}

In this Section we discuss two technical auxiliary results that
play a fundamental role in KAM theory: the characte\-rization
of the Diophantine constants $(\gamma, \tau)$ and the computation
of the R\"ussmann constant $c_R$. 
In Section~\ref{ssec:diophantine} 
we propose a general method to assign Diophantine constants to a given interval vector of frequencies, such that
the corresponding set of Diophantine vectors has positive measure.
In Section~\ref{ssec:superrussmann} we revisit the classic R\"ussmann estimates.
To take into account the effect of small divisors, we compute
the first elements explicitly and
then we control the remaining tail analytically. In this way,
with the help of the computer, we obtain sharper estimates than
in the classic literature.

\subsection{On the characterization of Diophantine
constants}\label{ssec:diophantine}

A fundamental hypothesis of Theorem~\ref{theo:KAM} is the fact that $\omega \in \RR^n$
satisfies Diophan\-tine conditions. 
To ensure it, we enclose $\omega$ with an interval vector
$\iomega$ and we look for constants $(\gamma,\tau)$ such that $\iomega$
contains $(\gamma, \tau)$-Diophantine vectors.
The estimates presented in this section
are based on two elementary observations. First, that we only need to give
a lower bound of the measure of vectors   $\omega\in \iomega$ satisfying 
$|k\cdot \omega - m| \geq \gamma |k|_1^{-\tau}$, for every $k \in \ZZ^n \backslash\{0\}$ and $m \in
\ZZ$. Second, that 
this lower bound is obtained by splitting the
computations in two parts: the low resonances are checked rigorously with the
help of the computer, while the measure of the high resonances are bounded
analytically.

Consider an interval vector of the form $\iomega=\prod_{i=1}^n [a_i, b_i]$, and Diophantine constants $(\gamma,\tau)$.
For each index $k\in \ZZ^n\backslash \{0\}$ we define the $k$-resonant set of type $(\gamma,\tau)$ as 
\[
\mathrm{Res}_k(\iomega, \gamma, \tau)=
\bigcup_{m \in \ZZ}
\bigg\{\omega \in \iomega : |k\cdot \omega -m| < \frac{\gamma}{|k|_1^\tau} \bigg\}, 
\]
so that the resonant set of type $(\gamma,\tau)$ is 
\[
	\mathrm{Res}(\iomega, \gamma, \tau) = \bigcup_{k\in\mathbb Z^n\backslash\{0\} } \mathrm{Res}_k(\iomega,\gamma, \tau).
\]
The relative measure  of the set of $(\gamma,\tau)$-Diophantine vectors
in  $\iomega$ is
\begin{equation}
\label{def: relative measure}
	p(\omega,\gamma,\tau) = 
	1- \frac{\meas\left(\mathrm{Res}(\iomega,\gamma,\tau)\right)}{\meas (\iomega)},
\end{equation}
where $\meas (A)$ stands for the Lebesgue measure of a Borel set $A$. 
Our goal is to obtain positive lower bounds of $p(\omega,\gamma,\tau)$. To do so, we control the resonant set by fixing $M$ sufficiently big, and using the decomposition
\[
\mathrm{Res}( \iomega, \gamma,\tau) = \mathrm{Res}_{\leq M} (\iomega, \gamma, \tau)\cup \mathrm{Res}_{> M}
(\iomega, \gamma, \tau), 
\]
where $\mathrm{Res}_{\leq M} (\iomega, \gamma, \tau)$ and $\mathrm{Res}_{> M}
(\iomega, \gamma,\tau)$ are, respectively, the sets of resonances with index
$k$ satisfying $|k|_1 \leq M$ and $|k|_1 > M$.  By choosing $\gamma$ sufficienty small, we get 
$\mathrm{Res}_{\leq M} (\iomega, \gamma, \tau) = \emptyset$, and then we have to get an  upper bound of the measure of
$\mathrm{Res}_{> M} (\iomega, \gamma, \tau)$. These arguments are the core of the proof of the following proposition. 

\begin{proposition}\label{cor:Dioph}
Let $\iomega=\prod_{i=1}^n [a_i, b_i]$ be an interval vector, whose diameter is $\mathrm{diam}(\iomega)=\sqrt{\sum_{i=1}^n(b_i-a_i)^2}$.
Given $M\geq n$, we assume that  for any $\omega\in\iomega$, 
$k\in \ZZ^n$ such that $0<|k|_1\leq M$, and $m \in \ZZ$, we have $k\cdot \omega-m\neq 0$.
For any  $\tau>n$, we define 
\[
	\gamma_M(\iomega,\tau)=  \min \{ |k \cdot \omega-m| |k|_1^{\tau} : \, \omega \in \iomega, \, 0<|k|_1\leq M, \, m \in \ZZ  \}.
\]
Then, for any positive $\gamma\leq \gamma_M(\iomega,\tau)$, we have 
\begin{equation}
\label{eq: lower bound}
	p(\iomega,\gamma,\tau) 
	>      1- \frac{C(\iomega,n) \gamma}{(\tau-n) M^{\tau-n}} 
	\geq 1- \frac{C(\iomega,n) \gamma_M(\iomega,\tau)}{(\tau-n) M^{\tau-n}}, 
\end{equation}
where 
\[
	C(\iomega,n)= \frac{2^{2n}}{(n-1)!} \frac{(\mathrm{diam}(\iomega))^n}{\meas(\iomega)}.
\]
Moreover, the equation for $\tau$ 
\begin{equation}
\label{eq: bound resonant}
	1-\frac{C(\iomega,n) \gamma_M(\iomega,\tau)}{(\tau-n) M^{\tau-n}} = 0
\end{equation}
has a unique solution $\tau_M(\iomega)$, 
for which we define $\gamma_{M}(\iomega)= \gamma_{M}(\iomega,\tau_{M}(\iomega))$. 
As a consequence, for any pair $(\gamma,\tau)$ with $\tau\geq \tau_{M}(\iomega)$ and 
$\gamma\leq \gamma_{M}(\iomega)$, we have  $p(\iomega,\gamma,\tau)>0$.
\end{proposition}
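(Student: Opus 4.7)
The plan is to split $\mathrm{Res}(\iomega,\gamma,\tau)$ into the low-frequency part ($|k|_1\leq M$) and the high-frequency part ($|k|_1>M$) and handle each by different means. For the low-frequency part, the hypothesis $\gamma\leq\gamma_M(\iomega,\tau)$ directly forces $\mathrm{Res}_{\leq M}(\iomega,\gamma,\tau)=\emptyset$: by definition of $\gamma_M(\iomega,\tau)$, every $k$ with $0<|k|_1\leq M$, every $\omega\in\iomega$, and every $m\in\ZZ$ satisfy $|k\cdot\omega-m|\geq\gamma_M(\iomega,\tau)/|k|_1^\tau\geq\gamma/|k|_1^\tau$; the non-vanishing assumption $k\cdot\omega-m\neq 0$ on the same set ensures this minimum is attained and strictly positive on the compact domain.

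For the high-frequency part, I would fix $k\in\ZZ^n$ with $|k|_1>M$ and estimate $\meas(\mathrm{Res}_k)$ by Fubini. Choose $i^*$ with $|k_{i^*}|=\max_i|k_i|$, so that $|k_{i^*}|\geq|k|_1/n$. For each fixed tuple of the remaining coordinates, the slice of $\{|k\cdot\omega-m|<\gamma/|k|_1^\tau\}$ in the $\omega_{i^*}$-direction is a disjoint union of intervals of length $2\gamma/(|k_{i^*}|\,|k|_1^\tau)$, and at most $|k_{i^*}|(b_{i^*}-a_{i^*})+1$ integers $m$ contribute. Using $\prod_{i\neq i^*}(b_i-a_i)\leq(\mathrm{diam}(\iomega))^{n-1}$, $\meas(\iomega)\leq(\mathrm{diam}(\iomega))^n$, and the hypothesis $|k|_1>M\geq n$ to absorb the edge ``$+1$'' term, I expect a bound of the shape $\meas(\mathrm{Res}_k)\leq c_n\,\gamma\,(\mathrm{diam}(\iomega))^n/|k|_1^{\tau}$ for an explicit dimensional $c_n$.

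Summing this over $|k|_1>M$ using the lattice-point count $\#\{k\in\ZZ^n:|k|_1=j\}\leq 2^n\binom{j+n-1}{n-1}\leq 2^{2n-1}j^{n-1}/(n-1)!$ together with $\sum_{j>M}j^{n-1-\tau}\leq 1/((\tau-n)M^{\tau-n})$, and then dividing by $\meas(\iomega)$, I obtain the claimed inequality \eqref{eq: lower bound} with constant $C(\iomega,n)=2^{2n}(\mathrm{diam}(\iomega))^n/((n-1)!\,\meas(\iomega))$, provided the dimensional pre-factors from the single-slab step combine with $2^{2n-1}$ to yield $2^{2n}$.

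Finally, for the uniqueness of $\tau_M(\iomega)$, the key observation is that $\gamma_M(\iomega,\tau)=\min_{0<|k|_1\leq M}d(k)\,|k|_1^\tau$ with $d(k):=\min_{\omega,m}|k\cdot\omega-m|>0$ on a finite index set, so
\[
\frac{\gamma_M(\iomega,\tau)}{M^{\tau-n}}=M^n\min_{0<|k|_1\leq M}d(k)\Big(\frac{|k|_1}{M}\Big)^{\tau}
\]
is non-increasing in $\tau$ as the minimum of non-increasing functions (since $|k|_1/M\leq 1$). Dividing by the strictly increasing factor $\tau-n$ then makes $\phi(\tau):=C(\iomega,n)\gamma_M(\iomega,\tau)/((\tau-n)M^{\tau-n})$ strictly decreasing and continuous on $(n,\infty)$, diverging as $\tau\to n^+$ and tending to $0$ as $\tau\to\infty$, which gives a unique $\tau_M(\iomega)$; the final inequality $p(\iomega,\gamma,\tau)>0$ for $\tau\geq\tau_M(\iomega)$ and $\gamma\leq\gamma_M(\iomega)$ then reads off from \eqref{eq: lower bound}. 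The main obstacle throughout will be the careful constant-chasing in the single-slab estimate, especially absorbing the edge ``$+1$'' and the factor $n$ from $|k_{i^*}|\geq|k|_1/n$ into the clean exponent $2^{2n}/(n-1)!$; this absorption is what essentially dictates the hypothesis $M\geq n$.
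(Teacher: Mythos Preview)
Your proposal is correct and follows essentially the same architecture as the paper's proof: both split the resonant set into $|k|_1\le M$ and $|k|_1>M$, kill the first part by the choice $\gamma\le\gamma_M(\iomega,\tau)$, measure-bound the second, and then use the identical rewriting $\gamma_M(\iomega,\tau)/M^{\tau-n}=M^n\min_k d(k)(|k|_1/M)^\tau$ to get monotonicity and hence uniqueness of $\tau_M(\iomega)$.

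The one substantive difference is in the single-$k$ slab estimate. The paper does not carry out your Fubini computation; it simply quotes the bound
\[
\meas\big(\mathrm{Res}_k(\iomega,\gamma,\tau)\big)\le 2\gamma\,|k|_1^{-\tau}\,(\mathrm{diam}(\iomega))^n
\]
from the proof of Lemma~2.11 in \cite{Llave01}, which already has the clean constant $2$. Your Fubini route is the right mechanism, but as you yourself note, getting exactly $c_n=2$ out of it is delicate: the edge ``$+1$'' and the factor $n$ from $|k_{i^*}|\ge|k|_1/n$ do not cancel on the nose at the single-$k$ level, and you would have to exploit the extra $|k|_1^{-1}$ decay of the edge contribution when summing over $|k|_1>M$ (rather than absorbing it into $c_n$). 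The paper avoids this bookkeeping by citing the packaged estimate. Either way one lands on the same final constant $C(\iomega,n)=2^{2n}(\mathrm{diam}(\iomega))^n/\big((n-1)!\,\meas(\iomega)\big)$, and your lattice count $\#\{|k|_1=j\}\le 2^{2n-1}j^{n-1}/(n-1)!$ together with $\sum_{j>M}j^{n-1-\tau}<1/((\tau-n)M^{\tau-n})$ matches the paper's tail estimate exactly.
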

\begin{proof}
Notice that, for  $\gamma\leq \gamma_M(\iomega,\tau)$, we obtain $\mathrm{Res}(\iomega, \gamma, \tau) = \mathrm{Res}_{>M} (\iomega, \gamma, \tau)$, since $\mathrm{Res}_{\leq M} (\iomega, \gamma, \tau) = \emptyset$.
In order to get an upper bound of $\meas (\mathrm{Res}_{> M}(\iomega, \gamma, \tau ))$, we use the elementary estimate 
\[
\meas (\mathrm{Res}_k(\iomega, \gamma, \tau ))\leq 2\gamma |k|_1^{-\tau} (\mathrm{diam}(\iomega))^{n}, 
\]
for any $k\in\ZZ^n\setminus\{0\}$ (see the proof of Lemma 2.11 of~\cite{Llave01}).
Consequently,
\begin{equation}\label{eq:reso:cont}
\meas (\mathrm{Res}_{> M} (\iomega, \gamma, \tau))  
\leq 2 (\mathrm{diam}(\iomega))^{n} \gamma \sum_{|k|_1> M} |k|_1^{-\tau}
< \frac{2^{2n} (\mathrm{diam}(\iomega))^{n}}{(n-1)! (\tau-n) M^{\tau-n}}.
\end{equation}
The lower bound \eqref{eq: lower bound} follows immediately. 

Let us consider the function $\hat p:(n,\infty)\to \RR$ defined by 
\begin{equation}
\hat p(\tau)= 1-\frac{C(\iomega,n) \gamma_M(\iomega,\tau)}{(\tau-n) M^{\tau-n}}.
\end{equation}
Then, from the definition of $\gamma_M(\iomega,\tau)$, 
\[
\hat p(\tau)= 1-\frac{C(\iomega,n) }{(\tau-n) M^{-n}} \min \left\{ |k \cdot \omega-m| \left(\frac{|k|_1}{M}\right)^{\tau} : \, \omega \in \iomega, \, 0<|k|_1\leq M, \, m \in \ZZ  \right\},
\]
from where we deduce that $\hat p$ is a strictly increasing function of $\tau$, and $\displaystyle\lim_{\tau\to n^+} \hat p(\tau)= -\infty$. Moreover, since 
\[
\hat p(\tau) \geq 1-\frac{C(\iomega,n) }{2(\tau-n) M^{-n}},
\]
then $\displaystyle\lim_{\tau\to \infty} \hat p(\tau)= 1$.
Hence, there exists a unique $\tau= \tau_M(\iomega)>n$ such that $\hat p(\tau)= 0$. The rest of the proof follows immediately. 
\end{proof}

\begin{remark}\label{cubic interval vector}
The constant $C(\iomega,n)$ in Proposition~\ref{cor:Dioph} is specially simple if the length of the edges of the interval vector are equal:
\[
	C(\iomega,n) = \frac{2^{2n} n^{\tfrac{n}2}}{(n-1)!} =: C(n).
\]
In particular, we have  $C(1)= 4$ and $C(2)= 32$.
\end{remark}

In this paper we are not interested in maximizing the measure of Diophantine
numbers but simply in guaranteeing that it is positive. 
Proposition~\ref{cor:Dioph} provides a simple algorithm to associate pairs
$(\gamma,\tau)$ to an interval vector $\iomega$. Specifically, we take $M\geq n$ and solve Equation~\eqref{eq: bound resonant} 
to find the pair $(\gamma_M(\iomega), \tau_M(\iomega))$. 
Notice that  $\tau_M(\iomega)$ is the minimum value of $\tau$ that
guarantees the existence of Diophantine vectors in 
$\iomega$ for $\gamma\leq \gamma_M(\iomega)$. In order to produce
larger sets of Diophantine vectors, one can take 
a pair $(\gamma_M(\iomega,\tau),\tau)$, with $\tau\geq \tau_M(\iomega)$ (since the function $\hat p$ defined in the proof is strictly increasing). 
Giving measure estimates of invariant tori
(both in frequency space and in phase space)
is a very interesting problem that deserves full attention
and will be considered in future work.

%

To illustrate the previous construction we consider (tight) interval frequencies 
$\iomega$
enclosing
\begin{equation}\label{eq:omega:a,b}
[\omega_{a,b} - 2^{-50},\omega_{a,b} + 2^{-50}], \quad \mbox{with} \quad
\omega_{a,b}=\frac{\sqrt{b^2 + 4 b/a}-b}{2}.
\end{equation}
In Table~\ref{tab:apenA:1d}
we provide Diophantine constants $(\gamma,\tau)$,
for some of these interval frequencies, 
such that the Lebesgue measure of
$(\gamma, \tau)$-Diophantine frequencies in $\iomega$ is positive. 
The computations have been performed using the interval arithmetics
library MPFI (see~\cite{RevolR05}) taking a precision of 64 bits.
The values of Table~\ref{tab:apenA:1d} are obtained using $M=1000$. If we take a larger value
of $M$, then we can obtain a smaller lower value of $\tau$. For example,
for the golden mean (with $a=b=1$),
taking $M=10^3$ we obtain $\tau \geq 1.26$,
taking $M=10^4$ we obtain $\tau \geq 1.22$, taking
$M=10^5$ we obtain $\tau \geq 1.19$,
and taking
$M=10^6$ we obtain $\tau \geq 1.17$,

\begin{table}[!h]
\centering
{\scriptsize
\begin{tabular}{|c c| c c || c c| c c || c c |c c|}
\hline
$a$ & $b$ & $\gamma \leq$ & $\tau \geq$ & 
$a$ & $b$ & $\gamma \leq$ & $\tau \geq$ & 
$a$ & $b$ & $\gamma \leq$ & $\tau \geq$ \\
\hline \hline
1 & 1 & 0.381966011250104 & 1.26 & 3 & 1 & 0.263762615825972 & 1.23 & 5 & 1 & 0.170820393249935& 1.19 \\
1 & 2 & 0.267949192431121 & 1.23 & 3 & 2 & 0.290994448735804 & 1.24 & 5 & 2 & 0.183215956619922& 1.20 \\
1 & 3 & 0.208712152522079 & 1.21 & 3 & 3 & 0.302775637731993 & 1.24 & 5 & 3 & 0.188194301613412& 1.20 \\
1 & 4 & 0.171572875253808 & 1.19 & 3 & 4 & 0.277309053319640 & 1.23 & 5 & 4 & 0.190890230020663& 1.20 \\
1 & 5 & 0.145898033750314 & 1.18 & 3 & 5 & 0.223037765858308 & 1.21 & 5 & 5 & 0.192582403567251& 1.20 \\
1 & 6 & 0.127016653792582 & 1.17 & 3 & 6 & 0.187329140491556 & 1.20 & 5 & 6 & 0.193743884534261& 1.20 \\
2 & 1 & 0.366025403784437 & 1.26 & 4 & 1 & 0.207106781186546 & 1.21 & 6 & 1 & 0.145497224367901& 1.18 \\
2 & 2 & 0.413767832000904 & 1.27 & 4 & 2 & 0.224744871391588 & 1.22 & 6 & 2 & 0.154700538379250& 1.18 \\
2 & 3 & 0.300011472016747 & 1.24 & 4 & 3 & 0.232050807568876 & 1.22 & 6 & 3 & 0.158312395177698& 1.19 \\
2 & 4 & 0.235323972166368 & 1.22 & 4 & 4 & 0.236067977499788 & 1.22 & 6 & 4 & 0.160246899469285& 1.19 \\
2 & 5 & 0.192798030208926 & 1.20 & 4 & 5 & 0.238612787525829 & 1.22 & 6 & 5 & 0.161453237111884& 1.19 \\
2 & 6 & 0.163806299636515 & 1.19 & 4 & 6 & 0.206140402288459 & 1.21 & 6 & 6 & 0.162277660168378& 1.19 \\
\hline
\end{tabular}
\caption{ {\footnotesize Rigorously computed Diophantine constants
$(\gamma,\tau)$ ensuring positive measure for several $1$-dimensional (tight) interval
frequencies $\iomega$ enclosing intervals given
by Equation~\eqref{eq:omega:a,b}.  We use the methodology derived from
Proposition~\ref{cor:Dioph} with $M=1000$.  }} \label{tab:apenA:1d}}
\end{table}

As an illustration for
$n=2$, we consider (tight) interval
frequency vectors 
$\iomega$ enclosing
\begin{equation}\label{eq:omegap}
[\omega_p - 2^{-50},\omega_p + 2^{-50}] \times
[\omega_q - 2^{-50},\omega_q + 2^{-50}] \subset \RR^2,
\end{equation}
with $\omega_p= \sqrt{p}-[\sqrt{p}]$
and 
$\omega_q= \sqrt{q}-[\sqrt{q}]$,
where $[\, \cdot \,]$ stands for the integer part.
The computations have been performed using the interval arithmetics
library MPFI (see~\cite{RevolR05}) taking a precision of 64 bits. Diophantine constants
associated to these intervals are given in Table~\ref{tab:apenA:2d}, using
the methodology derived from Proposition~\ref{cor:Dioph} with $M=1000$.

\begin{table}[!h]
\centering
{\scriptsize
\begin{tabular}{|c| c c c c c c|}
\hline
 &
$p=2$ &
$p=3$ &
$p=5$ &
$p=7$ &
$p=11$ &
$p=13$ \\
\hline \hline
$q=2$ & \cellcolor[gray]{0.8}& $0.1421950391579065$ & $0.0100027079360758$ & $0.0216249622296120$ & $0.1108017984300913$ & $0.0642768464253883$ \\
$q=3$ & $2.40$ & \cellcolor[gray]{0.8}& $0.1706190237467459$ & $0.3712175538099537$ & $0.0587500580246455$ & $0.2621757106909654$ \\
$q=5$ & $2.14$ & $2.42$ & \cellcolor[gray]{0.8}& $0.0090434339906178$ & $0.0939733872704087$ & $0.1120569499083642$ \\
$q=7$ & $2.20$ & $2.51$ & $2.13$ & \cellcolor[gray]{0.8}& $0.0139696161322762$ & $0.1940680771272715$ \\
$q=11$& $2.37$ & $2.30$ & $2.35$ & $2.17$ & \cellcolor[gray]{0.8}& $0.0044815206623438$ \\
$q=13$ & $2.31$ & $2.47$ & $2.37$ & $2.43$ & $2.09$ & \cellcolor[gray]{0.8}\\
\hline
\end{tabular}
\caption{
{\footnotesize 
Rigorously computed Diophantine constants $(\gamma,\tau)$ ensuring
positive measure for several $2$-dimensional
(tight) interval frequencies
$\iomega$ enclosing intervals given by Equation~\eqref{eq:omegap}.
We use the methodology derived from Proposition~\ref{cor:Dioph} with $M=1000$.
An upper value of $\gamma$ is given above the diagonal and a lower value of $\tau$
is given below the diagonal.}}
\label{tab:apenA:2d}
}
\end{table}

\subsection{On the R\"ussmann estimates}\label{ssec:superrussmann}

In this section we present a version of the R\"ussmann estimates that 
is tailored to be evaluated with the
help of the computer. We use the notation introduced in
Section~\ref{sec-anal-prelims}.

\begin{lemma}\label{lem-Russmann}
Let $\omega\in \RR^n$ be a $(\gamma,\tau)$-Diophantine frequency vector, for
certain $\gamma>0$ and $\tau\geq n$ (see Definition~\ref{def-Diophantine}).
Then, for any analytic function $v: \TT_\rho^n \rightarrow \CC$, with
$\snorm{v}_\rho < \infty$ and $\rho>0$,  there exists a unique zero-average
analytic solution $u:  \TT_\rho^n \rightarrow \CC$ of $\L u=v-\avg{v}$, denoted
by $u= \R v$. Moreover, given $L\in \NN$, for any $0<\delta<\rho$ we have 
\begin{equation}\label{eq:Russmann}
\snorm{u}_{\rho-\delta} \leq \frac{c_R(\delta)}{\gamma\delta^{\tau}}
\snorm{v}_{\rho},
\end{equation}
where
\begin{equation}\label{eq:russmann:mejor}
c_R(\delta) =\sqrt{ \gamma^2 \delta^{2\tau} 2^n \sum_{0<|k|_1\leq L}
\frac{\mathrm{e}^{-4 \pi |k|_1\delta}}{4 |\sin(\pi k \cdot \omega)|^2}  +
2^{n-3} \zeta(2,2^\tau) (2\pi)^{-2\tau} \int_{4\pi\delta (L+1)}^\infty
u^{2\tau} e^{-u}\ \dif u}
\end{equation}
and $\zeta(a,b)=\sum_{j\geq 0} (b+j)^{-a}$ is the Hurwitz zeta function.
\end{lemma}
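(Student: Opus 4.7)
My plan is to construct the unique formal solution via Fourier series and then control its sup-norm on the smaller strip by combining Cauchy--Schwarz with a Parseval-type identity on complex translates of $\TT^n$. Writing $v(\theta)=\sum_k v_k e^{2\pi \ii k\cdot\theta}$, the equation $\L u=v-\avg{v}$ forces $u_0=0$ and $u_k=v_k/(1-e^{2\pi \ii k\cdot\omega})$ for $k\neq 0$. Since $|1-e^{2\pi \ii k\cdot\omega}|=2|\sin(\pi k\cdot\omega)|$ and the inequality $|\sin(\pi t)|\geq 2\,\mathrm{dist}(t,\ZZ)$ on $[-\tfrac12,\tfrac12]$ combined with the Diophantine hypothesis gives $|\sin(\pi k\cdot\omega)|\geq 2\gamma |k|_1^{-\tau}$, the formal series for $u$ converges absolutely on $\TT^n_{\rho-\delta}$, yielding existence, uniqueness and analyticity of $u=\R v$.

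For the quantitative estimate I would bound $|u(\theta)|$ pointwise on $\TT^n_{\rho-\delta}$ by its absolute Fourier series, and apply Cauchy--Schwarz after splitting the weight as $e^{2\pi|k|_1(\rho-\delta)}=e^{2\pi|k|_1\rho}\cdot e^{-2\pi|k|_1\delta}$:
\begin{equation*}
|u(\theta)|^2 \leq \bigg(\sum_{k\neq 0}|v_k|^2 e^{4\pi|k|_1\rho}\bigg)\bigg(\sum_{k\neq 0}\frac{e^{-4\pi|k|_1\delta}}{4|\sin(\pi k\cdot\omega)|^2}\bigg).
\end{equation*}
The first factor is controlled by $2^n\snorm{v}_\rho^2$: decomposing $\ZZ^n\setminus\{0\}$ into its $2^n$ sign-orthants and choosing on each the imaginary translate $y=-\sigma\rho$, where $\sigma\in\{-1,+1\}^n$ is the sign vector, Parseval on $\{x+\ii y:x\in\TT^n\}$ gives $\sum_{\mathrm{sgn}(k)=\sigma}|v_k|^2 e^{4\pi|k|_1\rho}=\int_{\TT^n}|v(x+\ii y)|^2\,\dif x\leq \snorm{v}_\rho^2$, and summing the $2^n$ contributions produces the desired factor.

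The remaining sum is then split at $|k|_1=L$. The part with $0<|k|_1\leq L$ is kept as the explicit sum $S_1$ and reproduces, after multiplication by $\gamma^2\delta^{2\tau}2^n$, the first term under the square root in the definition of $c_R(\delta)$; this is enclosed with interval arithmetic during the computer assisted verification. For the tail $|k|_1>L$, the Diophantine bound replaces $1/(4|\sin(\pi k\cdot\omega)|^2)$ by $|k|_1^{2\tau}/(16\gamma^2)$, reducing matters to estimating $\sum_{|k|_1>L}|k|_1^{2\tau}e^{-4\pi|k|_1\delta}$. The plan is to group by shells $|k|_1=m$ (whose lattice-point count satisfies $\#\{k\in\ZZ^n:|k|_1=m\}\leq 2^n m^{n-1}/(n-1)!$) and to compare the resulting one-dimensional sum with the integral obtained after the substitution $u=4\pi\delta m$; the Hurwitz zeta factor $\zeta(2,2^\tau)=\sum_{j\geq 0}(2^\tau+j)^{-2}$ arises from absorbing the polynomial prefactor into an $(a+j)^{-2}$-type convergent series while peeling off successive remainders from the comparison. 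Squaring the Cauchy--Schwarz bound and assembling the two pieces yields exactly $\snorm{u}_{\rho-\delta}\leq c_R(\delta)/(\gamma\delta^\tau)\snorm{v}_\rho$. The most delicate step is this tail-to-integral comparison: balancing the lattice-point counts against the super-exponential decay in a way that produces the closed-form expression with the precise constants stated is what dictates the asymmetric factors $2^n$ versus $2^{n-3}$ in the two summands, and is the main obstacle of the proof.
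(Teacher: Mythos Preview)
Your overall architecture matches the paper: formal solution, Cauchy--Schwarz with the splitting $e^{2\pi|k|_1(\rho-\delta)}=e^{2\pi|k|_1\rho}e^{-2\pi|k|_1\delta}$, the bound $\sum_k|v_k|^2e^{4\pi|k|_1\rho}\leq 2^n\snorm{v}_\rho^2$ via Parseval on the $2^n$ sign-orthants, and the split of the small-divisor sum at $|k|_1=L$. The gap is in the tail. Replacing $1/(4|\sin(\pi k\cdot\omega)|^2)$ by $|k|_1^{2\tau}/(16\gamma^2)$ mode by mode and then counting lattice points on shells yields, after $u=4\pi\delta m$, an integral of $u^{2\tau+n-1}e^{-u}$ and a prefactor $(4\pi\delta)^{-2\tau-n}$, with no Hurwitz zeta anywhere. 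This does not reproduce the stated $c_R(\delta)$ (and is in fact worse by a factor $\delta^{-n}$); your sentence about the zeta arising from ``absorbing the polynomial prefactor'' is not a mechanism that exists here.

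The missing idea is R\"ussmann's sorting argument, which the paper uses via Abel summation. Write $d_k=k\cdot\omega-m_k$ with $|d_k|=\mathrm{dist}(k\cdot\omega,\ZZ)$. For fixed $\ell$, sort the \emph{positive} divisors with $|k|_1\leq\ell$ as $0<d_{k_1}<d_{k_2}<\cdots$. Since $d_{k_i}-d_{k_{i-1}}=d_{k_i-k_{i-1}}$ (mod $\ZZ$) and $|k_i-k_{i-1}|_1\leq 2\ell$, each gap is at least $\gamma(2\ell)^{-\tau}$, while $d_{k_1}\geq\gamma\ell^{-\tau}$; hence $d_{k_j}\geq\gamma(2\ell)^{-\tau}(2^\tau+j-1)$ and
\[
\sum_{0<|k|_1\leq\ell}\frac{1}{|d_k|^2}\ \leq\ 2\sum_{j\geq 1}\frac{(2\ell)^{2\tau}}{\gamma^2(2^\tau+j-1)^2}\ =\ \frac{2^{2\tau+1}\zeta(2,2^\tau)}{\gamma^2}\,\ell^{2\tau}.
\]
This is the origin of $\zeta(2,2^\tau)$, and the crucial point is that the bound is $O(\ell^{2\tau})$, \emph{independent of $n$}, which is exactly what your mode-by-mode estimate cannot give. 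One then writes $\sum_{|k|_1>L}\frac{e^{-4\pi|k|_1\delta}}{16|d_k|^2}\leq\sum_{\ell>L}\Big(\sum_{|k|_1\leq\ell}\frac{1}{16|d_k|^2}\Big)(e^{-4\pi\ell\delta}-e^{-4\pi(\ell+1)\delta})$ by Abel summation, inserts the bound above, and compares the resulting one-dimensional sum with $\int_{L+1}^\infty x^{2\tau}\,4\pi\delta e^{-4\pi\delta x}\,dx$; the substitution $u=4\pi\delta x$ then produces the factor $2^{-3}\zeta(2,2^\tau)(2\pi\delta)^{-2\tau}\int_{4\pi\delta(L+1)}^\infty u^{2\tau}e^{-u}\,du$ exactly as stated.
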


\begin{proof}
We follow standard arguments  (see~\cite{ Llave01, HaroCFLM, LuqueV11,
Russmann75, Russmann76a}), with an eye in the feasibility of computing rigorous
upper bounds of finite sums (up to order $L$).

We control the divisors in the expansion of the function $u(\theta)= \R
v(\theta)$, formally given by Equation~\eqref{eq:small:formal}, as
\[
| 1-\mathrm{e}^{2\pi \mathrm{i} k \cdot \omega}  | =  2 |\sin(\pi k \cdot
\omega)| \geq 2^2 \min_{m \in \ZZ} |k\cdot \omega -m|,
\]
where we used that $\sin x \geq 2x/\pi$ if $x \leq \pi/2$ and that ${\min_{m\in
\ZZ} |k\cdot \omega-m| < 1/2}$.  Then, it is natural to introduce the notation
$d_k= k\cdot \omega - m_k$, such that $|d_k|=  \min_{m\in \ZZ} |k\cdot
\omega-m|$.  Notice that the divisors $d_k$ satisfy $d_{k_1} \neq d_{k_2}$ if
$k_1\neq k_2$, and $d_{-k}= -d_k$.  The Diophantine condition in
\eqref{eq:Diophantine} reads $|d_k| \geq \gamma |k|_1^{-\tau}$. 

We then control the norm of $u$ as 
\begin{align}
\|u\|_{\rho-\delta} \leq {} & \|u\|_{F,\rho-\delta} \leq \sum_{k \in
\ZZ^n\backslash \{0\}} \frac{|\hat v_k|}{2 |\sin(\pi k \cdot \omega)|}
\mathrm{e}^{2 \pi |k|_1 (\rho-\delta)} \nonumber \\ \leq {} & \bigg( \sum_{k
\in \ZZ^n\backslash \{0\}} |\hat v_k|^2 \mathrm{e}^{4\pi |k|_1\rho}
\bigg)^{1/2} \bigg( \sum_{k \in \ZZ^n\backslash \{0\}} \frac{\mathrm{e}^{-4 \pi
|k|_1\delta}}{2^2\sin^2(\pi k \cdot \omega) } \bigg)^{1/2},
\label{eq:Cauchy-Schwarz}
\end{align}
where we used Cauchy-Schwarz inequality. On the one hand, the first term 
is bounded by
\begin{equation}\label{eq:Bessel}
\sum_{k \in \ZZ^n\backslash \{0\}} |\hat v_k|^2 \mathrm{e}^{4 \pi |k|_1\rho} \leq 
\sum_{k \in \ZZ^n} |\hat v_k|^2 \mathrm{e}^{4 \pi |k|_1\rho} \leq
2^n \|v\|^2_\rho
\end{equation}
(see \cite{Russmann75}), 
and on the other hand, the second term is bounded by computing the sum up to order $L$ and controlling the tail,
\begin{equation}\label{eq:summa}
\sum_{k \in \ZZ^n\backslash \{0\}} \frac{\mathrm{e}^{-4 \pi |k|_1\delta} }{4 |\sin(\pi k \cdot \omega)|^2} 
  = \sum_{0<|k|_1\leq L} \frac{\mathrm{e}^{-4 \pi |k|_1\delta}}{4 |\sin(\pi k \cdot \omega)|^2} + 
    \sum_{|k|_1 > L} \frac{\mathrm{e}^{-4 \pi |k|_1\delta}}{4 |\sin(\pi k \cdot \omega)|^2}.
\end{equation}
By using that $2 |\sin(\pi k \cdot \omega)|\geq 2^2 |d_k|$, and the Abel summation formula, we bound the tail by
\begin{equation}\label{eq:tail}
\sum_{|k|_1 > L} \frac{\mathrm{e}^{-4 \pi |k|_1\delta}}{4 |\sin(\pi k \cdot \omega)|^2} \leq 
\sum_{\ell=L+1}^\infty \bigg(\sum_{\substack{k \in \ZZ^n\backslash \{0\} \\ |k|_1 \leq \ell}} \frac{1}{2^4|d_k|^2}\bigg) (\mathrm{e}^{- 4 \pi \ell \delta} - \mathrm{e}^{-4 \pi (\ell+1) \delta}).
\end{equation}

Then, given $\ell \in \NN$, we define the set of positive divisors up to order $\ell$ as 
\[
\mathcal{D}_\ell = \{k \in \ZZ^n\backslash \{0\}\,:\, |k|_1 \leq \ell~\mbox{and}~ d_k >0\},
\]
and we sort the divisors according to $0 < d_{k_1} < \ldots < d_{k_{\# \mathcal{D}_\ell}}$ with $k_j \in \mathcal{D}_\ell$, for $j=1,\ldots,\#\mathcal{D}_\ell$. 
We obtain recursively that 
\[
	d_{k_j} = (d_{k_j} - d_{k_{j-1}}) + \dots  + (d_{k_2} -d_{k_{1}} ) + d_{k_1} \\
	            \geq (j-1) \gamma (2\ell)^{-\tau} + \gamma \ell^{-\tau},
\]
where we used that $|k_1|_1 \leq  \ell$ and $|{k_i} - {k_{i-1}}|_1 \leq 2\ell$.
Then, we have
\[
\sum_{j=1}^{\# \mathcal{D}_\ell} \frac{1}{(d_{k_{j}})^2} \leq \sum_{j=1}^\infty \frac{\ell^{2\tau}}{\gamma^2(1+(j-1) 2^{-\tau})^2} = 
2^{2\tau} \sum_{j= 0}^\infty \frac1{(2^\tau+j)^2}  \frac{\ell^{2\tau}}{\gamma^2} = 
2^{2\tau} \zeta(2,2^\tau) \frac{\ell^{2\tau}}{\gamma^2}.
\]
The same result is obtained for the sum corresponding to the negative
divisors up to order $\ell$. 

Finally, we control the sum of Equation~\eqref{eq:tail} as follows
\begin{align}
\sum_{|k|_1 > L} & \frac{\mathrm{e}^{-4 \pi |k|_1\delta}}{4 |\sin(\pi k \cdot \omega)|^2} \nonumber
\leq 
\frac{2^{2\tau} \zeta(2,2^\tau) }{2^3 \gamma^2} \sum_{\ell>L} \ell^{2\tau} \int_\ell^{\ell+1} 4 \pi \delta \mathrm{e}^{-4 \pi \delta x} \dif x \nonumber \\
& \leq \frac{4 \pi \delta 2^{2\tau} \zeta(2,2^\tau) }{2^3 \gamma^2} \int_{L+1}^{\infty}x^{2\tau}\mathrm{e}^{-4 \pi \delta x} \dif x
= \frac{2^{-3} \zeta(2,2^\tau)}{\gamma^2 (2\pi \delta)^{2\tau}} \int_{4\pi\delta(L+1)}^\infty u^{2\tau} e^{-u} \dif u .
\label{eq:control:div}
\end{align}
Combining Equations~\eqref{eq:Cauchy-Schwarz}, \eqref{eq:Bessel}, and \eqref{eq:control:div},  we end up with the stated estimate.
\end{proof}

\begin{remark}
Taking $L= 0$ we obtain 
\begin{equation}\label{eq:classic:russmann}
c_R(\delta) =\sqrt{ 2^{n-3} \zeta(2,2^\tau) (2\pi)^{-2\tau} \int_{4\pi\delta}^\infty u^{2\tau} e^{-u}\ \dif u}	 \leq 
	 \sqrt{ 2^{n-3} \zeta(2,2^\tau) (2\pi)^{-2\tau} \Gamma(2\tau +1)},
\end{equation}
which gives us the classic (uniform) R\"ussmann estimate.
\end{remark}

If we use a computer to control the first divisors explicitly,
then it turns out that the expression $c_R(\delta)$ in Equation~\eqref{eq:russmann:mejor}
improves the classic estimate in Equation~\eqref{eq:classic:russmann}. To this end,
we enclose $\omega$ with an interval vector $\iomega$, as described in
Section~\ref{ssec:diophantine}, and
we rigorously enclose the finite sum for $0<|k|_1 \leq L$
using interval arithmetics. We consider upper bounds of the integral in the tail
using that, if $y>x$,
\[
\int_{y}^\infty u^x e^{-u} \dif u \leq \frac{y}{y-x} y^x e^{-y}.
\]
Applying this last estimate requires to take $L$ such that $4\pi\delta(L+1)> 2 \tau$ in \eqref{eq:control:div}.
In practice, we take a value of $L$ large enough 
in such a way that the contribution of the tail is smaller than the contribution
of the finite sum. 

%
%
In Tables~\ref{tab:russmann:1d} and~\ref{tab:russmann:2d} we present some
rigorous bounds of $c_R(\delta)$, given by Equation~\eqref{eq:russmann:mejor}, for several values of $\delta$ and we compare them
with the classic constant $c_R$, given by Equation~\eqref{eq:classic:russmann}. Specifically, in
Table~\ref{tab:russmann:1d} we consider 
the same 1-dimensional interval frequencies characterized in Table~\ref{tab:apenA:1d}.
In Table~\ref{tab:russmann:2d} we consider 
the same 2-dimensional interval frequencies characterized in Table~\ref{tab:apenA:2d}.
The computations have been performed using the interval arithmetics
library MPFI (see~\cite{RevolR05}) taking a precision of 64 bits.
We observe that the improvement of Equation~\eqref{eq:russmann:mejor} is remarkable and,
in some cases, we improve the classic constant by several orders of magnitude.
As we will see, this represents an important improvement in order to apply the KAM theorem.

\begin{table}[!h]
\centering
{\scriptsize
\begin{tabular}{|c c| c | c c c c c |}
\hline
\multicolumn{2}{|c|}{$\omega_{a,b}$} &
\multicolumn{6}{c|}{$c_R(\delta) \leq$} \\
\hline
$a$ & $b$ & $L=0$ & 
$\delta=0.1$ &
$\delta=0.01$ &
$\delta=0.001$ &
$\delta=0.0001$ &
$\delta=0.00001$ \\
\hline \hline
1 & 1 & 6.53700395e-02 & 1.70002315e-02 & 1.01408017e-02 & 5.57856565e-03 & 3.06566441e-03 & 1.68472062e-03\\
1 & 2 & 6.76832915e-02 & 1.40967097e-02 & 8.84204788e-03 & 5.26914629e-03 & 3.08884783e-03 & 1.79851814e-03\\
1 & 3 & 6.92956514e-02 & 1.29193920e-02 & 8.71895511e-03 & 5.18084015e-03 & 3.29070636e-03 & 1.98865837e-03\\
1 & 4 & 7.09671322e-02 & 1.25818941e-02 & 8.36835924e-03 & 5.84462956e-03 & 3.60202865e-03 & 2.21507905e-03\\
1 & 5 & 7.18258809e-02 & 1.23983728e-02 & 7.88617196e-03 & 5.47041537e-03 & 3.93080619e-03 & 2.62830129e-03\\
1 & 6 & 7.27004339e-02 & 1.24386826e-02 & 8.01767004e-03 & 5.19907101e-03 & 3.59023334e-03 & 2.59913106e-03\\
2 & 1 & 6.53700395e-02 & 1.74727821e-02 & 1.04651045e-02 & 5.78939864e-03 & 3.21601209e-03 & 1.75767438e-03\\
2 & 2 & 6.46258261e-02 & 1.87353432e-02 & 1.09024839e-02 & 5.85645816e-03 & 3.14665921e-03 & 1.68979306e-03\\
2 & 3 & 6.68984767e-02 & 1.63024413e-02 & 9.88023684e-03 & 5.56751156e-03 & 3.12759392e-03 & 1.77052228e-03\\
2 & 4 & 6.84822445e-02 & 1.53872642e-02 & 9.71562658e-03 & 5.85967409e-03 & 3.53201756e-03 & 2.12887318e-03\\
2 & 5 & 7.01238363e-02 & 1.50961843e-02 & 9.60632258e-03 & 5.78570281e-03 & 3.43840456e-03 & 2.04301934e-03\\
2 & 6 & 7.09671322e-02 & 1.48540346e-02 & 9.04822601e-03 & 5.05304849e-03 & 2.95729166e-03 & 1.98349009e-03\\
3 & 1 & 6.76832915e-02 & 1.46809408e-02 & 1.00993505e-02 & 5.63999620e-03 & 3.50708184e-03 & 1.95894122e-03\\
3 & 2 & 6.68984767e-02 & 1.43496537e-02 & 8.98100848e-03 & 5.22239107e-03 & 3.08135263e-03 & 1.82028295e-03\\
3 & 3 & 6.68984767e-02 & 1.58579091e-02 & 9.83769747e-03 & 5.65400813e-03 & 3.24897579e-03 & 1.86887029e-03\\
3 & 4 & 6.76832915e-02 & 1.62566583e-02 & 9.69074106e-03 & 5.59759584e-03 & 3.32500705e-03 & 1.99628047e-03\\
3 & 5 & 6.92956514e-02 & 1.50813939e-02 & 8.58023012e-03 & 5.31739360e-03 & 3.58299882e-03 & 2.46021534e-03\\
3 & 6 & 7.01238363e-02 & 1.42772183e-02 & 7.56136730e-03 & 5.20508341e-03 & 4.02265302e-03 & 2.54652970e-03\\
4 & 1 & 6.92956514e-02 & 1.31882573e-02 & 9.51364826e-03 & 6.14222557e-03 & 3.50022063e-03 & 2.17640510e-03\\
4 & 2 & 6.84822445e-02 & 1.26639499e-02 & 8.35820776e-03 & 5.03133457e-03 & 3.02541383e-03 & 1.81923509e-03\\
4 & 3 & 6.84822445e-02 & 1.32535840e-02 & 9.03724309e-03 & 5.61174299e-03 & 3.33248407e-03 & 1.90879693e-03\\
4 & 4 & 6.84822445e-02 & 1.39802574e-02 & 9.34078901e-03 & 5.55619944e-03 & 3.32472818e-03 & 2.05369884e-03\\
4 & 5 & 6.84822445e-02 & 1.48184252e-02 & 9.49849042e-03 & 5.85472306e-03 & 3.88991086e-03 & 2.07085456e-03\\
4 & 6 & 6.92956514e-02 & 1.38269028e-02 & 8.66584042e-03 & 6.22603715e-03 & 3.90134199e-03 & 1.99958702e-03\\
5 & 1 & 7.09671322e-02 & 1.26864212e-02 & 8.86482905e-03 & 6.21678963e-03 & 4.26563195e-03 & 2.68508905e-03\\
5 & 2 & 7.01238363e-02 & 1.22725538e-02 & 8.83966465e-03 & 5.80010426e-03 & 3.74378915e-03 & 2.37051014e-03\\
5 & 3 & 7.01238363e-02 & 1.24917622e-02 & 9.34589411e-03 & 5.62325597e-03 & 3.07349358e-03 & 1.89609801e-03\\
5 & 4 & 7.01238363e-02 & 1.27481886e-02 & 9.35639250e-03 & 5.19209738e-03 & 3.32315143e-03 & 2.18847977e-03\\
5 & 5 & 7.01238363e-02 & 1.30462113e-02 & 9.23511743e-03 & 5.79164682e-03 & 3.51817789e-03 & 2.36970971e-03\\
5 & 6 & 7.01238363e-02 & 1.33875654e-02 & 9.20387961e-03 & 6.71312444e-03 & 3.57979848e-03 & 2.30908333e-03\\
6 & 1 & 7.18258809e-02 & 1.24345830e-02 & 8.50000514e-03 & 5.57984051e-03 & 3.88857456e-03 & 2.76605873e-03\\
6 & 2 & 7.18258809e-02 & 1.23988684e-02 & 9.62126814e-03 & 6.51124354e-03 & 4.05219896e-03 & 2.34111803e-03\\
6 & 3 & 7.09671322e-02 & 1.22004393e-02 & 9.29930218e-03 & 4.89324773e-03 & 2.80122249e-03 & 2.10296918e-03\\
6 & 4 & 7.09671322e-02 & 1.22880935e-02 & 8.91873609e-03 & 4.68287840e-03 & 3.19881083e-03 & 2.50645650e-03\\
6 & 5 & 7.09671322e-02 & 1.23862489e-02 & 8.53842453e-03 & 5.33661343e-03 & 3.55154114e-03 & 2.28605325e-03\\
6 & 6 & 7.09671322e-02 & 1.24977263e-02 & 8.40455003e-03 & 5.93603968e-03 & 3.81558500e-03 & 2.25377574e-03\\
\hline
\end{tabular}
\caption{ {\footnotesize 
We give a rigorous upper bound of $c_R(\delta)$, given by Equation~\eqref{eq:russmann:mejor}
for  several $1$-dimensional tight interval frequencies
frequencies $\iomega$ enclosing intervals given
by Equation~\eqref{eq:omega:a,b}.
We use the values $(\gamma,\tau)$ provided in Table~\ref{tab:apenA:1d}. The column
$L=0$ corresponds to the classic R\"ussmann constant in Equation~\eqref{eq:classic:russmann}.
}} \label{tab:russmann:1d}}
\end{table}

\begin{table}[!h]
\centering
{\scriptsize
\begin{tabular}{|c c| c | c c c c c |}
\hline
\multicolumn{2}{|c|}{} &
\multicolumn{6}{c|}{$c_R(\delta) \leq$} \\
\hline
$p$ & $q$ & $L=0$ & 
$\delta=0.1$ &
$\delta=0.05$ &
$\delta=0.01$ &
$\delta=0.005$ &
$\delta=0.001$ \\
\hline \hline
$2$ & $3$  & 3.62859961e-02 & 3.10060284e-03 & 6.00977402e-03 & 1.50395887e-03 & 1.23391239e-03 & 8.15687337e-04\\
$2$ & $5$  & 4.24412098e-02 & 6.88136970e-04 & 8.38062576e-04 & 6.85210209e-03 & 3.48793730e-03 & 1.39435290e-03\\
$2$ & $7$  & 4.08296165e-02 & 1.51913516e-03 & 1.64056165e-03 & 5.96490692e-03 & 2.86266455e-03 & 1.08117640e-03\\
$2$ & $11$ & 3.68950309e-02 & 3.40361447e-03 & 5.37454913e-03 & 2.42226866e-03 & 1.16692138e-03 & 5.72003609e-04\\
$2$ & $13$ & 3.81857929e-02 & 3.34411162e-03 & 1.88186836e-03 & 5.70267600e-03 & 6.35005202e-03 & 2.59988091e-03\\
$3$ & $5$  & 3.58927516e-02 & 6.12958788e-03 & 3.95221883e-03 & 2.42682994e-03 & 1.30875574e-03 & 1.17588322e-03\\
$3$ & $7$  & 3.42417879e-02 & 6.62761410e-03 & 4.15150247e-03 & 2.08654672e-03 & 1.29692066e-03 & 1.82961175e-03\\
$3$ & $11$ & 3.84108018e-02 & 5.79145725e-03 & 5.55693971e-03 & 8.86690139e-04 & 7.03975422e-04 & 1.57560298e-03\\
$3$ & $13$ & 3.49523429e-02 & 5.16292254e-03 & 6.09178577e-03 & 2.29447926e-03 & 1.84095693e-03 & 1.36098576e-03\\
$5$ & $7$  & 4.27225331e-02 & 6.29450376e-03 & 6.95330075e-03 & 8.63897764e-04 & 3.99507510e-04 & 1.95384727e-03\\
$5$ & $11$ & 3.73142594e-02 & 2.16195541e-03 & 3.88935752e-03 & 1.44076003e-03 & 7.74345647e-04 & 6.57036231e-03\\
$5$ & $13$ & 3.68950309e-02 & 2.73635212e-03 & 4.34566648e-03 & 2.63259686e-03 & 1.00779405e-03 & 1.73212787e-03\\
$7$ & $11$ & 4.16193859e-02 & 2.03472877e-03 & 6.68639919e-03 & 2.04760389e-03 & 7.38986151e-04 & 2.71601280e-04\\
$7$ & $13$ & 3.56998570e-02 & 4.23255275e-03 & 4.16600489e-03 & 2.44375364e-03 & 1.91977769e-03 & 4.23864594e-03\\
$11$& $13$ & 4.38862946e-02 & 2.62790710e-03 & 7.60559938e-03 & 2.07028161e-03 & 6.59592023e-04 & 2.09531497e-04\\
\hline
\end{tabular}
\caption{ {\footnotesize 
We give a rigorous upper bound of $c_R(\delta)$, given by Equation~\eqref{eq:russmann:mejor}
for several $2$-dimensional (tight) interval frequencies
$\iomega$ enclosing intervals given by Equation~\eqref{eq:omegap}.
We use the values $(\gamma,\tau)$ provided in Table~\ref{tab:apenA:2d}. The column
$L=0$ corresponds to the classic R\"ussmann constant in Equation~\eqref{eq:classic:russmann}.
}} \label{tab:russmann:2d}}
\end{table}

\section{Validation algorithm to apply the KAM theorem}\label{sec:validation:algorithm}

In this section we present a methodology to perform computer assisted
validations of Lagrangian quasi-periodic invariant tori in exact symplectic
maps.  Given an approximately $F$-invariant torus (e.g. obtained numerically)
we have to rigorously bound the error of such an approximation, and to
rigorously verify the \emph{a priori} hypotheses of Theorem~\ref{theo:KAM}.
One of the applications of the proposed methodology falls into one of the main
strains of the field of {\em validated numerics}
and, for this reason, we use
the term \emph{validation algorithm} in what follows.

The computational cost of the proposed methodology is of order $O(\Ntot \log
\Ntot)$, where $\Ntot$ is the total number of Fourier coefficients what we use.
This is because we use 
fast Fourier transform to manipulate approximations of
periodic functions 
and then we use the error estimates discussed in
Section~\ref{sec:DFT}.  Therefore, we do not require to perform any symbolic
manipulation of Fourier series.  An important aspect of our approach is that
the asymptotic cost of the validation coincides with the asymptotic cost of
computing invariant tori using the parameterization method
(see~\cite{CallejaL10,FoxM14,HaroCFLM,HuguetLS12}).

\subsection{A validation algorithm}
\label{ssec:val:algo}

In this section we propose an algorithm to apply
Theorem~\ref{theo:KAM} in particular problems. The algorithm is stated at a formal level but, 
when implementing it using a computer,
operations must be performed using interval arithmetics.
In Algorithm~\ref{algo:validation} we overview the procedure.
A detailed discussion of each step is given in separate subsections.

Let us first present some useful notation.
We recall that $F: \A \rightarrow \A$ is homotopic to the
identity and $K: \TT^n \rightarrow \A$ is homotopic to the zero section.  For
this reason, it is interesting to introduce the notation
\[
F(x,y)=(x,0)+F_p(x,y),
\]
and
\[
K(\theta)=(\theta,0)+K_p(\theta).
\]

To handle periodic functions, we approximate them using discrete Fourier
transform and we control such approximation using the estimates presented in
Section~\ref{sec:DFT}.  Given a periodic function $f: \TT^n \rightarrow \CC$
and a regular grid of size $\NF=(\Ni{1},\ldots,\Ni{n}) \in \NN^n$, satisfying
that $\Ni{\ell}= 2^{q_\ell}$, with $q_\ell \in \NN$, for $\ell=1,\ldots,n$, we
consider a sample of points
$\theta_j \in \TT^n$, given by~\eqref{eq:sample:torus},
that defines
an $n$-dimensional array $\{f_j\}$, $f_j=f(\theta_j)$,
where $j=(j_1,\ldots,j_n)$, with $0\leq j_\ell < \Ni{\ell}$ and $1\leq \ell \leq n$.

We recall that the total number of points is given by $\Ntot = \Ni{1}\cdots \Ni{n}$
and that we denote by $\tilde f$ the discrete Fourier approximation given
by Equation~\eqref{eq:four:approx}. 
For every sample $\{f_j\}$ we introduce the forward discrete Fourier transform
\begin{equation}\label{eq:DFTformulaF}
\{\tilde f_k\}= \FDFT(\{f_j\}), \quad \mbox{with} \quad \tilde f_k=
\frac{1}{\Ntot} \sum_{0\leq j < \NF} f_j \mathrm{e}^{-2\pi \mathrm{i} k \cdot
\theta_j},
\end{equation}
with $k=(k_1,\ldots,k_n)$ and
$-\tfrac{\Ni{\ell}}{2} \leq k_\ell < \tfrac{\Ni{\ell}}2$. Similarly, we introduce the backward discrete Fourier
transform
\begin{equation}\label{eq:DFTformulaB}
\{f_j\}= \BDFT(\{\tilde f_k\}) , \quad \mbox{with} \quad f_j=
\sum_{-\frac{\NF}{2} \leq  k < \frac{\NF}{2}} \tilde f_k \mathrm{e}^{2\pi
\mathrm{i} k \cdot \theta_j}.
\end{equation}

Notice that Equations~\eqref{eq:DFTformulaF} and~\eqref{eq:DFTformulaB} are just
formal definitions. When implementing the validation algorithm we will resort
to FFT to evaluate these expressions. Such FFT algorithms carry operations in a
particular ordering, which may not be optimal in terms of rounding operations
(when using interval arithmetics), but allows performing fast computations.
From now on we use the notation $\FFFT$ and $\BFFT$ to refer to
Equations~\eqref{eq:DFTformulaF} and~\eqref{eq:DFTformulaB} evaluated
according to the FFT algorithms.

Finally, we give some details on the manipulation of functions discretized in
the Fourier space.  Given a periodic function $f$, discretized as $\{\tilde
f_k\}$, we compute the Fourier discretization of a partial derivative
$\partial_{\theta_l}f$ as $\{(\widetilde{\partial_{\theta_l}
f})_k\}=\{2\pi k_l i \tilde f_k\}$, which corresponds to a diagonal operator
in Fourier space.  Similarly, given an interval vector $\iomega$, the
composition $f \comp \Shift{\iomega}$ is approximated as $\{ (\widetilde {f
\comp \Shift{\iomega}})_k \} = \{\ee^{2\pi \ii k \cdot \iomega} \tilde f_k \}$.

\begin{algorithm}\label{algo:validation}
Given an annulus $\A$ endowed with an exact symplectic structure $\sform = \dif
\aform$ represented by $\Omega(z)$ and $a(z)$, let us
consider an exact symplectic map $F : \A \rightarrow \A$ and the following
input:
\begin{itemize}
\item \textbf{Input 1}. Sampling of an approximately
invariant torus:
a sampling $\{K_{p,j}\}$, with $K_{p,j}=K_0(\theta_j)-(\theta_j,0)$
on a regular grid on $\TT^n$ of size
$\NF=(\Ni{1},\ldots,\Ni{n})$, with $\Ni{\ell}= 2^{q_\ell}$ and $q_\ell \in
\NN$, for $\ell=1,\ldots,n$.
\item \textbf{Input 2}. Transversal vectors:
a map $N_0 : \TT^n \rightarrow \RR^{2 n \times n}$.
\item \textbf{Input 3}. Frequency vector:
a pair of constants $\tau > 0$ and $\gamma >0$, and an interval vector
$\iomega=(\iomega_1,\ldots,\iomega_n) \in \IR^n$ containing a
$(\gamma,\tau)$-Diophantine vector of frequencies.
\item \textbf{Input 4}. KAM parameters:
constants $\delta,\sigma,\rho, \hat \rho, \rho_\infty$, and $d_\B$ satisfying
$0<\delta<\rho/3$, $0<\rho<\hat \rho$, $1<\sigma$, $0< \rho_\infty <
\rho$, and $d_\B > 0$.
\end{itemize}
Then, proceed 
as follows:
\begin{itemize}
\item \textbf{Step 0}. Parameterization of the approximately invariant torus:
compute the parameterization $K(\theta)=(\theta,0)+\tilde K_p(\theta)$, and
characterize the global complex domain $\B$.  Details are provided in
Section~\ref{ssec:details:step0}.

\item \textbf{Step 1}. Error of invariance:
compute a constant $\sub{b}{E}$ such that $\snorm{E}_\rho \leq \sub{b}{E}$.  Details are
provided in Section~\ref{ssec:details:step1}.

\item \textbf{Step 2}. Symplectic frame:
compute constants $\sub{b}{\Dif K}$, $\sub{b}{\Dif K^\top}$, $\sub{b}{B}$, $\sub{b}{A}$, $\sub{b}{N}$, and $\sub{b}{N^\top}$ such
that 
$\snorm{\Dif K}_\rho \leq \sub{b}{\Dif K}$,
$\snorm{\Dif K^\top}_\rho \leq \sub{b}{\Dif K^\top}$,
$\snorm{B}_\rho \leq \sub{b}{B}$,
$\snorm{A}_\rho \leq \sub{b}{A}$,
$\snorm{N}_\rho \leq \sub{b}{N}$, and
$\snorm{N^\top}_\rho \leq \sub{b}{N^\top}$.
Details are provided in Section~\ref{ssec:details:step2}.

\item \textbf{Step 3}. Torsion matrix:
compute the discrete Fourier approximation $\tilde T$ of $T$ and
compute a constant $\sub{b}{T}$ such that $|\avg{T}| \leq \sub{b}{T}$.
Details are
provided in Section~\ref{ssec:details:step3}.

\item \textbf{Step 4}. Hypotheses of the theorem:
introduce the constants
\begin{equation}\label{eq:const:Step4}
\sub{\sigma}{\Dif K}=\sub{b}{\Dif K} \, \sigma, \qquad
\sub{\sigma}{\Dif K^\top}=\sub{b}{\Dif K^\top} \,\sigma, \qquad
\sub{\sigma}{B}=\sub{b}{B} \, \sigma, \qquad
\sub{\sigma}{T}=\sub{b}{T} \, \sigma,
\end{equation}
so that (using $\sigma>1$) Hypotheses $H_1$, $H_2$,
$H_3$, $H_4$, and $H_5$ in Theorem~\ref{theo:KAM} are satisfied.
Then, compute constants $\mathfrak{C}_1$ and $\mathfrak{C}_2$.
Details are
provided in Section~\ref{ssec:details:step4}.

\end{itemize}
If the condition
\[
\frac{\mathfrak{C}_1 \sub{b}{E}}{\gamma^{4} \rho^{4 \tau}} < 1
\]
holds, then, for every $(\gamma,\tau)$-Diophantine frequency $\omega \in
\iomega$ there exists an $F$-invariant torus
$K_{\infty,\omega} (\TT^n)$, analytic in the strip of
width $\rho_{\infty}=\rho/a_2$. Moreover, these invariant tori satisfy
\[
\snorm{\Dif K_{\infty,\omega}}_{\rho_\infty} < \sub{\sigma}{\Dif K}, \qquad
\snorm{\Dif K^\top_{\infty,\omega}}_{\rho_\infty} < \sub{\sigma}{\Dif K^\top}, 
\qquad
\snorm{K_{\infty,\omega}-K}_{\rho_\infty} < 
\frac{\mathfrak{C}_2 \sub{b}{E}}{\gamma^{2} \rho^{2\tau}}.
\]
\end{algorithm}

\begin{remark}\label{rem:pbound}
Incidentally, the above process gives a lower bound of the relative measure of the vectors $\omega$ in $\iomega$
for which the KAM theorem applies.
Assume that the pair $(\gamma,\tau)$ has been assigned to the interval vector $\iomega$ 
by the algorithm derived from Proposition~\ref{cor:Dioph} (using a given value of $M$).
Notice that for $\gamma_0< \gamma$ the constant $\mathfrak{C}_1= \mathfrak{C}_1(\gamma)$ is an upper bound of $\mathfrak{C}_1(\gamma_0)$.
Then, if we take $\gamma_0$ such that  
\[
\frac{\mathfrak{C}_1(\gamma) \sub{b}{E}}{\gamma_0^{4} \rho^{4 \tau}} = 1, 
\]
it turns out that the theorem also applies for $(\gamma_0,\tau)$-Diophantine vectors. A straightforward computation gives that 
\begin{equation}\label{eq:pbound}
	p(\iomega,\gamma_0,\tau)  >   1- \frac{C(\iomega,n) \gamma_0}{(\tau-n) M^{\tau-n}} = 
	1 -  \frac{C(\iomega,n) \sqrt[4]{\mathfrak{C}_1(\gamma) \sub{b}{E}}}{(\tau-n) M^{\tau-n} \rho^\tau}. 
\end{equation}
\end{remark}

\subsection{Implementation details of Step 0}\label{ssec:details:step0}

This preliminary step is performed in order to simplify the implementation
of the algorithm. We observe that Theorem~\ref{theo:KAM} deals with real
analytic objects with real analytic derivatives. Notice that this property is not preserved
by DFT. A simple way to avoid this problem is to consider the validation of a
suitable parameterization. To this end, we perform
the following computations:
\begin{itemize}
\item We compute $\tilde K_p = \{\tilde K_{p,k}\} = \FFFT (\{K_{p,j}\})$, where  $\{K_{p,j}\}$ is the sample given in Input~1.
\item We set to zero the coefficients $\tilde K_{p,k}$ 
with $k=(k_1,\ldots,k_n)$ such that 
$-\Ni{\ell}/2 \leq k_\ell \leq -\Ni{\ell}/4$ or $\Ni{\ell}/4 \leq k_\ell
<\Ni{\ell}/2$
for some index $k_\ell$.
\item
We set the parameterization $K(\theta)=(\theta,0)+\tilde K_p(\theta)$ and
redefine the sampling $\{K_{p,j}\} = \BFFT(\{\tilde K_{p,k}\})$, replacing the
original one.
\end{itemize}

\begin{remark}
An alternative formulation of Algorithm~\ref{algo:validation} would consist in
considering the constructed discrete Fourier approximation $\tilde K=\{\tilde
K_{p,k}\}$ in Input~1.
\end{remark}

Now we introduce the domain $\B$ given by
\begin{equation}\label{eq:domainB}
\B = \{ (x,y) \in \CC^n/\ZZ^n \times \CC^n \,:\, 
|{\im\,x_i}|\leq d_{\B}+\rho+\snorm{K^{x_i}_p}_{F,\rho} \, , \,
|y_i|\leq d_{\B}+\snorm{K^{y_i}_p}_{F,\rho} \},
\end{equation}
and the domain $\hat \B$ given by
\[
\hat \B = \{ (x,y) \in \CC^n/\ZZ^n \times \CC^n \,:\, 
|{\im\,x_i}|\leq \hat\rho+\snorm{K^{x_i}_p}_{F,\hat \rho} \, , \,
|y_i|\leq \snorm{K^{y_i}_p}_{F,\hat \rho} \}.
\]
Finally, we compute the following upper estimates 
$\snorm{\Dif a}_{\B} \leq \sub{c}{\Dif a}$,
$\snorm{\Dif^2a}_{\B} \leq \sub{c}{\Dif^2 a}$,
$\snorm{\Omega}_{\B} \leq \sub{c}{\Omega}$,
$\snorm{\Dif \Omega}_{\B} \leq \sub{c}{\Dif \Omega}$,
$\snorm{\Dif F}_{\B} \leq \sub{c}{\Dif F}$, and
$\snorm{\Dif^2 F}_{\B} \leq \sub{c}{\Dif^2 F}$,
which appear in the statement
of Theorem~\ref{theo:KAM}. We also compute
the upper estimates
$\snorm{\Omega}_{\B} \leq \sub{\hat c}{\Omega}$,
and
$\snorm{F_p}_{\hat \B} \leq \sub{\hat c}{F_p}$.

\subsection{Implementation details of Step 1}\label{ssec:details:step1}

To evaluate the error $E(\theta)$ we use the 
expression 
\begin{equation}\label{eq:error:real}
E(\theta)=
\begin{pmatrix}
K_p^x(\theta)+F_p^x(K(\theta)) - K_p^x(\theta+\iomega) - \iomega\\
\phantom{K_p^x(\theta)+} F_p^y(K(\theta)) - K_p^y(\theta+\iomega)
\phantom{-\iomega}
\end{pmatrix}.
\end{equation}
To evaluate this formula at the grid points, we first compute $F_p \comp K$ and
$K_p \comp \Shift{\iomega}$: the first term is computed directly from the grid,
thus obtaining
\[
\{(F_p\comp K)_j\}
=\{F_p(\theta_j+K_{p,j}^x,K_{p,j}^y)\},
\]
and the second term is computed in Fourier space, 
thus obtaining $\{\widetilde{(K_p \comp
\Shift{\iomega})_k} \}$. Then we compute
\[
\{(K_p \comp \Shift{\iomega})_j \}
=\BFFT(\{\widetilde{(K_p \comp \Shift{\iomega})_k} \}).
\]
From these expressions, the computation of Equation~\eqref{eq:error:real} at the grid,
$\{E_j\}$, is straightforward.
Then, we compute $\{\tilde E_k\}= \FFFT (\{E_j\})$.

Finally, using component-wise Theorem~\ref{ADFT}, we have
\begin{align*}
\snorm{\tilde E-E}_\rho = {} & \max \{\snorm{\tilde E^x-E^x}_\rho,
\snorm{\tilde E^y-E^y}_\rho\} \leq C_{\NF}(\rho,\hat \rho) \max \{\snorm{E^x}_{\hat
\rho}, \snorm{E^y}_{\hat \rho}\} \\ \leq {} & C_{\NF}(\rho,\hat \rho) \max \left\{
\snorm{F^x_p \circ K}_{\hat \rho} + 2 \snorm{K^x_p}_{\hat \rho}+|\iomega|,
\snorm{F^y_p \circ K}_{\hat \rho} + \snorm{K^y_p}_{\hat \rho} \right\}, \\ \leq
{} & C_{\NF}(\rho,\hat \rho)  \max \left\{ \sub{\hat c}{F_p} + 2 \snorm{K^x_p}_{F,\hat
\rho}+|\iomega|, \sub{\hat c}{F_p} + \snorm{K^y_p}_{F,\hat \rho} \right\},
\end{align*}
thus obtaining
\[
\snorm{E}_\rho \leq \snorm{\tilde E}_{F,\rho}+C_{\NF}(\rho,\hat \rho)  \max \left\{
\sub{\hat c}{F_p} + 2 \snorm{K^x_p}_{F,\hat \rho}+|\iomega|, \sub{\hat c}{F_p} +
\snorm{K^y_p}_{F,\hat \rho} \right\} =: \sub{b}{E}.
\]

\subsection{Implementation details of Step 2}\label{ssec:details:step2}

To construct the frame $P(\theta)$ we represent the tangent vectors $\Dif K(\theta)$
as \label{eq:DK:F}
\[
\{\widetilde{\Dif K}_k\}= \left\{
\begin{pmatrix}
I_n \\
O_n
\end{pmatrix} \delta_{k,0} + \widetilde{\Dif K}_{p,k}
\right\},
\qquad
\{\Dif K_j\}=\left\{
\begin{pmatrix}
I_n \\
O_n
\end{pmatrix} 
+\Dif K_{p,j}\right\},
\]
where $\delta_{k,0}$ is Kronecker's delta, and the computation of
$\{\widetilde{\Dif K}_{p,k} \}$ is performed in Fourier space.
Finally compute $\{\Dif K_{p,j}\} = \BFFT(\{\widetilde{\Dif K}_{p,k} \})$.

We compute an approximation of the matrix
$G(\theta)=-\Dif K(\theta)^\top \Omega(K(\theta)) N_0(\theta)$
in a grid as
\[
\{G_j\}=\{-\Dif K_j^\top \Omega_j N_{0,j}\},
\]
where
$\Omega_j=\Omega((\theta_j,0)+K_{p,j})$ and $N_{0,j} = N_0(\theta_j)$.
We complement $\Dif K(\theta)$ by computing $N(\theta)$ as
\[
\{N_j\}=\{\Dif K_j A_j + N_j^0 B_j\},
\]
where
\begin{equation}\label{eq:B:real}
\{B_j\}=\{G_j^{-1}\} \quad \mbox{and} \quad
\{A_j\}= \{-\frac{1}{2} (B_j^\top {N_{0,j}}^\top
\Omega_j N_{0,j} B_j)\}.
\end{equation}
Then, we obtain $\{\tilde N_k\}$ using ${\rm DFT}$, thus ending up with
\[
\{\tilde P_k\}=  
\{\big(\widetilde{\Dif K}_k \quad \tilde N_k\big)\},
\qquad
\{P_j\}=
\{\big(\Dif K_j \quad N_j \big)\}.
\]

Let us observe that, since the parameterization is a truncated series,
we have $\Dif K(\theta)=\widetilde{\Dif K} (\theta)$ and we set
\[
\snorm{\Dif K}_\rho \leq \snorm{\widetilde{\Dif K}}_{F,\rho} =: \sub{b}{\Dif K}, \qquad
\snorm{\Dif K^\top}_\rho \leq \snorm{\widetilde{\Dif K}^\top}_{F,\rho} =: \sub{b}{\Dif K^\top}.
\]
We also need to control
$\snorm{N_0}_\rho \leq \sub{c}{N_0}$,
$\snorm{N_0^\top}_\rho \leq \sub{c}{N_0^\top}$,
$\snorm{N_0^\top (\Omega \circ K) N_0}_\rho \leq \sub{c}{N_0^\top (\Omega \circ K) N_0}$,
$\snorm{N_0}_{\hat \rho} \leq \sub{\hat c}{N_0}$, and
$\snorm{N_0^\top}_{\hat \rho} \leq \sub{\hat c}{N_0^\top}$.
Since the selection of $N_0$ depends on the particular application at hand, we do not
give here explicit details for the estimation of these objects. They follow using the
same ideas that we discuss next to control the error of discrete Fourier
approximations of the remaining objects.
To use Corollary~\ref{cor:matrix:multi}
we control the norm of $I_n-G(\theta) \tilde B(\theta)$
as follows
\begin{align*}
\snorm{I_n-G \tilde B}_\rho \leq {} & C_{\NF}(\rho,\hat\rho) \snorm{G}_{\hat \rho} \snorm{\tilde B}_{\hat\rho} 
\leq
C_{\NF}(\rho,\hat \rho) \snorm{\widetilde{\Dif K}^\top}_{\hat \rho} \snorm{\Omega}_{\hat \B} \snorm{N_0}_{\hat \rho} \snorm{\tilde B}_{\hat \rho} \\
\leq {} &
 C_{\NF}(\rho,\hat\rho) \sub{\hat c}{\Omega} \sub{\hat c}{N_0} \snorm{\widetilde{\Dif K}^\top}_{F,\hat \rho} \snorm{\tilde B}_{\hat\rho} =: t_B.
\end{align*}
Then, if $t_B<1$
we use Corollary~\ref{cor:matrix:inv}, thus obtaining
\begin{equation}
\label{eq:tB}
\snorm{B}_\rho \leq \snorm{\tilde B}_{F,\rho} + \frac{t_B \snorm{\tilde B}_{F,\hat\rho}}{1-t_B}
=:
\sub{b}{B}.
\end{equation}

Finally, we obtain direct estimates for $\snorm{A}_\rho$ and $\snorm{N}_\rho$. On the one hand, using
Equation~\eqref{eq:tB}, we have
\begin{equation}\label{eq:tA}
\snorm{A}_\rho \leq \frac{1}{2} \snorm{B^\top}_\rho \snorm{N_0^\top (\Omega \circ K) N_0}_\rho 
\snorm{B}_\rho
\leq \frac{n}{2} \sub{c}{N_0^\top (\Omega \circ K) N_0} (\sub{b}{B})^2 =:\sub{b}{A},
\end{equation}
and on the other hand, using Equations~\eqref{eq:tB} and~\eqref{eq:tA}, we have
\begin{align}
& \snorm{N}_\rho \leq \snorm{\Dif K}_\rho \snorm{A}_\rho + \snorm{N_0}_\rho \snorm{B}_\rho \leq \sub{b}{\Dif K} \sub{b}{A}+ \sub{c}{N_0} \sub{b}{B}=:\sub{b}{N}, \label{eq:tN} \\
& \snorm{N^\top}_\rho \leq \snorm{A}_\rho \snorm{\Dif K^\top}_\rho + \snorm{B^\top}_\rho \snorm{N_0^\top}_\rho
\leq \sub{b}{A} \sub{b}{\Dif K^\top} + n \sub{b}{B} \sub{c}{N_0^\top}=:\sub{b}{N^\top} . \label{eq:tNt}
\end{align}

\subsection{Implementation details of Step 3}\label{ssec:details:step3}

To compute the torsion matrix $T(\theta)$, we first obtain the
shifted normal frame $\{ (\tilde N \circ \Shift{\iomega} )_k\}$
and $\{ (\tilde N \circ \Shift{\iomega} )_j \} = \BFFT(\{ (\tilde N \circ \Shift{\iomega} )_k\})$.
Then, we compute
\[
\{T_j\} = \{ (N \comp \Shift{\iomega})_j^\top (\Omega \comp K \comp \Shift{\iomega})_j (\Dif F \comp K)_j N_j \},
\]
and approximate the average $\avg{T}$ as follows
\[
\tilde T_0 = \frac{1}{\Ntot} \sum_j T_j.
\]
Then we compute
\[
|\tilde T_0 - \avg{T}| 
\leq s_{\NF}^*(0,\rho) \snorm{T}_\rho 
\leq s_{\NF}^*(0,\rho) \snorm{N^\top}_\rho \snorm{\Omega}_{\B} \snorm{\Dif F}_{\B} \snorm{N}_\rho
\leq s_{\NF}^*(0,\rho) \sub{c}{\Omega} \sub{c}{\Dif F} \sub{b}{N} \sub{b}{N^\top} := t_T
\]
using the upper estimates in~\eqref{eq:tN} and~\eqref{eq:tNt} for $ \snorm{N}_\rho$ and $\snorm{N^\top}_\rho$, respectively.
Finally, we check that
\[
|\tilde T_0^{-1}| t_T< 1,
\]
and we obtain (using a Neumann series argument)
\[
|\avg{T}^{-1}| \leq \frac{|\tilde T_0^{-1}|}{1- |\tilde T_0^{-1}||\tilde T_0 - \avg{T}| } \leq
\frac{|\tilde T_0^{-1}|}{1 - |\tilde T_0^{-1}|t_T } =: \sub{b}{T}.
\]

\subsection{Implementation details of Step 4}\label{ssec:details:step4}

We notice that the choice in~\eqref{eq:const:Step4} introduces some suitable
simplifications in the expression of Constants
$\mathfrak{C}_1$ and $\mathfrak{C}_2$.  Indeed, after simple manipulations of
the expressions described in Section~\ref{ssec:proof:KAM} and
using the Equations~\eqref{eq:const:Step4} and~\eqref{eq:domainB}, we obtain
\[
\mathfrak{C}_1 = \max \left\{ 2(a_3)^{\tau+1} \gamma^3 \rho^{3 \tau-1} C_3, 
\mathfrak{C}_{3}, 
\mathfrak{C}_{4}, 
\mathfrak{C}_{5}
\right\},
\qquad
\mathfrak{C}_2:= \frac{a_3^{2 \tau} \hat C_2}{1-a_1^{1-2\tau}},
\]
where
\begin{equation}\label{eq:C3C4C5}
\mathfrak{C}_{3}= (a_1 a_3)^{4 \tau} \hat C_5, \qquad
\mathfrak{C}_{4}
= \frac{\sigma_* (a_3)^{2\tau +1} \gamma^2 \rho^{2\tau -1} \hat C_2}{(\sigma-1) (1-a_1^{1-2\tau})}, \qquad
\mathfrak{C}_{5}
= \frac{(a_3)^{2\tau} \gamma^2 \rho^{2\tau} \hat C_2}{d_\B (1-a_1^{-2\tau})}.
\end{equation}
Finally, we provide expressions for the constants $\sigma_*$, $\hat C_2$, and $\hat C_5$ in terms
of the initial data. The first one is given by
\begin{equation}
\label{def: sigma*}
\sigma_* = \max \left\{\frac{n}{\sub{b}{\Dif K}},\frac{2n}{\sub{b}{\Dif K^\top}},\frac{\beta_1}{\sub{b}{B}},\frac{\beta_5}{\sub{b}{T}}\right\}  
\end{equation}
where
\begin{align*}
\beta_1 = {} &  2 \sub{\sigma}{B}^2 \sub{c}{N_0} \Big(\sub{\sigma}{\Dif K^\top} \sub{c}{\Dif \Omega} \delta + 2 n \sub{c}{\Omega} \Big), \\
\beta_2 = {} & \tfrac{n}{2} (\sub{\sigma}{B})^2 \sub{c}{\Dif \Omega} \delta + \tfrac{n+1}{2} \sub{c}{N_0^\top (\Omega \circ K) N_0} \beta_1, \\
\beta_3 = {} & 
\sub{\sigma}{\Dif K} \beta_2 + n \sub{c}{A} + c_{N_0} \beta_1, \\
\beta_4 = {} &
\sub{\sigma}{\Dif K^\top} \beta_2 + 2 n \sub{c}{A} + n \sub{c}{N_0^\top} \beta_1, \\
\beta_5 = {} & 
2 (\sub{\sigma}{T})^2 
\Big(\sub{c}{N^\top} \sub{c}{N} (\sub{c}{\Omega} \sub{c}{\Dif^2 F} + \sub{c}{\Dif \Omega} \sub{c}{\Dif F} )\delta
+ \sub{c}{\Omega} \sub{c}{\Dif F} (\sub{c}{N^\top} \beta_3 + \sub{c}{N} \beta_4) \Big)
\end{align*}
and
\[
\sub{c}{A} = \tfrac{1}{2} n \sub{c}{N_0^\top (\Omega \circ K) N_0}  (\sub{\sigma}{B})^2,
\quad
\sub{c}{N} = \sub{\sigma}{\Dif K} \sub{c}{A} + \sub{c}{N_0}\sub{\sigma}{B},
\quad
\sub{c}{N^\top}= \sub{c}{A}\sub{\sigma}{\Dif K^\top} + n \sub{\sigma}{B}\sub{c}{N_0^\top}.
\]
The constants $\hat C_2$ and $\hat C_5$ follow from the next sequence of computations:
{\allowdisplaybreaks
\begin{align*}
\sub{c}{P} = {} & \sub{\sigma}{\Dif K} + \sub{c}{N}, \\
\sub{c}{T} = {} & \sub{c}{N^\top}\sub{c}{\Omega}\sub{c}{\Dif F}\sub{c}{N}, \\
C_1 = {} &  \sub{\sigma}{\Dif K^\top}\sub{\sigma}{\Dif K}\sub{c}{\Dif \Omega}\delta + n\sub{\sigma}{\Dif K^\top}\sub{c}{\Omega} + 2n\sub{c}{\Omega}\sub{c}{\Dif F}\sub{\sigma}{\Dif K}, \\
C_2 = {} & c_R C_1, \\
C_3 = {} & (1+\sub{c}{A})\max (1,\sub{c}{A}) C_2, \\
C_4 = {} & n\sub{c}{N^\top}\sub{c}{\Omega}\gamma\delta^\tau + \sub{c}{A}C_2, \\
C_5 = {} & C_2 + n\sub{\sigma}{\Dif K^\top}\sub{c}{\Omega}\gamma\delta^\tau, \\
C_6 = {} & \sub{c}{A}C_2 + \sub{\sigma}{\Dif K^\top}\sub{c}{\Dif \Omega}\sub{c}{\Dif F}\sub{c}{N}\gamma\delta^{\tau+1} + 2n\sub{c}{\Omega}\sub{c}{\Dif F}\sub{c}{N}\gamma\delta^\tau, \\
C_7 = {} & \max (C_4,C_5+C_6), \\
C_8 = {} & 2c_R\sub{\sigma}{\Dif K^\top}\sub{c}{\Omega}, \\
C_9 = {} & C_8 + \sub{\sigma}{T}(\sub{c}{N^\top}\sub{c}{\Omega}\gamma\delta^\tau + \sub{c}{T}C_8), \\
C_{10}= {} & c_R(\sub{c}{N^\top}\sub{c}{\Omega}\gamma\delta^\tau + \sub{c}{T}C_9), \\
\hat C_2= {} & \sub{\sigma}{\Dif K}C_{10} + \sub{c}{N}C_9\gamma\delta^\tau, \\
C_{15}= {} & (C_3+C_7)\max (C_9\gamma\delta^\tau,C_{10}) + 2n\sub{c}{\Dif a} \gamma^3 \delta^{3 \tau} + \tfrac{1}{2}\sub{c}{\Dif^2 a} \gamma^3 \delta^{3 \tau+1}, \\
\hat C_5= {} & 2\sub{c}{P}C_{15}\gamma\delta^{\tau-1}+ \tfrac{1}{2}\sub{c}{\Dif^2 F} \hat C_2^2,
\end{align*}}

\vspace{-0.5cm}
\noindent where the value $c_R$ is computed using Equation~\eqref{eq:russmann:mejor}.
We recall that if we take the value $L=0$ in~\eqref{eq:russmann:mejor} we
obtain the classic expression in~\eqref{eq:classic:russmann}.
\begin{remark}\label{rem:details:improvements}
It is worth mentioning that the above expressions are very general. Using
specific information from a particular
problems, it is possible to improve some estimates.  For
example, if $n=1$ then the phase space is $2$-dimensional and every
$1$-dimensional subspace is Lagrangian. This has the immediate consequence that
we can take $C_1=0$, $c_A=0$ and $\sub{c}{N_0^\top (\Omega \circ K) N_0}=0$,
thus simplifying the computations and reducing the size of the subsequent
constants.  For the same reason, if $n=1$ we do not have to control
$\snorm{\bar A - A}_{\rho-3\delta}$ so we can take $C_{12}=0$, and so,
$\beta_2=0$.  Moreover, we can take advantage of the specific expression of
the map $F$ and the matrix $N_0$ in order to control the norm of the twist
matrix $T$.  More precisely, we can replace the estimate $\snorm{T}_\rho \leq
\sub{c}{T} = \sub{c}{N^\top} \sub{c}{\Omega} \sub{c}{\Dif F} \sub{c}{N}$ by an
\emph{ad hoc} estimation for the considered problem.
\end{remark}

\section{Application of the KAM theorem in some examples}
\label{sec:examples}

In this section we apply the techniques described in the paper to prove
existence of invariant tori in different scenarios.
A common feature in the selected examples is that the
objects $F$, $\Omega$, and $a$ are explicit. 
As a consequence we directly obtain global estimates
for
$\snorm{\Dif F}_\B$,
$\snorm{\Dif^2 F}_\B$,
$\snorm{\Omega}_\B$,
$\snorm{\Dif \Omega}_\B$,
$\snorm{\Dif a}_\B$, and
$\snorm{\Dif^2 a}_\B$.
This allows us to focus in the fundamental
steps involved in the computer assisted proof (CAP).
If the symplectic map is defined by means of an
implicit equation or it is given by the discretization of
the flow of a Hamiltonian vector field, then the 
control of global estimates may deserve a particular attention
and can become a very difficult problem. We plan to
approach the study of such problems in a
subsequent research.

In order to avoid repeated explanations in the description of the different
examples, we summarize next some general details regarding the input
of Algorithm~\ref{algo:validation} that we use to perform the CAPs.

\begin{itemize}
\item \textbf{Input 1}.
The numerical values of the sampling $\{K_{p,j}\}$,
with $K_{p,j}=K_0(\theta_j)-(\theta_j,0)$ in a
regular grid of size $\NF=(\Ni{1},\ldots,\Ni{n})$,
are provided by means
of a data file. These numbers are read
rounding to the nearest representable number.
These numbers are used to construct the
parameterization that will be validated (Step 0 of the Algorithm~\ref{algo:validation}).
\item \textbf{Input 2}.
The transversal map $N_0$ is selected
according to the particular problem at hand. For example, if we look for an
invariant curve and we know that it is a graph, then we can introduce
a constant vector complementing the tangent vector of the curve at every point. 
We also can use the geometric structure of the problem to obtain the transversal map.
The evaluation rule for the map $N_0$
is provided by means of a separate
subroutine.
The reader is referred to Chapter 4 of~\cite{HaroCFLM} for a detailed discussion 
that summarizes different approaches in the literature.
\item \textbf{Input 3}.
When the frequency vector is defined by means of an algebraic
equation then $\iomega$ is obtained by enclosing the solution of this equation. When
the frequency vector has been obtained in a numerical computation, then
the enclosing interval vector $\iomega$ is selected according with the
precision of the computation.
If the Diophantine constants of a target frequency are know, they can be provided
(e.g. for the golden mean). Otherwise, we obtain a pair of constants $\tau > n$ and $\gamma >0$ using
the methodology derived from Proposition~\ref{cor:Dioph}.

\item \textbf{Input 4}.
For every parameterization given in Input 1,
we need suitable constants
$\delta,\sigma,\rho, \hat \rho, \rho_\infty$, and $d_\B$ satisfying
$0<\delta<\rho/3$, $0<\rho<\hat \rho$, $1<\sigma$, $0\leq \rho_\infty <
\rho$, and $d_\B > 0$. These constants are obtained from the heuristic
(non-rigorous) methodology described in Appendix~\ref{app:optimization}, and are provided by a data file. 
They are read rounding to the nearest representable number.

\end{itemize}

The sampling $\{K_{p,j}\}$ is obtained numerically
using the implementation of the parameterization method proposed in Chapter 4
of~\cite{HaroCFLM}. Analogous implementations of the method in different contexts have been
previously presented in the literature adapted to several
problems~\cite{CallejaL10,CallejaF,CanadellH14,LlaveL11,FoxM14,HaroL06b,HuguetLS12}.  
We want to remark that the validation algorithm admits complete freedom in using a different
method to obtain the approximate parameterization. Indeed, the numerical
computation of invariant tori has been a fruitful area of research in the last
years and there is a wide set of numerical methods available. The
parameterization method is a suitable choice for several reasons: it has cost
$O(\Ntot)$ in storage and only $O(\Ntot \log(\Ntot))$ in time, where
$\Ntot=\Ni{1}\cdots\Ni{n}$; it does not use a perturbative setting of the
problem so we can consider any invariant torus in phase space, without
performing any perturbative analysis of the parameters of the problem; the
numerical algorithm of the parameterization method has a similar structure than
Algorithm~\ref{algo:validation} so we can take advantage of the same codes
using a suitable overloading of arithmetics.  Finally, we observe that the
bottleneck of Algorithm~\ref{algo:validation} is the error of invariance of the
parameterization defined by the sampling $\{K_{p,j}\}$ (see details in
Section~\ref{ssec:details:step0}). To this end, the fact that the convergence
of the parameterization method is quadratic and that the iterations are fast
allows us to obtain approximations of invariant tori with very high accuracy.

\subsection{Standard map}\label{ssec:standard}

We first consider the study of quasi-periodic invariant
curves of the so-called Chirikov standard map~\cite{Chirikov79}
\begin{equation}\label{eq:standard:map}
\begin{array}{rcl}
F:
\TT \times \RR & 
\longrightarrow &
\TT \times \RR \\
(x,y) & \longmapsto & (\bar x, \bar y)=\left(x+\bar y, y-\frac{\epsilon}{2\pi} \sin (2\pi x)\right)
\end{array}.
\end{equation}
For $\eps=0$ the dynamics is very
simple: the orbit of any point $(x,y)\in \TT\times \RR$ is given by the rigid
rotation $F^n(x,y)=(x+ny,y)$. Note that
if $y =p/q \in \QQ$ the corresponding orbit is periodic, i.e.,
$F^q(x,y)=(x+p,y)=(x,y)$. On the contrary, if $y \in \RR\backslash
\QQ$, the orbit is dense in the invariant curve $\TT \times \{y\}$.
In any case, the orbit of a given point $(x,y)$ has rotation number $\omega=y$
for every $x \in \TT$.

For $\eps > 0$, sufficiently small, {\KAM} theory concludes that ``most'' of the previous
invariant curves persist, although they are slightly deformed.
The deformation preserves the homotopy class of these rotational invariant curves, also called {\em primary tori}.
These curves are successively destroyed as $\eps$ is
increased.
The value of $\eps$
for which an invariant curve is destroyed is called critical value $\eps_c=\eps_c(\omega)$.
A particularly interesting case is the golden rotation $\omega=\tfrac{\sqrt{5}-1}{2}$.

For the standard map, given by Equation~\eqref{eq:standard:map}, we have $\A=\TT \times \RR$,
$\aform=\aform_0= y\dif x$ and $\sform= \sform_0= \dif y \wedge \dif x$.
Hence, we take $\sub{c}{\Omega}=1$, $\sub{c}{\Dif \Omega}=0$, $\sub{c}{\Dif a}=1$ and $\sub{c}{\Dif^2 a}=0$.
We select the transversal vector $N_0 : \TT \rightarrow \RR^{2\times 1}$ as
\[
N_0(\theta)= N_0=
\begin{pmatrix}
0 \\
1
\end{pmatrix}.
\]

In Step 0 of Algorithm~\ref{algo:validation}, we introduce the domain $\B$ as
\[
\B = \{ (x,y) \in \CC/\ZZ \times \CC \,:\, 
|{\im\,x}|\leq r_1 \, , \, |y|\leq r_2 \},
\]
and the domain $\hat \B$ as
\[
\hat \B = \{ (x,y) \in \CC/\ZZ \times \CC \,:\, 
|{\im\,x}|\leq \hat r_1 \, , \, |y|\leq \hat r_2 \},
\]
where
\[
r_1 = d_{\B}+\rho+\snorm{K^x_p}_{F,\rho}, \qquad
r_2 = d_{\B}+\snorm{K^y_p}_{F,\rho}, \qquad
\hat r_1= \hat\rho+\snorm{K^x_{p}}_{F,\hat \rho}, \qquad
\hat r_2= \snorm{K^y_{p}}_{F,\hat \rho}.
\]
Then, global estimates of the symplectic map \eqref{eq:standard:map} in these domains are characterized as follows
\begin{align*}
& \snorm{\Dif F}_\B \leq  \sub{c}{\Dif F} := 2 + \eps \cosh(2\pi r_1), \\
& \snorm{\Dif^2 F}_\B \leq  \sub{c}{\Dif^2 F} := 2\pi \eps \cosh(2\pi r_1), \\
& \snorm{F_p}_{\hat \B} \leq  \sub{\hat c}{F_p} := \hat r_2 + \frac{\eps}{2\pi} \cosh(2\pi \hat r_1).
\end{align*}

Let us characterize the constants that appear in Step 2 
of Algorithm~\ref{algo:validation}. Since $N_0$ is constant, 
we take
$\sub{c}{N_0}=1$,
$\sub{c}{N_0^\top}=1$,
$\sub{\hat c}{N_0}=1$, and
$\sub{\hat c}{N_0^\top}=1$.
Moreover, we observe that
$N_0^\top (\Omega \circ K) N_0$
vanishes identically, so we take
$\sub{c}{N_0^\top (\Omega \circ K) N_0}=0$
(see Remark~\ref{rem:details:improvements}).

In Table~\ref{tab:standard} we show the fundamental information in the
computer assisted proof for the existence of the golden invariant curve for
different values of $\eps$.  The second column is the number of points $\NF$ in
the regular grid where the sampling $\{K_{p,j}\}$ is defined.  As it
is mentioned in the introduction of this section, this sampling is
obtained via the parameterization method
asking for a tolerance of $10^{-33}$ in the error of invariance 
(using the norm $\snorm{E}_{F,0}$).
For each
value of $\eps$ and the corresponding sampling, we take the
values $\rho$, $\delta$, $\sigma$, $d_\B$ and $\hat\rho$ that are given in
columns $3$ to $7$.  These 
values have been obtained using the
heuristic methodology described in Appendix~\ref{app:optimization}. In all
computations we take $a_2=1000$ so that $\rho_\infty = \rho/1000$.  
We use the specific Diophantine constants
$\gamma=\tfrac{3-\sqrt{5}}{2}$ and $\tau=1$ of the golden mean.
After applying the rigorous computations described in
Algorithm~\ref{algo:validation}, 
using 267 bits,
in the last two
columns we provide the values
of the left-hand side of the KAM condition and the bound of the correction of
the true invariant tori, which are given by
\[
\frac{\mathfrak{C}_1 \sub{b}{E}}{\gamma^4 \rho^{4 \tau}}, \qquad 
\snorm{K_{\infty}-K}_{\rho_\infty} < \frac{\mathfrak{C}_2 \sub{b}{E}}{\gamma^2 \rho^{2 \tau}},
\]
respectively.
It is worth mentioning that the results presented in
Table~\ref{tab:standard} are non-perturbative, in the sense that the computations are performed
independently of each value of $\epsilon$, without using any information related to smaller
values of the parameter.
The computational time of the CAP for the case $\eps=0.96$ is 117 seconds
in a single processor Intel(R) Xeon(R) CPU at 2.40 GHz.

\begin{table}[!t]
\centering
{\scriptsize
\begin{tabular}{|c| c c c c c c || c c|}
\hline
$\epsilon$ &
$\NF$ &
$\rho$ &
$\delta$ &
$\sigma-1$ &
$d_{\B}$ &
$\hat \rho$ &
$\frac{\mathfrak{C}_1 \sub{b}{E}}{\gamma^4 \rho^{4 \tau}}$ &
$\frac{\mathfrak{C}_2 \sub{b}{E}}{\gamma^2 \rho^{2 \tau} }$ \\ 
\hline \hline
0.06 & 128 & 1.606160e-02 & 3.212319e-03 & 1.670325e-01 & 5.064098e-06 & 2.569855e-01 & 1.35e-28 & 9.47e-34\\
0.16 & 256 & 1.369960e-02 & 2.739919e-03 & 9.673976e-02 & 2.937365e-06 & 1.369960e-01 & 9.24e-28 & 3.77e-33\\
0.26 & 256 & 1.369960e-02 & 2.739919e-03 & 6.974093e-02 & 2.044422e-06 & 1.301462e-01 & 1.74e-26 & 4.94e-32\\
0.36 & 512 & 1.369960e-02 & 2.739919e-03 & 5.229422e-02 & 1.400906e-06 & 7.534778e-02 & 4.24e-25 & 8.26e-31\\
0.46 & 512 & 1.369960e-02 & 2.739919e-03 & 3.941981e-02 & 9.278480e-07 & 7.534778e-02 & 1.76e-23 & 2.27e-29\\
0.56 & 512 & 4.520867e-03 & 8.908112e-04 & 1.268703e-02 & 9.401294e-08 & 6.329214e-02 & 9.39e-24 & 1.24e-30\\
0.66 & 1024 & 3.300233e-03 & 5.973272e-04 & 1.047736e-02 & 4.061043e-08 & 3.300233e-02 & 1.88e-23 & 1.11e-30\\
0.76 & 1024 & 2.310163e-03 & 4.017675e-04 & 5.924431e-03 & 1.166394e-08 & 3.003212e-02 & 2.32e-18 & 3.98e-26\\
0.86 & 2048 & 1.178183e-03 & 1.996921e-04 & 1.921375e-03 & 1.234843e-09 & 1.531638e-02 & 1.74e-17 & 3.19e-26\\
0.96 & 32768 & 1.178183e-04 & 1.971855e-05 & 3.648874e-05 & 5.996316e-13 & 1.060365e-03 & 2.34e-12 & 2.09e-24\\
\hline
\end{tabular}
\caption{ {\footnotesize Application of Theorem~\ref{theo:KAM} using
Algorithm~\ref{algo:validation} for the golden invariant curve
of the standard map~\eqref{eq:standard:map}
for different
values of $\eps$.
We use $\gamma=\tfrac{3-\sqrt{5}}{2}$, $\tau=1$ and the \emph{ad hoc} R\"ussmann estimates in~\eqref{eq:russmann:mejor}.
The result (last two columns) is given with 2 significant digits.
}} \label{tab:standard} }
\end{table}

The application of the KAM theorem becomes
computationally more demanding as we approach the critical value $\eps_c$.
Indeed, from $\eps=0.02$ to $\eps=0.96$ the KAM condition has worsened 
by 16 orders
of magnitude. From this point, the number of Fourier coefficients
required to apply the theorem increase dramatically (exponentially).  In this
situation, the R\"ussmann estimates proposed in
Section~\ref{ssec:superrussmann} play a significant role in improving the
applicability of the KAM theorem. For example, if we repeat the
CAP for $\eps = 0.96$
using the classical R\"ussmann estimates in~\eqref{eq:classic:russmann} 
we obtain
\[
\frac{\mathfrak{C}_1 \sub{b}{E}}{\gamma^4 \rho^{4 \tau}} \leq 5.42 \cdot 10^{-6}, \qquad 
\frac{\mathfrak{C}_2 \sub{b}{E}}{\gamma^2 \rho^{2 \tau}} \leq 1.71 \cdot 10^{-21}.
\]
In this case, expression $\mathfrak{C}_1 \sub{b}{E} \gamma^{-4}\rho^{-4 \tau}$ is 6 orders of
greater than the value obtained using the \emph{ad hoc} estimes in~\eqref{eq:russmann:mejor}.
The difference increases when we approach to the critical value. 
The last value of $\eps$ for which we have applied the KAM theorem is
the following:

\begin{theorem}\label{theo:golden}
For $\eps=0.9716$ the standard map has
a rotational invariant curve with golden rotation number.
\end{theorem}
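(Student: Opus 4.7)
The plan is to apply Algorithm~\ref{algo:validation} from Section~\ref{sec:validation:algorithm} to the specific parameter value $\eps = 0.9716$, following the exact same methodology that produced the entries of Table~\ref{tab:standard}. The essential ingredients are already in place: the exact symplectic structure is the standard one on $\TT\times\RR$ (giving $\sub{c}{\Omega}=1$, $\sub{c}{\Dif\Omega}=0$, $\sub{c}{\Dif a}=1$, $\sub{c}{\Dif^2 a}=0$), the transversal vector can be taken to be the constant $N_0 = (0,1)^\top$ (so that $\sub{c}{N_0^\top(\Omega\circ K) N_0} = 0$, simplifying all the $n=1$ constants as in Remark~\ref{rem:details:improvements}), and the Diophantine exponents for the golden rotation are $\gamma = (3-\sqrt{5})/2$, $\tau = 1$, which do not require invoking Proposition~\ref{cor:Dioph}.

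First, I would compute a high-accuracy numerical approximation of the invariant curve via the parameterization method of~\cite{HaroCFLM}, producing a sampling $\{K_{p,j}\}$ on a very fine grid; extrapolating from Table~\ref{tab:standard}, the number of Fourier modes $\NF$ needed at $\eps = 0.9716$ is substantially larger than the $32768$ used at $\eps = 0.96$, so $\NF$ will be on the order of $2^{20}$ or more, and the target tolerance for $\snorm{E}_{F,0}$ must be pushed well below $10^{-30}$. Next, I would heuristically tune the KAM parameters $\rho$, $\delta$, $\sigma$, $d_\B$, $\hat\rho$ following Appendix~\ref{app:optimization}, keeping in mind that $\rho$ must shrink in accordance with the decreasing analyticity strip of the true invariant curve; the value of $\rho_\infty$ will be taken as $\rho/1000$ as in the table.

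With these inputs fixed, I would execute Steps~0--4 of Algorithm~\ref{algo:validation} in high-precision interval arithmetic (at least the 267 bits used for Table~\ref{tab:standard}, likely higher), deriving from the explicit formulas of Section~\ref{ssec:standard} the global bounds $\sub{c}{\Dif F} = 2 + \eps\cosh(2\pi r_1)$, $\sub{c}{\Dif^2 F} = 2\pi\eps\cosh(2\pi r_1)$, $\sub{\hat c}{F_p} = \hat r_2 + (\eps/2\pi)\cosh(2\pi\hat r_1)$. The output is the pair of constants $\mathfrak{C}_1$ and $\sub{b}{E}$, and the theorem is proved as soon as
\[
\frac{\mathfrak{C}_1\, \sub{b}{E}}{\gamma^4 \rho^{4\tau}} < 1.
\]
Crucially, the sharpened R\"ussmann constant from Lemma~\ref{lem-Russmann} (Equation~\eqref{eq:russmann:mejor}) must be used rather than the classical estimate~\eqref{eq:classic:russmann}: as already observed for $\eps = 0.96$, the classical bound loses roughly six orders of magnitude, which would be fatal this close to breakdown.

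The main obstacle is the tension between smallness of $\rho$ and smallness of $\snorm{E}_\rho$. As $\eps \to \eps_c \simeq 0.97163540324$, the analyticity strip collapses, forcing $\rho$ to be tiny, and the factor $\gamma^{-4}\rho^{-4\tau}$ in the KAM condition explodes; compensating this requires $\sub{b}{E}$ to be correspondingly minuscule, which in turn forces an explosion in $\NF$ and in the working precision. Controlling the propagation of rounding errors through the interval-arithmetic chain of operations in Steps~1--3 (in particular the FFT-based evaluation of $E$, $B$, and $T$, and the accompanying discretization errors controlled by $C_\NF(\rho,\hat\rho)$ from Theorem~\ref{ADFT}) is the real computational difficulty: the enclosures must remain tight enough that $\sub{b}{E}$ does not become dominated by discretization and rounding rather than by the genuine invariance defect. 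Once a numerical parameterization of sufficient quality and a compatible choice of $(\rho,\delta,\sigma,d_\B,\hat\rho,\NF)$ are produced so that the validated inequality above holds, Theorem~\ref{theo:KAM} delivers the existence of the true golden invariant curve at $\eps = 0.9716$.
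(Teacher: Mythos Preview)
Your proposal is correct and follows exactly the paper's approach: apply Algorithm~\ref{algo:validation} with the standard-map-specific setup of Section~\ref{ssec:standard}, the exact golden Diophantine constants $\gamma=(3-\sqrt 5)/2$, $\tau=1$, and the sharpened R\"ussmann estimate~\eqref{eq:russmann:mejor}. The paper's proof simply reports the outcome of that computation: $\NF=8388608=2^{23}$, tolerance $\snorm{E}_{F,0}\le 2.74\cdot10^{-41}$, precision $367$ bits, and the specific tuned values $\rho\approx 3.75\cdot10^{-7}$, $\delta\approx 6.27\cdot10^{-8}$, $\sigma-1\approx 1.61\cdot10^{-9}$, $d_\B\approx 3.16\cdot10^{-21}$, $\hat\rho\approx 4.87\cdot10^{-6}$, yielding $\mathfrak{C}_1\sub{b}{E}/(\gamma^4\rho^{4\tau})\le 0.0823$ --- your quantitative guesses (grid size $\gtrsim 2^{20}$, precision $>267$ bits, tolerance $\ll 10^{-30}$) were all in the right direction but slightly undershoot what is actually required this close to breakdown.
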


\begin{proof}
We consider a parameterization $K$
obtained using the parameterization method with $\NF=8388608$ Fourier coefficients
(we show the significant ones in Figure~\ref{fig:golden}).
This parameterization
satisfies $\snorm{E}_{F,0} \leq 2.74 \cdot 10^{-41}$.
Again, we
take 
the Diophantine constants
$\gamma=\tfrac{3-\sqrt{5}}{2}$ and $\tau=1$,
and we use
the improved R\"ussmann constant in~\eqref{eq:russmann:mejor}.
Setting the parameters
\begin{align*}
\rho = {} & 3.748290 \cdot 10^{-7}, \\
\delta={} & 6.273289 \cdot 10^{-8}, \\
\sigma={} &1+1.610158 \cdot 10^{-9}, \\
d_\B={}   &3.159428 \cdot 10^{-21}, \\
\hat\rho={} &4.872777 \cdot 10^{-6},
\end{align*}
and applying Algorithm~\ref{algo:validation} with precision of 367 bits
we obtain the following rigorous bound:
\[
\frac{\mathfrak{C}_1 \sub{b}{E}}{\gamma^4 \rho^{4 \tau}} \leq 0.0823.
\]
This allows us to apply the KAM theorem. Moreover, the
golden curve satisfies
\[
\snorm{K_\infty-K}_{\rho_\infty} < \frac{\mathfrak{C}_2 \sub{b}{E}}{\gamma^2 \rho^{2 \tau}} \leq 3.89 \cdot 10^{-22}.
\]
The computational time of this CAP is 11404 seconds
in a single processor Intel(R) Core(R) CPU at 3.50 GHz. It required the use of almost 32 GB of RAM.
\end{proof}

\begin{figure}[!t]
\centering
\includegraphics[scale=0.35]{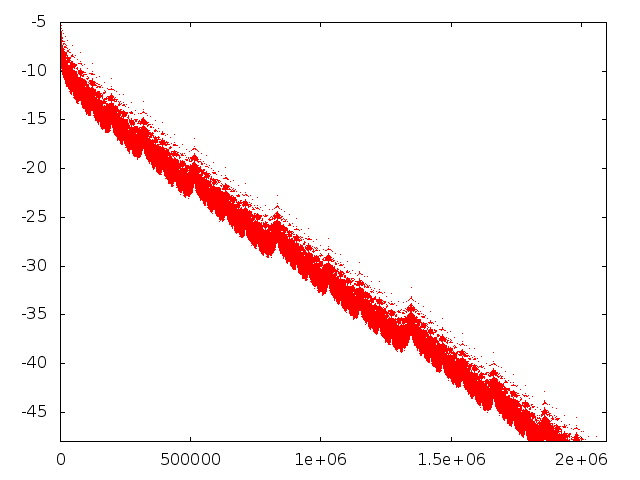}
\includegraphics[scale=0.35]{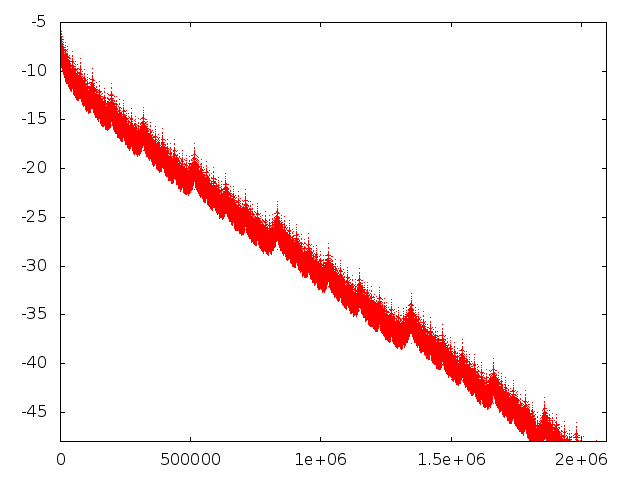}
\caption{{\footnotesize
Fourier coefficients (for positive $k$) of the
validated parameterization of the golden invariant curve 
of the standard map~\eqref{eq:standard:map}
for $\eps=0.9716$.
In the left plot we show $k \mapsto \log_{10}(|K_{p,k}^x|)$ and
in the right plot we show $k \mapsto \log_{10}(|K_{p,k}^y|)$.}} \label{fig:golden}
\end{figure}

Now we illustrate the methodology proposed in Section~\ref{ssec:diophantine}
to obtain a pair of constants $(\gamma,\tau)$ for a given interval vector.
For example, we consider the interval $\iomega$ obtained by
computing $\tfrac{\sqrt{5}-1}{2}$ using MPFI with $267$ bits and taking $M=1000$.
In Table~\ref{tab:standard2} we repeat the CAPs given
in Table~\ref{tab:standard} using the obtained constants
$\gamma=0.38196601125010$ and $\tau=1.26$. Using the argument
in Remark~\ref{rem:pbound} we obtain a rigorous upper bound
of the relative measure of the set of frequencies in $\iomega$ for which the KAM theorem does not apply.

\begin{table}[!t]
\centering
{\scriptsize
\begin{tabular}{|c|| c c||c|}
\hline
$\epsilon$ &
$\frac{\mathfrak{C}_1 \sub{b}{E}}{\gamma^4 \rho^{4 \tau}}$ &
$\frac{\mathfrak{C}_2 \sub{b}{E}}{\gamma^2 \rho^{2 \tau} }$ & $1-p(\iomega,\gamma_0,\tau)$\\ 
\hline \hline
0.06 & 3.50e-28 & 7.56e-34 & 1.33e-07\\
0.16 & 2.40e-27 & 3.01e-33 & 2.16e-07\\
0.26 & 4.51e-26 & 3.94e-32 & 4.50e-07\\
0.36 & 1.10e-24 & 6.60e-31 & 9.99e-07\\
0.46 & 4.56e-23 & 1.81e-29 & 2.53e-06\\
0.56 & 2.39e-23 & 9.83e-31 & 2.16e-06\\
0.66 & 4.25e-23 & 8.64e-31 & 2.49e-06\\
0.76 & 5.00e-18 & 3.08e-26 & 4.61e-05\\
0.86 & 3.64e-17 & 2.46e-26 & 7.58e-05\\
0.96 & 4.84e-12 & 1.61e-24 & 1.45e-03\\
\hline
\end{tabular}
\caption{ {\footnotesize Application of Theorem~\ref{theo:KAM} using
Algorithm~\ref{algo:validation} around the golden invariant curve
of the standard map~\eqref{eq:standard:map}
for different
values of $\eps$.
We use the same implementation parameters as in Table~\ref{tab:standard}.
We take the constants $\gamma=0.38196601125010$, $\tau=1.26$ and
use the \emph{ad hoc} R\"ussmann estimates in~\eqref{eq:russmann:mejor}.
A rigorous upper bound of the relative measure of the set of frequencies in $\iomega$
for which the KAM theorem does not apply (we use expression~\eqref{eq:pbound}).
The results are given with 2 significant digits.
}} \label{tab:standard2} }
\end{table}

\begin{table}[!t]
\centering
{\scriptsize
\begin{tabular}{|c c| c || c c c c c || c c|}
\hline
$a$ & $b$ &
$\epsilon$ &
$\rho$ &
$\delta$ &
$\sigma-1$ &
$d_{\B}$ &
$\hat \rho$ &
$\frac{\mathfrak{C}_1 \sub{b}{E}}{\gamma^4 \rho^{4 \tau}}  $ &
$\frac{\mathfrak{C}_2 \sub{b}{E}}{\gamma^2 \rho^{2 \tau}} $ \\ 
\hline \hline
1 & 2 & 0.87 & 9.226561e-05 & 1.544194e-05 & 5.101229e-06 & 4.377668e-14 & 9.687889e-04 & 1.71e-07 & 9.11e-21\\
1 & 3 & 0.76 & 1.919125e-04 & 3.211924e-05 & 1.149569e-05 & 1.945721e-13 & 2.111037e-03 & 1.57e-11 & 3.85e-24\\
1 & 4 & 0.67 & 2.140562e-04 & 3.582531e-05 & 7.808704e-06 & 1.020386e-13 & 2.140562e-03 & 7.08e-11 & 9.33e-24\\
1 & 5 & 0.60 & 3.321562e-04 & 5.559099e-05 & 5.647400e-06 & 6.322830e-14 & 2.159015e-03 & 3.38e-08 & 2.75e-21\\
1 & 6 & 0.54 & 1.992937e-04 & 3.335460e-05 & 2.297353e-06 & 1.249607e-14 & 2.092584e-03 & 7.49e-09 & 1.20e-22\\
2 & 1 & 0.93 & 1.845312e-04 & 3.088388e-05 & 1.340147e-05 & 3.284838e-13 & 1.107187e-03 & 2.90e-10 & 1.17e-22\\
2 & 2 & 0.95 & 1.033375e-04 & 1.729498e-05 & 7.353570e-06 & 1.004165e-13 & 1.033375e-03 & 1.70e-10 & 2.14e-23\\
2 & 3 & 0.91 & 1.919125e-04 & 3.211924e-05 & 1.297609e-05 & 3.508463e-13 & 1.151475e-03 & 4.39e-10 & 1.97e-22\\
2 & 4 & 0.86 & 1.771500e-04 & 2.964853e-05 & 1.090351e-05 & 2.725768e-13 & 2.037225e-03 & 6.87e-11 & 2.42e-23\\
2 & 5 & 0.82 & 1.107187e-04 & 1.853033e-05 & 2.959595e-06 & 2.577350e-14 & 1.051828e-03 & 1.85e-09 & 6.21e-23\\
2 & 6 & 0.78 & 9.595623e-05 & 1.605962e-05 & 1.243173e-06 & 6.667071e-15 & 1.055519e-03 & 2.51e-08 & 2.19e-22\\
3 & 1 & 0.83 & 2.066750e-04 & 3.458995e-05 & 1.428538e-05 & 3.464827e-13 & 2.066750e-03 & 6.95e-12 & 3.02e-24\\
3 & 2 & 0.89 & 1.144094e-04 & 1.914801e-05 & 8.411203e-06 & 1.146242e-13 & 1.029684e-03 & 1.45e-10 & 2.05e-23\\
3 & 3 & 0.88 & 1.771500e-04 & 2.964853e-05 & 1.476158e-05 & 3.809012e-13 & 1.151475e-03 & 3.09e-11 & 1.47e-23\\
3 & 4 & 0.86 & 1.144094e-04 & 1.914801e-05 & 5.127246e-06 & 6.122894e-14 & 1.029684e-03 & 4.37e-10 & 3.43e-23\\
3 & 5 & 0.83 & 1.771500e-04 & 2.964853e-05 & 1.026994e-05 & 1.831959e-13 & 1.860075e-03 & 1.66e-07 & 3.95e-20\\
3 & 6 & 0.80 & 1.992937e-04 & 3.335460e-05 & 1.067795e-05 & 2.109941e-13 & 2.092584e-03 & 1.14e-11 & 3.14e-24\\
4 & 1 & 0.74 & 1.771500e-04 & 2.964853e-05 & 4.904504e-06 & 4.722540e-14 & 1.948650e-03 & 4.97e-07 & 2.88e-20\\
4 & 2 & 0.80 & 1.771500e-04 & 2.964853e-05 & 1.002141e-05 & 1.894322e-13 & 1.860075e-03 & 2.32e-07 & 5.61e-20\\
4 & 3 & 0.81 & 9.226561e-05 & 1.544194e-05 & 4.429583e-06 & 3.382568e-14 & 1.014922e-03 & 1.74e-09 & 7.33e-23\\
4 & 4 & 0.79 & 2.140562e-04 & 3.582531e-05 & 1.653331e-05 & 4.949205e-13 & 2.140562e-03 & 2.19e-12 & 1.37e-24\\
4 & 5 & 0.78 & 1.291718e-04 & 2.161872e-05 & 5.383696e-06 & 5.697885e-14 & 1.097961e-03 & 5.25e-10 & 3.79e-23\\
4 & 6 & 0.76 & 1.771500e-04 & 2.964853e-05 & 6.388444e-06 & 9.570476e-14 & 1.948650e-03 & 6.39e-08 & 7.93e-21\\
5 & 1 & 0.66 & 1.328625e-04 & 2.223640e-05 & 2.056577e-06 & 8.775943e-15 & 1.062900e-03 & 2.64e-08 & 3.03e-22\\
5 & 2 & 0.72 & 1.144094e-04 & 1.914801e-05 & 3.830449e-06 & 2.609694e-14 & 1.029684e-03 & 1.88e-09 & 6.41e-23\\
5 & 3 & 0.73 & 1.033375e-04 & 1.729498e-05 & 4.269049e-06 & 3.062480e-14 & 1.033375e-03 & 2.50e-09 & 9.36e-23\\
5 & 4 & 0.72 & 2.066750e-04 & 3.458995e-05 & 1.094161e-05 & 2.103895e-13 & 2.066750e-03 & 8.81e-11 & 2.40e-23\\
5 & 5 & 0.71 & 1.919125e-04 & 3.211924e-05 & 8.565614e-06 & 1.320616e-13 & 1.919125e-03 & 1.51e-08 & 2.52e-21\\
5 & 6 & 0.70 & 1.439343e-04 & 2.408943e-05 & 3.888326e-06 & 3.451748e-14 & 1.079508e-03 & 1.38e-09 & 5.90e-23\\
6 & 1 & 0.59 & 1.771500e-04 & 2.964853e-05 & 2.193874e-06 & 1.019455e-14 & 2.037225e-03 & 2.24e-08 & 2.75e-22\\
6 & 2 & 0.65 & 8.488436e-05 & 1.420659e-05 & 1.660222e-06 & 4.346395e-15 & 9.337279e-04 & 1.24e-04 & 7.09e-19\\
6 & 3 & 0.66 & 8.857498e-05 & 1.482426e-05 & 1.695908e-06 & 6.855800e-15 & 9.743248e-04 & 2.46e-06 & 2.03e-20\\
6 & 4 & 0.65 & 2.140562e-04 & 3.582531e-05 & 1.192707e-05 & 2.027028e-13 & 2.140562e-03 & 2.16e-11 & 5.59e-24\\
6 & 5 & 0.65 & 9.226561e-05 & 1.544194e-05 & 1.994275e-06 & 7.521929e-15 & 1.014922e-03 & 3.06e-08 & 2.97e-22\\
6 & 6 & 0.64 & 1.181000e-04 & 1.976569e-05 & 2.045468e-06 & 1.035221e-14 & 1.062900e-03 & 1.33e-08 & 1.76e-22\\
\hline
\end{tabular}
}
\caption{ \label{tab:standard:quadratics}
Application of Theorem~\ref{theo:KAM} using Algorithm~\ref{algo:validation} for the standard map~\eqref{eq:standard:map}.
We consider several interval frequencies $\iomega$ enclosing intervals given
by Equation~\eqref{eq:omega:a,b}. For each curve, we present the larger value of $\eps$ (second column) for which
we can apply the KAM theorem with {\em ad hoc} R\"ussmann estimates~\eqref{eq:russmann:mejor} using a grid of size $\NF=32768$. 
}
\end{table}

In Table~\ref{tab:standard:quadratics} we present the application of the KAM
theorem for other invariant curves. Specifically, we consider the 
rotation numbers that have been characterized in Table~\ref{tab:apenA:1d}. We
fix 
$\NF=32768$ and we show the maximum number of $\eps$
for which we have been able to apply the KAM
theorem
(taking jumps of length $0.01$ in $\eps$).
Computations are performed using interval arithmetics with 267 bits.
We perform the CAP using both the
classic R\"ussmann estimates in~\eqref{eq:classic:russmann}
and the \emph{ad hoc} estimates in~\eqref{eq:russmann:mejor}.
Numerical approximations of the critical values of
some of these curves have been reported in~\cite{FoxM14}, for example,
$\eps_c \simeq 0.957447$ for $\omega_{2,2}$ (we prove existence for
$\eps=0.95$), $\eps_c \simeq 0.87608$ for $\omega_{1,2}$ (we prove existence
for $\eps=0.87$), $\eps_c \simeq 0.89086$ fur $\omega_{3,3}$ (we prove
existence for $\eps=0.88$), and $\eps_c \simeq 0.77242$ for $\omega_{1,3}$ (we
prove existence for $0.76$). As in Theorem~\ref{theo:golden}, we can obtain sharper rigorous lower
bounds of these critical values by increasing the number of Fourier
coefficients.
We remark again that the use of \emph{ad hoc}
R\"ussmann estimates represents a significant advantage in order to apply the
KAM theorem. In some cases we improve up to $17$ orders of magnitude the size
of the smallness condition.
The computational time of these CAPs ranges between 75 and 123 seconds
in a single processor Intel(R) Xeon(R) CPU at 2.40 GHz.

\subsection{Non-twist standard map}\label{ssec:nontwist:standard}

The second application falls in the context of the so-called non-twist maps.  It
is well known that there is an analogue of KAM theory in the non-twist scenario
(see for example \cite{DelshamsL00,GonzalezHL13,Simo98a}).
The loss of the twist condition introduces different properties with respect to
the twist case, for example the fact that the Birkhoff Graph Theorem does not
apply 
and folded invariant curves are observed. 
A classic mechanism that creates 
such folded invariant curves is called
\emph{reconnection}
(see~\cite{CastilloNGM96,Simo98a}).
Reconnection is a global bifurcation of the invariant
manifolds of two or more distinct hyperbolic periodic orbits having the same
winding number.
This creates a meandering
region having folded quasi periodic curves.
Among these orbits, of special interest is the one
that corresponds to an invariant curve having a local extremum
in the rotation number, called shearless invariant curve.
In this section we are not interested in
applying singular KAM theory to study the shearless invariant curves (in the
spirit of~\cite{GonzalezHL13}) but we will consider
invariant curves inside the meandering region that are non-degenerate (in the spirit of~\cite{Simo98a}).
The study of shearless invariant curves and their bifurcations can be
performed in combination with the tools in~\cite{GonzalezHL13}.

Let us consider the non-twist standard map
\begin{equation}\label{eq:nontwist:map}
\begin{array}{rcl}
F:
\TT \times \RR & 
\longrightarrow &
\TT \times \RR \\
(x,y) & \longmapsto & (\bar x, \bar y)=\left(x + (\bar y+\lambda_1)(\bar y+\lambda_2), y - \frac{\eps}{2\pi} \sin (2\pi x)\right)
\end{array}
\end{equation}
where $(x,y)\in \TT \times \RR$ are phase space coordinates and $\lambda_1,\lambda_2 \in \RR$ and $\eps>0$ are parameters. 
Although this family is not generic (it contains just one harmonic),
it describes the essential features of non-twist
systems with a local quadratic extremum in the rotation number.

For the non-twist standard map, given by Equations~\eqref{eq:nontwist:map}, we have $\A=\TT \times \RR$,
$\aform=\aform_0= y\dif x$ and $\sform= \sform_0= \dif y \wedge \dif x$.
Hence, we take $\sub{c}{\Omega}=1$, $\sub{c}{\Dif \Omega}=0$, $\sub{c}{\Dif a}=1$ and $\sub{c}{\Dif^2 a}=0$.

Contrary to the example discussed in Section~\ref{ssec:standard}, we notice
that the tangent vectors to the invariant curves are not 
in general 
in the horizontal direction, since invariant curves are not expected to be graphs. 
Using the ambient structure, we select the transversal vectors $N_0 : \TT \rightarrow \RR^{2\times 1}$ as
\begin{equation}\label{eq:normalN0:nontwist}
N_0(\theta)= \Omega_0 \Dif K(\theta),
\end{equation}
and it is clear that $\Dif K(\theta)$ and $N_0(\theta)$ form a basis of $\RR^2$
for every $\theta \in \TT$. 

\begin{figure}[!t]
\centering
\includegraphics[scale=0.3,angle=-90]{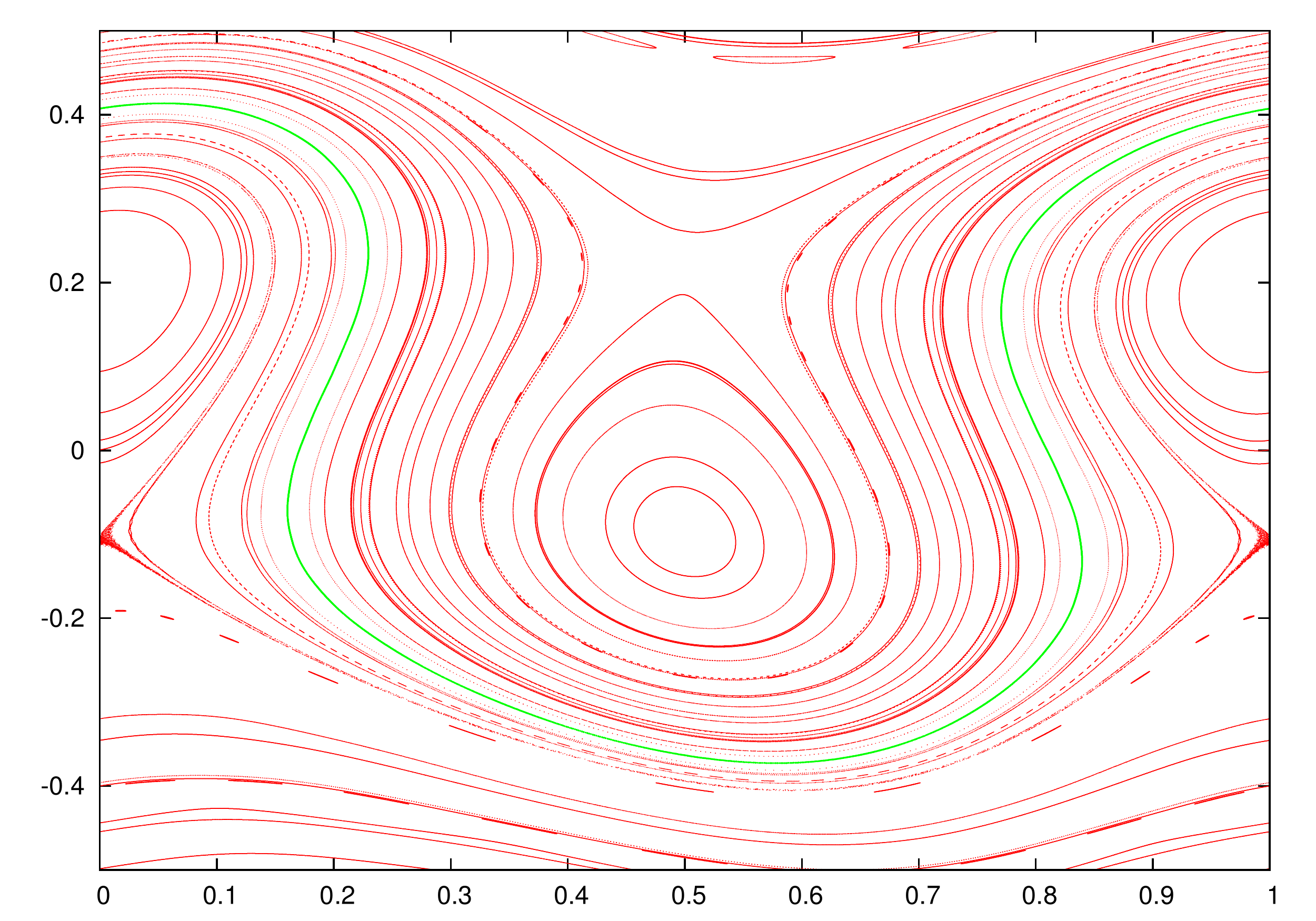}
\caption{{\footnotesize
Meandering invariant curves for the non-twist standard map~\eqref{eq:nontwist:map}
for parameters
$\lambda_1=0.1$, $\lambda_2=-0.2$ and $\eps=0.45$. 
}} \label{fig:meandering1}
\end{figure}

In Step 0 of Algorithm~\ref{algo:validation}, we introduce the domain $\B$ as
\[
\B = \{ (x,y) \in \CC/\ZZ \times \CC \,:\, 
|{\im\,x}|\leq r_1 \, , \, |y|\leq r_2 \},
\]
and the domain $\hat \B$ as
\[
\hat \B = \{ (x,y) \in \CC/\ZZ \times \CC \,:\, 
|{\im\,x}|\leq \hat r_1 \, , \, |y|\leq \hat r_2 \}.
\]
where
\[
r_1 = d_{\B}+\rho+\snorm{K^x_p}_{F,\rho}, \qquad
r_2 = d_{\B}+\snorm{K^y_p}_{F,\rho}, \qquad
\hat r_1= \hat\rho+\snorm{K^x_{p}}_{F,\hat \rho}, \qquad
\hat r_2= \snorm{K^y_{p}}_{F,\hat \rho}.
\]
Then, global estimates of the symplectic map \eqref{eq:nontwist:map} in these domains are characterized as follows
\begin{align*}
& \snorm{\Dif F}_\B \leq  \sub{c}{\Dif F} := \max \bigg\{
1+|\partial_x\bar y|,
1+(2 |\bar y|+|\lambda_1+\lambda_2|) (1+|\partial_x \bar y|),
\bigg\}, \\
& \snorm{\Dif^2 F}_\B \leq  \sub{c}{\Dif^2 F} := \max \bigg\{ 
|\partial_{xx} \bar y|,
2 + 2 |\partial_x \bar y| + 2 |\partial_x \bar y|^2 +
(2|\bar y|+|\lambda_1+\lambda_2|) |\partial_{xx}\bar y | 
\bigg\}, \\
& \snorm{F_p}_{\hat \B} \leq  \sub{\hat c}{F_p} := 
\max \bigg\{ 
\hat r_2 + \eps \cosh(2\pi \hat r_1),
(\hat r_2 + \eps \cosh(2\pi \hat r_1)+|\lambda_1|)
(\hat r_2 + \eps \cosh(2\pi \hat r_1)+|\lambda_2|)
\bigg\},
\end{align*}
where
\[
|\bar y| = r_2 + \frac{\epsilon}{2\pi} \cosh (2\pi r_1), \quad |\partial_x \bar y|= \epsilon \cosh(2\pi r_1), \quad
|\partial_{xx}\bar y| = 2\pi \epsilon \cosh(2\pi r_1).
\]

Let us characterize the constants that appear in  Step 2 of Algorithm~\ref{algo:validation}.
Using Equation~\eqref{eq:normalN0:nontwist} and the fact that $\Omega_0$ is constant, we control
the analytic norms related to $N_0$ 
using
\[
\sub{c}{N_0}=\snorm{N_0}_{F,\rho}=\snorm{\Dif K}_{F,\rho},\quad 
\sub{c}{N_0^\top}=\snorm{\Dif K^\top}_{F,\rho}, \quad
\sub{\hat c}{N_0}=\snorm{\Dif K}_{F,\hat \rho}, \quad 
\sub{\hat c}{N_0^\top}=\snorm{\Dif K^\top}_{F,\hat \rho}.
\]
Notice that 
we can evaluate
the above norms directly since $K_p$ is a trigonometric polynomial.
We also observe that $N_0^\top (\Omega \circ K) N_0$
vanishes identically, so we take $\sub{c}{N_0^\top (\Omega \circ K) N_0}=0$.

Let us describe a simple general approach to obtain candidates of invariant
curves using a global (non-perturbative) analysis of the problem. Given fixed
values of $\lambda_1$, $\lambda_2$ and $\eps$, we take an initial condition
$(x_0,y_0)$.  To this initial condition we associate a rotation number
(frequency) using the extrapolation method of~\cite{LuqueV09}. Notice that we
can define a (non-equispaced) sampling of a parameterization as follows
$K(n\omega)=(x_n,y_n)$, where $(x_n,y_n)$ is the orbit of the previous initial
condition and $\omega$ is the computed frequency. Then, we obtain a first
equispaced sampling $\{K_{p,j}\}$ by means of polynomial interpolation (as used
for example in~\cite{OlveraP08}).  If the error of invariance of this
approximation is not good enough, we apply the
parameterization method.

From now on, we fix $\lambda_1=0.1$, $\lambda_2=-0.2$ and
$\eps=0.45$.  In Figure~\ref{fig:meandering1} we show the iteration of several
initial conditions and we observe meandering (folded) invariant curves. 
We consider one of the two invariant curves having rotation number
$\omega=\tfrac{\sqrt{5}-1}{32}$, which
corresponds to the green curve in Figure~\ref{fig:meandering1}. Next
we show that,
for the above values of the parameters,
there exists a true invariant curve nearby.
To this end, we obtain an approximation of the corresponding
parameterization using $\NF=2048$ and with a numerically estimated error of
invariance of $10^{-42}$ (using the norm $\|\cdot\|_{F, 0}$). 
Computations are performed using interval arithmetics with 267 bits.
As input
of Algorithm~\ref{algo:validation} we enclose $\omega$ with
a tight interval of length $10^{-80}$, and we use the parameters
\begin{align*}
\rho={} & 1.223945 \cdot 10^{-3}, \\
\delta={} & 2.048444 \cdot 10^{-4}, \\
\sigma={} & 1+1.601973 \cdot 10^{-11}, \\
d_\B={} & 8.333835 \cdot 10^{-22}, \\
\hat \rho={} & 1.835918 \cdot 10^{-2}.
\end{align*}
We use also 
the improved R\"ussmann estimates in~\eqref{eq:russmann:mejor}. The obtained
result is
\[
\frac{\mathfrak{C}_1 \sub{b}{E}}{\gamma^4 \rho^{4 \tau}} \leq 0.0343, \qquad 
\frac{\mathfrak{C}_2 \sub{b}{E}}{\gamma^2 \rho^{2 \tau}} \leq 3.78 \cdot 10^{-23}.
\]
Then, we can ensure the existence of a meandering invariant curve
close to the green curve in Figure~\ref{fig:meandering1}.
Notice that, even though the curve is quite regular, we need a very good approximation
of the invariant curve in order to apply the KAM theorem.
The reason is that the twist condition around the curve is quite weak. Indeed,
we obtain $c_T\leq 2388.12$ and $\sigma_T \leq 33.11$ that propagate
significantly along the computation of $\mathfrak{C}_1$ and $\mathfrak{C}_2$.
Still, we are able to conclude that the distance, measure by the analytic norm, between the true invariant
curve and the initial approximation is controlled by $3.78 \cdot 10^{-23}$.
The computational time of this CAP is 65 seconds
in a single processor Intel(R) Xeon(R) CPU at 2.40 GHz.

\subsection{Froeschl\'e map}\label{ssec:froeschle}

Next we illustrate the use of Algorithm~\ref{algo:validation} to prove
existence of Lagrangian invariant tori in a higher dimensional case. We
consider the so-called Froeschl\'e map (see~\cite{Froeschle72}), which consists
in two coupled standard maps, given by
\begin{equation}\label{eq:map:froeschle}
\begin{array}{rcl}
F: 
\TT^2 \times \RR^2 & 
\longrightarrow &
\TT^2 \times \RR^2 \\
\begin{pmatrix}
x_1 \\
x_2 \\
y_1 \\
y_2
\end{pmatrix}
 &\longrightarrow &
\begin{pmatrix}
\bar x_1 \\
\bar x_2 \\
\bar y_1 \\
\bar y_2
\end{pmatrix}
=
\begin{pmatrix}
x_1+\bar y_1 \\
x_2+\bar y_2 \\
y_1 - \frac{\lambda_1}{2\pi} \sin (2\pi x_1) - \frac{\eps}{2\pi} \sin (2\pi(x_1+x_2)) \\
y_2 - \frac{\lambda_2}{2\pi} \sin (2\pi x_2) - \frac{\eps}{2\pi} \sin (2\pi(x_1+x_2))
\end{pmatrix},
\end{array}
\end{equation}
where $(x,y)\in \TT^2 \times \RR^2$ are phase space coordinates and $\lambda_1,\lambda_2,\eps$ are parameters. 
This family has been extensively studied in the literature as a model to understand instability channels and
the destruction of invariant tori \cite{Haro98,KanekoB85,MacKayMS89,Tompaidis96}.

\begin{table}[!t]
\centering
{\scriptsize
\begin{tabular}{|c || c c c c c c || c c|}
\hline
$\epsilon$ &
$\Ni{1} \times \Ni{2}$ &
$\rho$ &
$\delta$ &
$\sigma-1$ &
$d_{\B}$ &
$\hat \rho$ &
$\frac{\mathfrak{C}_1 \sub{b}{E}}{\gamma^4 \rho^{4 \tau}}  $ &
$\frac{\mathfrak{C}_2 \sub{b}{E}}{\gamma^2 \rho^{2 \tau} } $ \\ 
\hline \hline
0.005 & 128$\times$128 & 5.037106e-02 & 8.325794e-03 & 9.971678e-09 & 1.304797e-13 & 2.518553e-01 & 1.22e-11 & 1.65e-24 \\
0.010 & 128$\times$128 & 4.149000e-02 & 6.943934e-03 & 2.991465e-09 & 2.984918e-14 & 2.489400e-01 & 8.18e-11 & 2.53e-24 \\
0.015 & 128$\times$128 & 3.381873e-02 & 5.660038e-03 & 1.007737e-09 & 7.594011e-15 & 2.198217e-01 & 3.17e-09 & 2.49e-23 \\
0.020 & 128$\times$128 & 2.640907e-02 & 4.419929e-03 & 3.038407e-10 & 1.678977e-15 & 1.980681e-01 & 1.51e-05 & 2.62e-20 \\
0.025 & 256$\times$256 & 1.480798e-02 & 2.447599e-03 & 2.798552e-09 & 8.363941e-15 & 1.184638e-01 & 4.82e-11 & 4.18e-25 \\
0.030 & 512$\times$512 & 6.403442e-03 & 1.071706e-03 & 1.635121e-10 & 2.072109e-16 & 1.088585e-01 & 3.38e-41 & 7.25e-57 \\
0.035 & 512$\times$512 & 6.339408e-03 & 1.060989e-03 & 1.271988e-10 & 1.484302e-16 & 1.014305e-01 & 8.06e-38 & 1.24e-53 \\
0.040 & 512$\times$512 & 6.339408e-03 & 1.060989e-03 & 1.042153e-10 & 1.129674e-16 & 1.014305e-01 & 1.13e-37 & 1.32e-53 \\
0.045 & 512$\times$512 & 6.339408e-03 & 1.060989e-03 & 8.603873e-11 & 8.652147e-17 & 1.077699e-01 & 9.77e-42 & 8.75e-58 \\
0.050 & 512$\times$512 & 6.339408e-03 & 1.060989e-03 & 7.143414e-11 & 6.649623e-17 & 1.077699e-01 & 1.71e-41 & 1.17e-57 \\
0.055 & 512$\times$512 & 6.339408e-03 & 1.060989e-03 & 5.955329e-11 & 5.127525e-17 & 1.077699e-01 & 2.98e-41 & 1.58e-57 \\
0.060 & 512$\times$512 & 6.339408e-03 & 1.060989e-03 & 5.050749e-11 & 4.016518e-17 & 1.077699e-01 & 1.34e-40 & 5.57e-57 \\
0.065 & 512$\times$512 & 6.339408e-03 & 1.060989e-03 & 4.239981e-11 & 3.109553e-17 & 1.014305e-01 & 1.96e-36 & 6.31e-53 \\
0.070 & 512$\times$512 & 6.339408e-03 & 1.060989e-03 & 3.566570e-11 & 2.408118e-17 & 9.509112e-02 & 7.89e-32 & 1.97e-48 \\
0.075 &1024$\times$512 & 6.085832e-03 & 1.018549e-03 & 1.128252e-14 & 6.748840e-21 & 7.911581e-02 & 1.45e-16 & 1.01e-36 \\
\hline
\end{tabular}
\caption{ {\footnotesize 
Application of Theorem~\ref{theo:KAM} using
Algorithm~\ref{algo:validation} around the cubic invariant torus 
of the Froeschl\'e map~\eqref{eq:map:froeschle}
for different
values of $\eps$. We use the R\"ussmann estimates given in~\eqref{eq:russmann:mejor}.
}}
\label{tab:froeschle} }
\end{table}

For the Froeschl\'e map, given by Equation~\eqref{eq:map:froeschle}, we have $\A=\TT^2 \times \RR^2$,
$\aform=\aform_0= y_1\dif x_1+y_2 \dif x_2$ and $\sform= \sform_0= \dif y_1 \wedge \dif x_1+\dif y_2 \wedge \dif x_2$.
Hence, we take $\sub{c}{\Omega}=1$, $\sub{c}{\Dif \Omega}=0$, $\sub{c}{\Dif a}=1$ and $\sub{c}{\Dif^2 a}=0$.

Using the ambient structure, we select the transversal vectors $N_0 : \TT \rightarrow \RR^{4\times 2}$ as
\begin{equation}\label{eq:normalN0:froeschle}
N_0(\theta)= \Omega_0 \Dif K(\theta),
\end{equation}
and it is clear that $\Dif K(\theta)$ and $N_0$ form a basis of $\RR^4$
for every $\theta \in \TT^2$. 

In Step 0 of Algorithm~\ref{algo:validation}, we introduce the domain $\B$ as
\[
\B = \{ (x,y) \in \CC^2/\ZZ^2 \times \CC^2 \,:\, 
|{\im\,x_1}|\leq r_1 \, , 
|{\im\,x_2}|\leq r_2 \, , 
\, |y_1|\leq r_3 \, ,
\, |y_2|\leq r_4 
\},
\]
and the domain $\hat \B$ as
\[
\hat \B = \{ (x,y) \in \CC^2/\ZZ^2 \times \CC^2 \,:\, 
|{\im\,x_1}|\leq \hat r_1 \, , 
|{\im\,x_2}|\leq \hat r_2 \, , 
\, |y_1|\leq \hat r_3 \, , 
\, |y_2|\leq \hat r_4 
\},
\]
where
\[
r_1 = d_{\B}+\rho+\snorm{K^{x_1}_p}_{F,\rho}, \quad
r_2 = d_{\B}+\rho+\snorm{K^{x_2}_p}_{F,\rho}, \quad
r_3 = d_{\B}+\snorm{K^{y_1}_p}_{F,\rho}, \quad
r_4 = d_{\B}+\snorm{K^{y_2}_p}_{F,\rho},
\]
and
\[
\hat r_1 = \hat \rho+\snorm{K^{x_1}_p}_{F,\hat \rho}, \quad
\hat r_2 = \hat \rho+\snorm{K^{x_2}_p}_{F,\hat \rho}, \quad
\hat r_3 = \snorm{K^{y_1}_p}_{F,\hat \rho}, \quad
\hat r_4 = \snorm{K^{y_2}_p}_{F,\hat \rho}.
\]
Then, global estimates of the the symplectic map \eqref{eq:map:froeschle} in these domains are characterized as follows
\begin{align*}
& \snorm{\Dif F}_\B \leq  \sub{c}{\Dif F} := \max \bigg\{ 2 + \lambda_1 c_1+ 2 \eps c_3, 2 + \lambda_2 c_2 + 2 \eps c_3 \bigg\}, \\
& \snorm{\Dif^2 F}_\B \leq  \sub{c}{\Dif^2 F} := \max \bigg\{ 2 \pi \lambda_1 c_1 + 4 \pi \eps c_3 , 2\pi \lambda_2 c_2 + 4 \pi \eps c_3
\bigg\}, \\
& \snorm{F_p}_{\hat \B} \leq  \sub{\hat c}{F_p} := \max \bigg\{
\hat r_3 + \frac{\lambda_1}{2\pi} \hat c_1 + \frac{\eps}{2\pi} \hat c_3,
\hat r_4 + \frac{\lambda_3}{2\pi} \hat c_3 + \frac{\eps}{2\pi} \hat c_3,
\bigg\},
\end{align*}
where
\[
c_1 = \cosh(2\pi r_1), \qquad
c_2 = \cosh(2\pi r_2), \qquad
c_3 = \cosh(2\pi (r_1+r_2)),
\]
and
\[
\hat c_1 = \cosh(2\pi \hat r_1), \qquad
\hat c_2 = \cosh(2\pi \hat r_2).
\]

In Step 2 of Algorithm~\ref{algo:validation}, we observe that 
\[
N_0(\theta)^\top \Omega_0 N_0(\theta) = \Dif K(\theta)^\top \Omega_0 \Dif K(\theta),
\]
where we used $\Omega_0^\top \Omega_0=I_4$. Using Theorem~\ref{ADFT} we
take
\[
\sub{c}{N_0^\top (\Omega \circ K) N_0}= \snorm{\Dif K^\top \Omega_0 \Dif K}_{F,\rho}+ C_{\NF}(\rho,\hat \rho)
\snorm{\Dif K^\top}_{F,\hat \rho}
\snorm{\Dif K}_{F,\hat \rho}.
\]
Moreover, using Equation~\eqref{eq:normalN0:froeschle}, we control the remaining analytic norms related to $N_0$ with
the constants
\[
\sub{c}{N_0}=\snorm{\Dif K}_{F,\rho},\quad 
\sub{c}{N_0^\top}=\snorm{\Dif K^\top}_{F,\rho}, \quad
\sub{\hat c}{N_0}=\snorm{\Dif K}_{F,\hat \rho}, \quad 
\sub{\hat c}{N_0^\top}=\snorm{\Dif K^\top}_{F,\hat \rho}.
\]

\begin{table}[!t]
\centering
{\scriptsize
\begin{tabular}{|c || c c c|}
\hline
$\Ni{1} \times \Ni{2}$ &
Precision (bits) &
Tolerance & CPU time (sec)\\
\hline \hline
128$\times$128  & 267 & 1e-33 & 246 \\
256$\times$256  & 347 & 1e-38 & 751 \\
512$\times$512  & 533 & 1e-66 & 3628\\
1024$\times$512 & 533 & 1e-66 & 6992\\
\hline
\end{tabular}
\caption{ {\footnotesize
Auxiliary implementation parameters of the CAPs corresponding to Table~\ref{tab:froeschle}.
The CAPs are performed in a single processor Intel(R) Xeon(R) CPU at 2.40 GHz.
}}
\label{tab:froeschle2}}
\end{table}

\begin{figure}[!t]
\centering
\begin{tabular}{cc}
\includegraphics[scale=0.4]{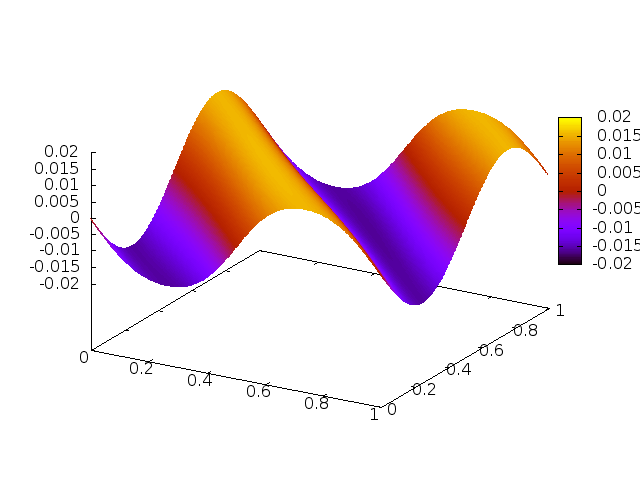}
&
\includegraphics[scale=0.4]{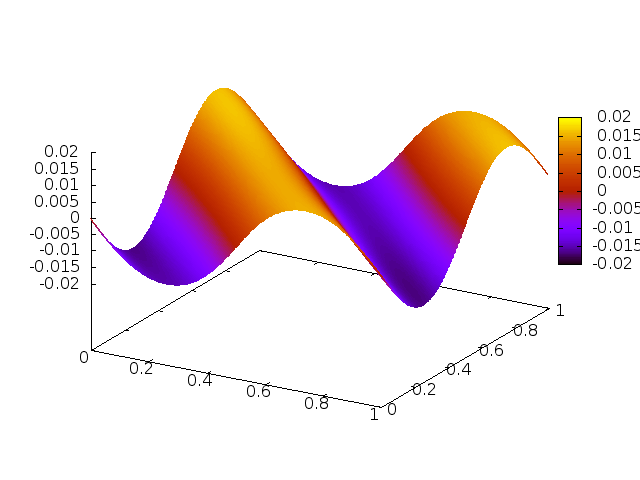}
\\
\includegraphics[scale=0.4]{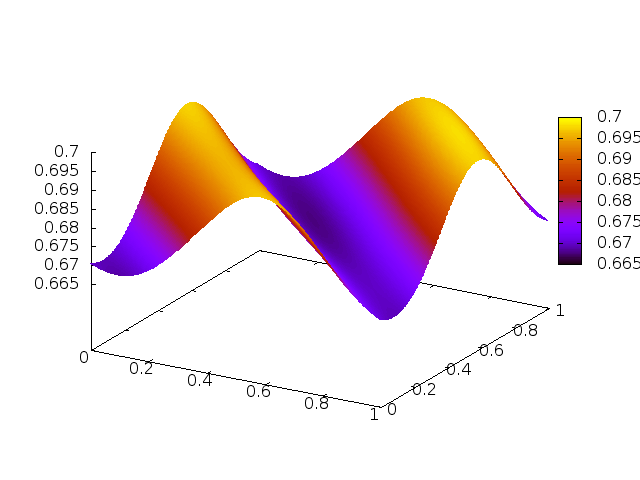}
&
\includegraphics[scale=0.4]{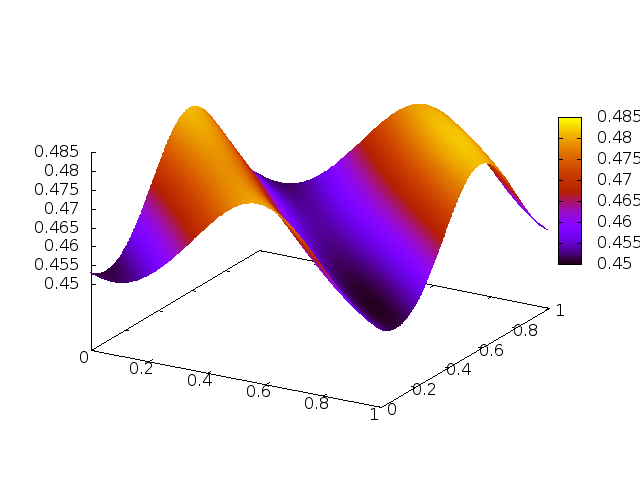}
\end{tabular}
\caption{{\footnotesize
Last validated invariant torus in Table~\ref{tab:froeschle}. 
Left-Top plot: $(\theta_1,\theta_2) \mapsto K_p^{x_1}(\theta_1,\theta_2)$.
Right-Top plot: $(\theta_1,\theta_2) \mapsto K_p^{x_2}(\theta_1,\theta_2)$.
Left-Bottom plot: $(\theta_1,\theta_2) \mapsto K_p^{y_1}(\theta_1,\theta_2)$.
Left-Bottom plot: $(\theta_1,\theta_2) \mapsto K_p^{y_2}(\theta_1,\theta_2)$.
}}\label{fig:torus2D}
\end{figure}

We consider the frequency vector $\omega=(\nu,\nu^2)$
where $\nu$ is the so-called \emph{cubic golden number} (the only real root of $x^3+x-1=0$).
We fix the parameters $\lambda_1=0.01$ and $\lambda_2=0.02$ and we validate
the invariant torus with frequency $\omega$ for different values of $\epsilon$.
Several validations are summarized in Table~\ref{tab:froeschle}
whose format is analogous as in previous examples.
In Table~\ref{tab:froeschle2} we provide some 
complementary information about the execution of the computer program
depending on the number of Fourier coefficients.
We enclose the components of $\omega$ with intervals of radius
$\mu^2$, where $\mu$ is the tolerance requested in the numerical
computation of the parameterization (see Table~\ref{tab:froeschle2}).
The last validated invariant torus, corresponding to $\eps=0.075$,
is shown in Figure~\ref{fig:torus2D}.

\section*{Acknowledgements}

The authors are very grateful to 
R. Calleja, 
R. de la Llave, 
and J. Villanueva
for useful and fruitful discussions along the last years.
We would like to acknowledge financial support from
the Spanish grant MTM2012-32541  and
the Catalan grant 2014-SGR-1145.
J.-Ll. Figueras acknowledges the partial support from Essen, L. and C.-G., for mathematical studies.
Moreover, A.L. acknowledges support from a postdoctoral
position in the ERC Starting grant 335079.
We acknowledge Albert Granados and the use of the UPC Applied Math cluster system for
research computing (see \url{http://www.pagines.ma1.upc.edu/~eixam/}).

\addcontentsline{toc}{section}{References}
\bibliographystyle{plain}
\bibliography{references}

\appendix

\section{A heuristic selection of parameters to validate invariant tori}\label{app:optimization}

In this appendix we describe a direct method to select the implementation
parameters $\rho$, $\delta$, $\sigma$, $\rho_{\infty}$, $d_{\B}$, and $\hat \rho$, required in
Input 4 of Algorithm~\ref{algo:validation} to rigorously validate an invariant torus.
The idea consists in using the structure of the constants that appear in 
Step 4 (c.f. Section~\ref{ssec:details:step4}).
We do not claim that the procedure described below is optimal,
but allows us to obtain suitable values of the parameters at a moderate computational
cost. 

We assume that the term $2(a_3)^{3\tau+1} \gamma^3 \rho^{3
\tau-1} C_3$ does not contribute to the constant $\mathfrak{C}_1$
and we look for parameters such that
the constants in~\eqref{eq:C3C4C5} satisfy $\mathfrak{C}_3=\mathfrak{C}_4=\mathfrak{C}_5$.
The neglected
term is in general very small 
(it stands for the control of the
approximate Lagrangian character of the approximated torus)
and 
this assumption allows us to
simplify the
heuristic analysis of the constants.

We proceed in analogy with the procedure described in~\cite{HaroCFLM}.
We observe that the dependence on $a_2$ is very simple: it appears
only in the expression $a_3 = 3 \tfrac{a_1}{a_1-1} \tfrac{a_2}{a_2-1}$ 
and in the final strip of analyticity $\rho_{\infty}=\rho/a_2$.
In order to look for the limit condition, we take $a_2=\infty$ (so $\rho_\infty=0$) in all
subsequent computations. 

Now we can describe a very simple algorithm to obtain suitable values of the parameters
$\rho$, $\delta$, $\sigma$, $d_\B$ and $\hat \rho$
for a given parameterization $K$ in a grid of size $\NF=(\Ni{1},\ldots,\Ni{n})$:
\begin{enumerate}
\item We take $\rho_0=- \log(\snorm{E}_{F,0})/(2 \pi N)$, where $E$ is the error of invariance
and $N=\max_{i} \{ \Ni{i}\}$. This will be the initial value of $\rho$.

\item For a given value $\rho$, we consider
values $\delta \in [\tfrac{\rho}{6.5},\tfrac{\rho}{4.5}]$ (recall that $a_3=\tfrac{\rho}{\delta}$ and $a_1= \tfrac{a_3}{a_3-3}$). For any of these
values $(\rho,\delta)$ we compute $\sigma$ and $d_\B$ solving
the equations $\mathfrak{C}_4=\mathfrak{C}_5$ and $\mathfrak{C}_3=\mathfrak{C}_4$,
which are respectively writen as follows:
\begin{align}
& \sigma_*(1-a_1^{-2\tau})
d_\B-(\sigma-1) \delta (1-a_1^{1-2\tau})=0,
\label{eq:cond:op1} \\
& \sigma_* (a_3)^{2\tau+1} \gamma^2 \rho^{2\tau-1} \hat C_2 - (\sigma-1)(1-a_1^{1-2\tau})(a_1a_3)^{4 \tau}\hat C_5 =0.
\label{eq:cond:op2}
\end{align}
Let us recall that $\sigma_*$, $\hat C_2$ and $\hat C_5$ depend on
$\rho$, $\delta$, $\sigma$, $d_\B$, and $\hat \rho$.
In order to avoid the dependence on $\hat \rho$ in the above expression,
we take
$C_\NF(\rho, \hat \rho)=0$ when computing
these constants. A suitable value of $\hat \rho$ is fixed later.
Then,  we select the value of
$\delta$ that minimizes the expression $\mathfrak{C}_1 \gamma^{-4} \rho^{-4\tau} \snorm{E}_\rho$.

\item 
If $\mathfrak{C}_1 \gamma^{-4} \rho^{-4\tau} \snorm{E}_\rho \geq 1$, 
we decrease the value of
$\rho$ and repeat step II, thus obtaining a new
value of $\mathfrak{C}_1$.
 We proceed
until we find that $\mathfrak{C}_1 \gamma^{-4} \rho^{-4\tau} \snorm{E}_\rho<1$.
If at any point we reach a minimum of the function $\mathfrak{C}_1
\gamma^{-4} \rho^{-4\tau} \snorm{E}_\rho$ then we stop the computations.
If this condition is not satisfied at the minimum,
then we need a better approximation of the invariant torus. 

\item
Assume that we have obtained values of $\rho$, $\delta$, $\sigma$, and $d_\B$
as above. Then, we take a sequence of increasing values of $\hat \rho$ (starting at a value slightly greater than $\rho$) and compute the constant $C_\NF(\rho,\hat \rho)$. Then we compute again the constant $\mathfrak{C}_1$ and the bound $\sub{b}{E}$ (see Section~\ref{ssec:details:step1}).
We select a value of $\hat \rho$ that minimizes the expression $\mathfrak{C}_1 \gamma^{-4} \rho^{-4\tau} \sub{b}{E}$.
\end{enumerate}
\end{document}